\let\orgdescriptionlabel\descriptionlabel
\renewcommand*{\descriptionlabel}[1]{%
  \let\orglabel\label
  \let\label\@gobble
  \phantomsection
  \protected@edef\@currentlabel{#1\unskip}%
  \let\label\orglabel
  \orgdescriptionlabel{(#1)}%
}
\theoremstyle{plain}
\newtheorem{theorem}{Theorem}[section]
\newtheorem{lemma}[theorem]{Lemma}
\newtheorem{proposition}[theorem]{Proposition}
\newtheorem{corollary}[theorem]{Corollary}
\newtheorem{question}[theorem]{Question}
\newtheorem{conjecture}[theorem]{Conjecture}
\theoremstyle{remark}
\newtheorem{example}[theorem]{Example}
\newtheorem{remark}[theorem]{Remark}
\newcommand{\defn}[1]{{\color{green!50!black}\emph{#1}}}
\newcommand{\ie}{\text{i.e.}\;}
\newcommand{\defs}{\overset{\mathsf{def}}{=}}
\newcommand{\bubcov}{\lessdot_{\mathsf{bub}}}
\newcommand{\bubless}{<_{\mathsf{bub}}}
\newcommand{\bubleq}{\leq_{\mathsf{bub}}}
\newcommand{\indel}{\rightarrow}
\newcommand{\shufleq}{\leq_{\mathsf{shuf}}}
\newcommand{\compleq}{\leq_{\mathsf{comp}}}
\newcommand{\compcov}{\lessdot_{\mathsf{comp}}}
\newcommand{\sindel}{\hookrightarrow}
\newcommand{\transpose}{\Rightarrow}
\newcommand{\Poset}{\mathbf{P}}
\newcommand{\Lattice}{\mathbf{L}}
\newcommand{\MI}{\mathsf{M}}
\newcommand{\JI}{\mathsf{J}}
\newcommand{\Galois}{\mathsf{Galois}}	%Galois graph
\newcommand{\Covers}{\mathsf{Edge}}
\newcommand{\jsdlabeling}{\lambda_{\mathsf{jsd}}}
\newcommand{\canset}{\mathsf{Can}}
\newcommand{\invset}{\mathsf{Inv}}
\newcommand{\Shuf}{\mathsf{Shuf}}
\newcommand{\Tri}{\mathsf{Tri}}
\newcommand{\ShufPoset}{\textbf{\textsf{Shuf}}}
\newcommand{\Bub}{\textbf{\textsf{Bub}}}
\newcommand{\Hoch}{\textbf{\textsf{Hoch}}}
\def\abf{\mathbf{a}}\def\bbf{\mathbf{b}}\def\ubf{\mathbf{u}}\def\vbf{\mathbf{v}}\def\wbf{\mathbf{w}}\def\xbf{\mathbf{x}}\def\ybf{\mathbf{y}}
\def\Tcal{\mathcal{T}}
\def\rd{\textcolor{red}}
\def\bl{\textcolor{blue}}
\title{Bubble Lattices I: Structure}
\author[T.~McConville]{Thomas McConville}
\address[T.~McConville]{Kennesaw State University, Department of Mathematics, 30144 Kennesaw (GA), USA}
\email{tmcconvi@kennesaw.edu}
\author[H.~M\"uhle]{Henri M{\"u}hle}
\address[H.~M\"uhle]{Technische Universit{\"a}t Dresden, Institut f{\"u}r Algebra, Zellescher Weg 12--14, 01069 Dresden, Germany.}
\email{henri.muehle@tu-dresden.de}
\keywords{shuffle word, shuffle lattice, bubble lattice, extremal lattice, interval doubling}
\subjclass[2010]{06A07, 06D75, 05A05}
\begin{document}

\allowdisplaybreaks

\begin{abstract}
	C.~Greene introduced the shuffle lattice as an idealized model for DNA mutation and discovered remarkable combinatorial and enumerative properties of this structure. We attempt an explanation of these properties from a lattice-theoretic point of view.  To that end, we introduce and study an order extension of the shuffle lattice, the \emph{bubble lattice}.  We characterize the bubble lattice both locally (via certain transformations of shuffle words) and globally (using a notion of inversion set).  We then prove that the bubble lattice is extremal and constructable by interval doublings.  Lastly, we prove that our bubble lattice is a generalization of the Hochschild lattice studied earlier by Chapoton, Combe and the second author.  
\end{abstract}

\maketitle

\section{Introduction}

Motivated by an idealized model for mutations in DNA sequences, C.~Greene introduced the \emph{shuffle lattice}.  The ground set of this lattice is the set $\Shuf(m,n)$ of shuffles of order-preserving repetition-free words whose letters are taken from two disjoint, linearly ordered alphabets $X=\{x_{1},x_{2},\ldots,x_{m}\}$ and $Y=\{y_{1},y_{2},\ldots,y_{n}\}$.  The \emph{shuffle order} is determined by inserting letters of $Y$ or deleting letters of $X$ from any given \emph{shuffle word}. Therefore, the maximal chains in the resulting \emph{shuffle lattice} $\ShufPoset(m,n)$ describe the possible ways of transforming $x_1x_2\cdots x_m$ to $y_1y_2\cdots y_n$ by changing one letter at a time.

In \cite{greene:shuffle}, Greene studied this poset extensively and discovered several surprising enumerative relations among its characteristic polynomial, its zeta polynomial and its rank-generating function.  Namely, each of these invariants occurs as a specialization of the same Jacobi polynomial. Greene's enumerative results were recovered in \cite{simion_stanley:shuffle} using algebraic methods, but the presence of the Jacobi polynomials remained mysterious.  Various extensions of the shuffle lattice have been studied in \cites{hersh:shuffle,doran:shuffling}.

This article is the first of two papers studying an order extension of the shuffle order, where we additionally allow exchanges of adjacent letters from $X$ and $Y$.  We call this order extension the \emph{bubble order}, because the exchange operation is slightly reminiscient of the bubble sort algorithm.  We exhibit several interesting enumerative and structural connections between shuffle and bubble lattices using combinatorial lattice theory, which may shed some light on the enumerative relationship among the combinatorial invariants of the shuffle lattice.  More precisely, in the present article we study order- and lattice-theoretic properties of this new family of lattices.  In the second article~\cite{mcconville.muehle:bubbleII} we study certain enumerative invariants of these lattices and investigate their geometric structure.
% 
% which we call the \emph{bubble lattice}.  In the bubble lattice, we are also 
% In this article we seek to explain the enumerative relationships among the combinatorial invariants of the shuffle lattice using combinatorial lattice theory.  We introduce an alternate partial ordering on the set $\Shuf(m,n)$, which we call the \emph{bubble order}.  This is the extension of the shuffle order, where we also allow the exchange of adjacent letters from $X$ and $Y$.  
% Here, we study the order relation of this lattice and exhibit several structural properties.  In the follow-up article, we investigate the combinatorial and geometric properties of the bubble lattice.  

To our knowledge, the lattices $\Bub(m,n)$ have not appeared in this generality in the literature before.  
Figure~\ref{fig:bubble_shuffle} shows Greene's shuffle lattice $\ShufPoset(2,1)$ and our bubble lattice $\Bub(2,1)$ side by side.

\begin{figure}
	\centering
	\begin{subfigure}[t]{.45\textwidth}
		\centering
		\includegraphics[page=4,scale=1]{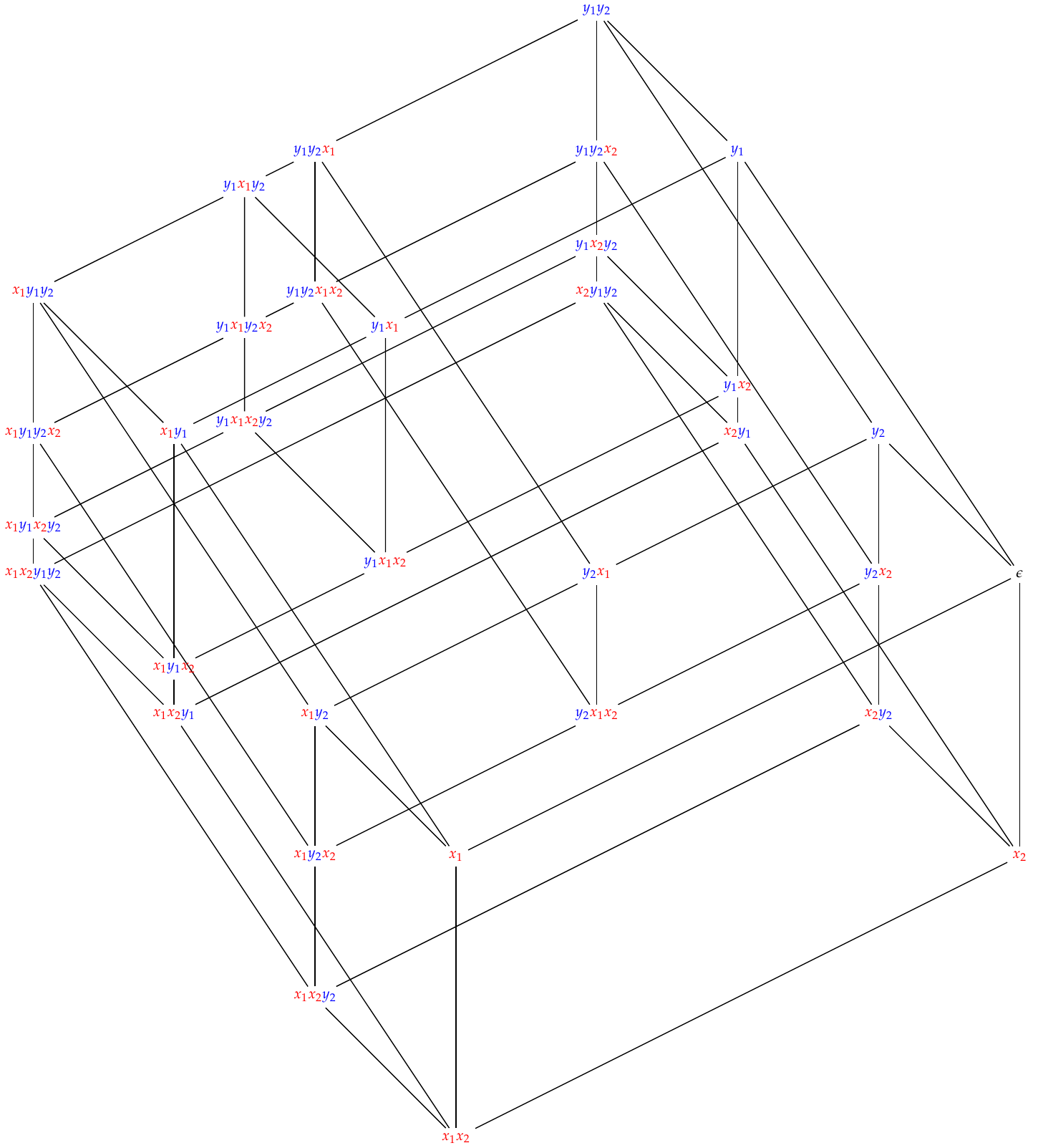}
		\caption{The shuffle poset $\ShufPoset(2,1)$.}
		\label{fig:shuffle_21}
	\end{subfigure}
	\hspace*{1cm}
	\begin{subfigure}[t]{.45\textwidth}
		\centering
		\includegraphics[page=5,scale=1]{shuffle_figures.pdf}
		\caption{The bubble poset $\Bub(2,1)$.}
		\label{fig:bubble_21}
	\end{subfigure}
	\caption{Two posets of shuffle words.}
	\label{fig:bubble_shuffle}
\end{figure}

% Let us be a bit more precise.  Fix a shuffle word $\ubf\in\Shuf(m,n)$, \ie a word using (non-duplicate) letters from the sets $X$ and $Y$ defined  above, where the letters from $X$ resp. $Y$ appear in the order determined by the subscripts.  We may now construct a new shuffle word from $\ubf$ in three possible ways.  We may (i) insert a letter of $Y$ which is not yet present in the rightmost possible spot, we may (ii) delete a letter from $X$ if it is not followed by some letter from $Y$ or otherwise we may (iii) swap it with its successor.  The directed graph on $\Shuf(m,n)$ determined by these operations is the Hasse diagram of some poset $\Bub(m,n)$; the \emph{bubble lattice}.  
Our main result states that the bubble lattice is indeed a lattice.  On top of that we prove that it enjoys several remarkable combinatorial properties which we explain in detail in Section~\ref{sec:lattices}.  The following theorem is proven over the course of Section~\ref{sec:bubble_lattice}.

\begin{theorem}\label{thm:bubble_lattice_main}
	For $m,n\geq 0$, the bubble lattice $\Bub(m,n)$ is a lattice.  Moreover, $\Bub(m,n)$ is extremal, semidistributive and constructable by interval doublings.
\end{theorem}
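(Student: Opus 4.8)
The plan is to establish the four assertions — that $\Bub(m,n)$ is a lattice, extremal, semidistributive, and bounded (constructable by interval doublings) — in a logical order that lets the later properties build on the earlier ones. First I would pin down the combinatorics of the bubble order concretely: each cover relation is either a shuffle cover (insert a $y$ or delete an $x$) or a ``bubble'' cover (transpose an adjacent $x_iy_j$ pair). The global description promised in the abstract via inversion sets is the right tool here: I would define, for each bubble word $w$, an inversion set $\invset(w)$ recording which pairs of letters (from $X\cup Y$, or the relevant mixed pairs) are ``out of their home order,'' show that $u \bubleq v$ iff $\invset(u)\subseteq\invset(v)$, and characterize exactly which subsets arise as inversion sets. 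Join and meet are then computed by intersecting/unioning inversion sets and taking the appropriate closure (the smallest, resp. largest, legal inversion set containing, resp. contained in, the result); this is the standard route to showing a poset encoded by inversion sets is a lattice, and it simultaneously gives an explicit formula for $\vee$ and $\wedge$.

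Next I would prove extremality. Recall a finite lattice of length $\ell$ is \emph{extremal} if it has exactly $\ell$ join-irreducibles and exactly $\ell$ meet-irreducibles. I would first compute the length of $\Bub(m,n)$ (a maximal chain interleaves all $n$ insertions, all $m$ deletions, and some number of transpositions — the length is $m+n$ plus the maximal number of transpositions, which should be $mn$, giving $\ell = m+n+mn$, though I'd verify this against Figure~\ref{fig:bubble_21}). Then I would identify the join-irreducibles explicitly: a join-irreducible has a unique lower cover, so these are the bubble words $w$ such that exactly one ``undo'' move (delete a $y$, re-insert an $x$, or un-transpose a pair) is available, and I would match them bijectively with the $\ell$ atoms-of-a-chain / with the generators of inversion sets. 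Symmetrically for meet-irreducibles, using the dual description. The cleanest way is to exhibit a single maximal chain $C$ and show every join-irreducible lies in $C$ and every meet-irreducible lies in $C$ (this is one of the standard equivalent formulations of extremality); producing that chain and checking the irreducibles sit on it is where I expect the bookkeeping to be heaviest.

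For semidistributivity and boundedness I would lean on the inversion-set model together with the theory of congruence-uniform (bounded) lattices. Concretely, I would exhibit a collection of interval doublings that builds $\Bub(m,n)$ from the one-element lattice: either by induction on $m+n$, doubling a suitable interval of $\Bub(m-1,n)$ or $\Bub(m,n-1)$ to introduce the new letter together with all its transpositions, or by directly producing an edge-labeling of the Hasse diagram (a $\mathrm{CU}$-labeling, e.g.\ the ``join-irreducible'' labeling $\jsdlabeling$ already named in the preamble) and verifying it satisfies the axioms that characterize congruence-uniform lattices. Since bounded lattices are automatically semidistributive, semidistributivity then comes for free; alternatively it follows directly from the meet/join formulas via the forbidden-sublattice or the $\mathrm{SD}_\vee$/$\mathrm{SD}_\wedge$ identities, which are easy to check once $\vee$ and $\wedge$ are explicit. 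The main obstacle, I expect, is the interval-doubling step: identifying the \emph{correct} interval to double at each stage so that the recursion closes — one must double an interval whose ``doubled copy'' accounts for exactly the new bubble words and new cover relations (both shuffle and transposition) created by adding $x_m$ or $y_n$, and proving that the doubled poset is order-isomorphic to $\Bub(m,n)$ requires a careful hands-on comparison of Hasse diagrams. Everything else reduces to combinatorial verification on inversion sets.
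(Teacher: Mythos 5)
Two steps of your plan have genuine gaps. The first is the foundation of the lattice argument: you assert that $\ubf\bubleq\vbf$ holds if and only if $\invset(\ubf)\subseteq\invset(\vbf)$ for a suitable inversion statistic, and that joins are then ``union plus closure.'' With the natural inversion set (pairs $(x_s,y_t)$ with $y_t$ preceding $x_s$) this is simply false: deleting $x_s$ destroys every inversion involving it, so comparable words need not have nested inversion sets (e.g.\ $y_1x_1\bubleq y_1$ while $\invset(y_1x_1)=\{(x_1,y_1)\}\not\subseteq\emptyset$), and incomparable words such as $x_1$ and $x_2$ have equal (empty) inversion sets. The paper's Lemma~\ref{lem:bubble_order} therefore compares the \emph{restricted} sets $\invset(\ubf_{\vbf})\subseteq\invset(\vbf_{\ubf})$ together with two subword conditions. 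One can repair your encoding by augmenting $\invset(\ubf)$ with the absent letters of $\xbf$, the present letters of $\ybf$, and all pairs involving an absent $x$-letter, but even then the substantive point---that the family of ``legal'' sets admits a smallest member containing a given union---is exactly where the difficulty sits; it is the content of Claims~1 and~2 in the proof of Theorem~\ref{thm:bubble_lattice} (every minimal upper bound of $\ubf,\vbf$ has support $\ubf_{\xbf}\cap\vbf_{\xbf}$ and $\ubf_{\ybf}\cup\vbf_{\ybf}$, and among words with that fixed support the minimal upper bound is unique), and your sketch offers no argument for it.

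The second gap is the criterion you call standard for extremality: it is \emph{not} equivalent to all join- and meet-irreducibles lying on a single maximal chain. Already the Boolean lattice on two atoms is extremal although no maximal chain contains both atoms, and in $\Bub(m,n)$ the join-irreducibles $\xbf_{\hat{\imath}}$, $i\in[m]$, form an antichain (Corollary~\ref{cor:bubble_irreducibles_poset}), so for $m\geq2$ no chain can contain them all; a proof built on that criterion would collapse. What does work, and is the paper's route (Theorem~\ref{thm:bubble_extremal}), is to count the join-irreducibles directly ($mn+m+n$ by Lemma~\ref{lem:bubble_irreducibles}), obtain the same count of meet-irreducibles from the duality $\Bub(m,n)^{d}\cong\Bub(n,m)$, exhibit one maximal chain of length $mn+m+n$, and invoke the general inequality \eqref{eq:lattice_length_irreds}. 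Finally, for interval-constructability your fallback---an explicit CU-labeling---is exactly the paper's method (the labeling \eqref{eq:bubble_labeling}, verified on polygonal intervals, which here may have five elements rather than four), but your primary suggestion of one doubling per added letter cannot work as stated: a single doubling at most doubles the number of elements, whereas passing from $\Bub(2,1)$ (12 elements) to $\Bub(2,2)$ (33 elements) already requires a whole sequence of doublings.
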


% Figure~\ref{fig:bubble2-2} shows $\Bub(2,2)$.  In comparison, the shuffle lattice $\ShufPoset(2,2)$ is shown in Figure~\ref{fig:shuf2-2}.\henri{Are these examples too big?}

% \begin{figure}
% 	\centering
% 	\includegraphics[page=1,scale=1]{shuffle_figures.pdf}
% 	\caption{The bubble lattice with $\xbf=x_1x_2$ and $\ybf=y_1y_2$.}
% 	\label{fig:bubble2-2}
% \end{figure}
% 
% \begin{figure}
% 	\centering
% 	\includegraphics[scale=.75]{shuf2-2}
% 	\caption{The shuffle lattice with $\xbf=x_1x_2$ and $\ybf=y_1y_2$.}
% 	\label{fig:shuf2-2}
% \end{figure}

In Section~\ref{sec:hochschild}, we show that $\Bub(m,1)$ is isomorphic to a so-called \emph{Hochschild lattice}, studied for instance in \cites{combe:geometric,muehle:hochschild,chapoton:dyck}.  In fact, the research presented in this article was motivated by the results of \cite{muehle:hochschild}, where it was shown that $\ShufPoset(m,1)$ arises from $\Bub(m,1)$ (treated in a different guise) through a combinatorial construction known as the \emph{core label order}, see \cite{muehle:core}.  This construction has its origins in N.~Reading's \emph{shard intersection order} in the context of posets of regions of hyperplane arrangements~\cite{reading:lattice_theory} and has a huge significance in the field of Coxeter--Catalan combinatorics~\cites{reading:shard,reading:clusters}.  Our initial goal was to exhibit a lattice $\Lattice_{m,n}$ whose core label order is isomorphic to the shuffle lattice $\ShufPoset(m,n)$, thus addressing \cite{muehle:hochschild}*{Question~7.1}.  The bubble lattices do not quite have this property.  Our data suggest they are close in the sense that their core label orders share the rank enumeration with the bubble lattices, but in general have more covering pairs.  Moreover, in general, the core label orders of the bubble lattices are not themselves lattices.  At the current stage, we do not have anything significant to say about these posets, so we do not treat them here.  Instead we focus on the combinatorial, topological and enumerative properties of $\Bub(m,n)$.  Computational evidence leads us to believe that the desired lattice $\Lattice_{m,n}$ whose core label order is $\ShufPoset(m,n)$ may not exist.  We finish this article by describing the Galois graph of the bubble lattice in Section~\ref{sec:bubble_galois}.

\section{Preliminaries}

\subsection{Posets}

Throughout this section, we fix a finite set $P$, and consider an order relation on $P$, \ie a reflexive, antisymmetric and transitive binary relation $R\subseteq P\times P$.  We normally write $\leq$ instead of $R$ and call the pair $\Poset=(P,\leq)$ a \defn{poset} (short for \textbf{p}artially \textbf{o}rdered \textbf{set}).  The \defn{dual} poset of $\Poset$ is $\Poset^{d}\defs(P,\geq)$, where the order relations are reversed.  A \defn{minimal} element of $\Poset$ is $m\in P$ such that for all $p\in P$ it holds that $p\leq m$ implies $p=m$.  A \defn{maximal} element of $\Poset$ is a minimal element of $\Poset^{d}$.

For $p,q\in P$ with $p\leq q$, the set $[p,q]\defs\{r\in P\mid p\leq r\leq q\}$ is an \defn{interval} of $\Poset$.  A \defn{covering pair} of $\Poset$ is an interval of cardinality two, and we write $p\lessdot q$ in this case.  The set $\Covers(\Poset)\defs\bigl\{(p,q)\mid p\lessdot q\}$ denotes the cover relation of $\Poset$.  The cover relation comprises a minimal representation of $\Poset$, because the non-covering order relations can be recovered by taking the reflexive and transitive closure of $\Covers(\Poset)$.  The \defn{(Hasse) diagram} of $\Poset$ is the directed graph $\bigl(\Poset,\Covers(\Poset)\bigr)$.  From this perspective, it makes sense to call the covering pairs of $\Poset$ \defn{edges}.
An \defn{edge labeling} of $\Poset$ is any map $\lambda\colon\Covers(\Poset)\to\Lambda$, where $\Lambda$ is an arbitrary (po)set.

A totally ordered subset $C\subseteq P$ is a \defn{chain}.  More precisely, for any $c_{1},c_{2}\in C$ it holds that $c_{1}\leq c_{2}$ or $c_{2}\leq c_{1}$.  The \defn{length} of a chain $C$ is $\lvert C\rvert-1$.  A chain is \defn{maximal} if there does not exist a chain $C'$ of $\Poset$ such that $C\subsetneq C'$.  In other words, a maximal chain can be written as a sequence of covering pairs and contains a minimal and a maximal element of $\Poset$.

If all maximal chains of  $\Poset$ have the same size, then $\Poset$ is \defn{graded}.  In that case, we may define for every $p\in P$ its \defn{rank}: this is simply the maximum length of chain containing $p$ and any minimal element below $p$.  
% Dually, the \defn{corank} of $p$ is the maximum length of a chain from $p$ to some maximal element above $p$.  We then write $\rk(p)$ resp. $\cork(p)$ for the rank, resp. corank of $p$.

The converse notion of a chain is an \defn{antichain}, \ie a subset $A\subseteq P$ where \emph{no} distinct members of $A$ are mutually comparable.  Antichains are essentially equivalent to \defn{order ideals}, which are subsets that are downward closed.  More precisely, a subset $I\subseteq P$ is an \defn{order ideal} of $\Poset$ if for all $p\in P$ and all $a\in I$ it holds that $p\leq a$ implies $p\in I$.  The maximal elements of an order ideal form an antichain, and conversely, every antichain $A$ generates the following order ideal
\begin{displaymath}
	P_{\leq A}\defs\bigl\{p\in P\mid p\leq a\;\text{for some}\;a\in A\bigr\}.
\end{displaymath}

\subsection{Lattices}
	\label{sec:lattices}
An \defn{upper bound} for $p,q\in P$ is any element $r\in P$ satisfying $p\leq r$ and $q\leq r$.  If the set of upper bounds has a unique minimal element, then this element is the \defn{join} of $p$ and $q$, and we write $p\vee q$ for this element.  A lower bound for $p$ and $q$ in $\Poset$ is an upper bound for $p$ and $q$ in $\Poset^{d}$, and the meet $p\wedge q$ of $p$ and $q$ (if it exists) in $\Poset$ is the join of $p$ and $q$ in $\Poset^{d}$.  If any two distinct elements of $P$ have a join and a meet, then $\Poset$ is a \defn{lattice}.  

Equivalently, lattices can be viewed as algebraic structures (determined by two binary operations $\vee$ and $\wedge$ satisfying certain axioms) and we are interested in certain classes of lattices.  Nonetheless, we view lattices as poset, and for the remainder of this subsection we consider a finite lattice $\Lattice=(L,\leq)$.

A lattice element $p\in L\setminus\{\hat{0}\}$ is \defn{join-irreducible} if whenever $p=r_{1}\vee r_{2}$ for distinct elements $r_{1},r_{2}\in L$, then $p\in\{r_{1},r_{2}\}$.  Dually, $p$ is \defn{meet-irreducible} if it is join-irreducible in $\Lattice^{d}$.  We write $\JI(\Lattice)$ resp. $\MI(\Lattice)$ for the set of join- resp. meet-irreducible elements of $\Lattice$.  Since we consider only finite lattices, we may easily spot join-irreducible elements in the diagram of $\Lattice$.  These are the elements covering a unique element.  In other words, if $j\in \JI(\Lattice)$, then there exists a unique $j_{*}\in L$ such that $(j_{*},j)\in\Covers(\Lattice)$.

Two covering pairs $(p,q),(p',q')\in\Covers(\Lattice)$ are \defn{perspective} if either $q\vee p'=q'$ and $q\wedge p'=p$ or $q'\vee p=q$ and $q'\wedge p=p'$.  See Figure~\ref{fig:perspective_covers} for an illustration.

\begin{figure}
	\centering
	\includegraphics[page=2,scale=1]{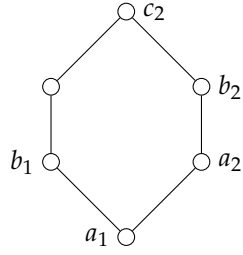}
	\caption{The covering pairs $(a_{1},b_{1})$ and $(b_{2},c_{2})$ are perspective.  The covering pair $(a_{2},b_{2})$ is not perspective to any other covering pair.}
	\label{fig:perspective_covers}
\end{figure}

\subsubsection{Semidistributive Lattices}

The lattice $\Lattice$ is \defn{join semidistributive} if for all $p,q,r\in L$ it holds that 
\begin{equation}\label{eq:jsd}
	p\vee q=p\vee r\quad\text{implies}\quad p\vee q=p\vee(q\wedge r)
\end{equation}
Moreover, $\Lattice$ is \defn{meet semidistributive} if $\Lattice^{d}$ is join semidistributive, and $\Lattice$ is \defn{semidistributive} if it is both join and meet semidistributive.

A join semidistributive lattice admits nice canonical forms for its elements.  For $p\in L$, a set $X\subseteq L$ is a \defn{join representation} of $p$ if $p=\bigvee X$.  Such a join representation is \defn{irredundant} if no proper subset of $X$ joins to $p$.  Moreover, $X$ \defn{join refines} another join representation $X'$ of $p$ if $L_{\leq X}\subseteq L_{\leq X'}$.  The unique minimal element in the set of join representations of $p$ under this join refinement (if it exists) is the \defn{canonical join representation} of $p$; denoted by $\canset(p)$.  We have the following characterization of finite join semidistributive lattices.

\begin{theorem}[\cite{freese:free}*{Theorem~2.24}]\label{thm:jsd_characterization}
	A finite lattice is join semidistributive if and only if every element admits a canonical join representation.
\end{theorem}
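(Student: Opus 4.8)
The plan is to prove the two implications separately; one direction is short, the other carries all the weight.

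For the reverse implication, assume every element of the finite lattice $\Lattice$ has a canonical join representation, and let $p,q,r\in L$ with $p\vee q=p\vee r=s$. Since $\{p,q\}$ and $\{p,r\}$ are both join representations of $s$, and $\canset(s)$ is the minimum among all join representations of $s$ in the join-refinement order, $\canset(s)$ join refines each of them. Hence every $j\in\canset(s)$ satisfies $j\le p$ or $j\le q$, and also $j\le p$ or $j\le r$, so every $j\in\canset(s)$ with $j\not\le p$ satisfies $j\le q\wedge r$. Splitting $\canset(s)$ according to whether $j\le p$ and joining, one obtains $s=\bigvee\canset(s)\le p\vee(q\wedge r)\le p\vee q=s$, which is exactly \eqref{eq:jsd}.

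For the forward implication, assume $\Lattice$ is join semidistributive. First one records the standard consequence that for every $a\le b$ in $L$ the set $\{c\in L\mid a\vee c=b\}$ has a least element $\kappa(a,b)$: it is finite and nonempty, and applying \eqref{eq:jsd} repeatedly shows it is closed under meet. Fix $p\in L$; the goal is to exhibit $\canset(p)$. If $p$ is join-irreducible (in particular if $p=\hat{0}$) then $\{p\}$, respectively $\emptyset$, is readily seen to be the minimum join representation, since any join representation of a join-irreducible element must contain it. Otherwise choose a join representation $D$ of $p$ that is minimal in the join-refinement order (these exist since $L$ is finite), and use minimality in the usual way to arrange that $D$ is an irredundant antichain of join-irreducibles. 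One must then show that $D$ join refines an arbitrary join representation $Y$ of $p$; replacing the members of $Y$ by the join-irreducibles below them and discarding comparable ones, $Y$ may be taken to be an irredundant antichain of join-irreducibles as well. Fix $x\in D$, put $w=\bigvee(D\setminus\{x\})$ (so $w<p$ and $x\vee w=p$), and let $x_{*}$ be the lower cover of the join-irreducible $x$; minimality of $D$ forces $w\vee x_{*}<p$, for otherwise $(D\setminus\{x\})\cup\{x_{*}\}$ would be a strictly finer join representation of $p$. Suppose now that $x\not\le y$ for every $y\in Y$, so that $x\wedge y\le x_{*}$ for all $y$. Applying \eqref{eq:jsd} to the identities $x\vee(w\vee x_{*})=p=y\vee(w\vee x_{*})$ first shows $w\vee x_{*}\vee y<p$ for each $y\in Y$ (otherwise $p=(x\wedge y)\vee(w\vee x_{*})=w\vee x_{*}$, contradicting $w\vee x_{*}<p$); the plan is then to extract a contradiction from the family $\{w\vee x_{*}\vee y\}_{y\in Y}$, whose join is $p$, by a further, more delicate application of \eqref{eq:jsd} together with the minimality of $D$, forcing $x\le y$ for some $y$ after all. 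Since $Y$ was arbitrary, $D$ is the minimum join representation of $p$, that is, $D=\canset(p)$.

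The main obstacle is exactly this last step: because join-irreducible elements of a join semidistributive lattice need not be join-prime, one cannot simply locate the member of $Y$ lying above $x$, and the passage from "$x$ lies below the join of a sub-collection of $Y$" to "$x$ lies below a single member" must be squeezed out of semidistributivity together with the minimality of $D$. An equivalent way to package the same difficulty is to observe that the claim amounts to the family of order ideals of $\Lattice$ whose join is $p$ being closed under intersection. A third, more computational route — essentially the one in \cite{freese:free} — bypasses the choice of $D$: define $\kappa(j)$, for $j\in\JI(\Lattice)$, to be the largest $z\in L$ with $j_{*}\le z$ and $j\not\le z$ (which exists precisely because $\Lattice$ is join semidistributive), and verify directly that $\{\,j\in\JI(\Lattice)\mid j\le p\text{ and }p\le j\vee\kappa(j)\,\}$ joins to $p$ and join refines every join representation of $p$; this set is then $\canset(p)$. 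Either way, semidistributivity is what makes the relevant order ideals behave, and this verification is the technical heart of the argument.
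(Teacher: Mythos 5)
Your reverse implication is complete and correct: since $\canset(s)$ join refines both $\{p,q\}$ and $\{p,r\}$, every canonical joinand not below $p$ lies below $q\wedge r$, and \eqref{eq:jsd} follows. (For the record, the paper offers no argument to compare against here: it quotes this statement from Freese--Je{\v z}ek--Nation.) The forward implication, however, is not a proof. Its main line stops exactly where the work is: after fixing a refinement-minimal join representation $D$ and assuming $x\in D$ lies below no member of $Y$, you announce that a contradiction is ``to be extracted \ldots by a further, more delicate application of \eqref{eq:jsd} together with the minimality of $D$,'' which is precisely the technical heart you acknowledge but do not supply. Worse, your fallback ``computational route'' rests on a false premise: the element $\kappa(j)$, the largest $z$ with $j_{*}\leq z$ and $j\not\leq z$, exists for all $j\in\JI(\Lattice)$ if and only if the finite lattice is \emph{meet} semidistributive (this is Freese--Je{\v z}ek--Nation, Theorem~2.56, the very fact the paper invokes in Lemma~\ref{lem:semidistributive_odim}, where both halves of semidistributivity are assumed). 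Since join semidistributivity does not imply meet semidistributivity, under the hypotheses of the theorem $\kappa(j)$ need not exist, so that route is unavailable as stated.

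The gap can be closed cheaply with the tool you already built. You showed, using \eqref{eq:jsd}, that for $a\leq b$ the set $\{c\in L\mid a\vee c=b\}$ is meet-closed and hence has a least element; apply this only to covers. For each lower cover $c\lessdot p$ put $j_{c}\defs\bigwedge\{r\in L\mid r\vee c=p\}$, which is exactly the label $\jsdlabeling(c,p)$ of \eqref{eq:jsd_labeling}. Then (i) $\bigvee_{c\lessdot p}j_{c}=p$, since otherwise this join would lie below some lower cover $c_{0}$, contradicting $j_{c_{0}}\vee c_{0}=p$; and (ii) for any join representation $Y$ of $p$ and any $c\lessdot p$, some $y\in Y$ satisfies $y\not\leq c$, hence $y\vee c=p$ and so $j_{c}\leq y$ by minimality of $j_{c}$. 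Thus $\{j_{c}\mid c\lessdot p\}$ is a join representation refining every other, i.e.\ it is $\canset(p)$ --- this is the content of Proposition~\ref{prop:join_semidistributive_canonical}. Note that no choice of a refinement-minimal $D$ and no join-primeness of join-irreducibles is needed, which is how the difficulty you singled out is genuinely avoided rather than ``squeezed out.''
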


The members of $\canset(p)$ are the \defn{canonical joinands} of $p$.  It is quickly verified that canonical joinands are join-irreducible.  There is a nice combinatorial way to compute the canonical joinands of $p$ using a particular edge labeling of $\Lattice$.  We consider
\begin{equation}\label{eq:jsd_labeling}
	\jsdlabeling\colon\Covers(\Lattice)\to\JI(\Lattice), (p,q)\mapsto\bigwedge\{r\in L\mid p\vee r=q\}.
\end{equation}
If $\Lattice$ is join semidistributive, then this labeling is well defined (in the sense that its image is indeed contained in $\JI(\Lattice)$); see\cite{adaricheva:join}*{Lemma~1.8}.  In fact, we get much more.

\begin{proposition}[\cite{barnard:canonical}*{Lemma~19}]\label{prop:join_semidistributive_canonical}
	If $\Lattice=(L,\leq)$ is join-semidistributive, then the canonical join representation of $p\in L$ is $\bigl\{\jsdlabeling(p',p)\mid p'\lessdot p\bigr\}$.
\end{proposition}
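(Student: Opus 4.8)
The plan is to verify that the set $A\defs\bigl\{\jsdlabeling(p',p)\mid p'\lessdot p\bigr\}$ has the three properties that characterize a canonical join representation: (i) $A$ is a join representation of $p$, (ii) $A$ join refines every join representation of $p$, and (iii) $A$ is irredundant. Granting these, $A$ is the least element of the set of join representations of $p$ under join refinement, hence an antichain; since $\Lattice$ is join semidistributive, $\canset(p)$ exists by Theorem~\ref{thm:jsd_characterization}, and combining (ii) with the minimality of $\canset(p)$ gives $L_{\le A}=L_{\le\canset(p)}$, so the two antichains $A$ and $\canset(p)$ have the same order ideal below them and therefore coincide. Throughout I use the elementary fact (recalled just before the statement) that, by join semidistributivity, the set $R(p',p)\defs\{r\in L\mid p'\vee r=p\}=\{r\le p\mid r\not\le p'\}$ is closed under meet, hence has a least element, which is exactly $\jsdlabeling(p',p)$; in particular $p'\vee\jsdlabeling(p',p)=p$ and $\jsdlabeling(p',p)\not\le p'$.

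Properties (i) and (ii) are short. Each $\jsdlabeling(p',p)\le p$, so $\bigvee A\le p$; if $\bigvee A<p$, choose $p^{*}\lessdot p$ with $\bigvee A\le p^{*}$, and then $\jsdlabeling(p^{*},p)\le\bigvee A\le p^{*}$ contradicts $p^{*}\vee\jsdlabeling(p^{*},p)=p$. For (ii), let $X$ be any join representation of $p$ and $p'\lessdot p$; since $\bigvee X=p\not\le p'$, some $x\in X$ has $x\not\le p'$, so $p'\vee x=p$ because $p'\lessdot p$, whence $\jsdlabeling(p',p)=\bigwedge R(p',p)\le x$; thus each member of $A$ lies weakly below a member of $X$ and $L_{\le A}\subseteq L_{\le X}$. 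Applying (ii) to $X=A\setminus\{\jsdlabeling(p',p)\}$ shows that (iii) is equivalent to the assertion that distinct lower covers of $p$ receive incomparable $\jsdlabeling$-labels; this is the step I expect to be the main obstacle. (In the paper's application $\Lattice$ is moreover meet semidistributive, where incomparability follows quickly from the identity $j\wedge p_{1}=j\wedge p_{2}=j_{*}$ for a would-be common label $j$; for merely join semidistributive $\Lattice$ one needs the argument below.)

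To prove incomparability of labels I would argue by a counterexample minimizing $\lvert L\rvert$: distinct $p_{1},p_{2}\lessdot p$ with $j_{1}\defs\jsdlabeling(p_{1},p)\le j_{2}\defs\jsdlabeling(p_{2},p)$. Passing to $[\hat 0,p]$ leaves these labels unchanged, so assume $p=\hat 1$. The case $j_{1}=j_{2}=:j$ cannot occur: join semidistributivity applied to $p_{1}\vee j=\hat 1=p_{2}\vee j$ gives $j\vee(p_{1}\wedge p_{2})=\hat 1$, so either $p_{1}\wedge p_{2}=\hat 0$ and $j=\hat 1\notin\JI(L)$, or $p_{1}\wedge p_{2}>\hat 0$ and in the finite join semidistributive interval $I\defs[p_{1}\wedge p_{2},\hat 1]$ one computes $\jsdlabeling_{I}(p_{1},\hat 1)=j\vee(p_{1}\wedge p_{2})=\hat 1\notin\JI(I)$, each contradicting that $\jsdlabeling$ lands in join-irreducibles on a join semidistributive lattice \cite{adaricheva:join}*{Lemma~1.8}. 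So $j_{1}<j_{2}$; since $j_{2}\in R(p_{2},\hat 1)$, either $j_{1}\in R(p_{2},\hat 1)$ (forcing $j_{2}\le j_{1}$, impossible) or $j_{1}\le p_{2}$. In $I=[p_{1}\wedge p_{2},\hat 1]$ one has $\jsdlabeling_{I}(p_{i},\hat 1)=j_{i}\vee(p_{1}\wedge p_{2})$, which are still comparable, so minimality of $L$ forces $p_{1}\wedge p_{2}=\hat 0$. Writing $j_{1,*}\lessdot j_{1}$ for the unique lower cover of the join-irreducible $j_{1}$, one gets $j_{1,*}\le p_{1}$ (else $p_{1}\vee j_{1,*}=\hat 1$ would give $j_{1}\le j_{1,*}$) and $j_{1,*}<j_{1}\le p_{2}$, hence $j_{1,*}\le p_{1}\wedge p_{2}=\hat 0$, so $j_{1}$ is an atom; consequently the lower cover of $j_{2}$ is $\ge j_{1}>\hat 0$, so $j_{2}$ is not an atom. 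Finally, $j_{1}=\min R(p_{1},\hat 1)$ forces every atom distinct from $j_{1}$ to lie below $p_{1}$, while $j_{2}=\min R(p_{2},\hat 1)$ being a non-atom forces every atom to lie below $p_{2}$; so every atom other than $j_{1}$ lies below $p_{1}\wedge p_{2}=\hat 0$, which is absurd, leaving $j_{1}$ as the unique atom of $L$. But $p_{1}>\hat 0$ (otherwise $\jsdlabeling(p_{1},\hat 1)=\hat 1\notin\JI(L)$), so $p_{1}$ dominates an atom, necessarily $j_{1}$, contradicting $j_{1}\not\le p_{1}$. This contradiction yields incomparability of labels, hence the irredundancy of $A$, and completes the proof.
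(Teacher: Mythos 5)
Your proposal is correct, but note that the paper itself gives no argument for this statement: it is quoted verbatim from \cite{barnard:canonical}*{Lemma~19}, so the only honest comparison is with the literature rather than with an in-paper proof. Your route is a clean, self-contained alternative. The easy parts (that $\bigl\{\jsdlabeling(p',p)\mid p'\lessdot p\bigr\}$ joins to $p$ and join-refines every join representation of $p$) are handled exactly as one would expect, using that $\{r\mid p'\vee r=p\}=\{r\leq p\mid r\not\leq p'\}$ is meet-closed by join semidistributivity and hence has $\jsdlabeling(p',p)$ as its least element. The real content is irredundancy, which you correctly reduce to the incomparability of the labels attached to distinct lower covers of $p$, and your minimal-counterexample argument for that claim checks out: the reduction to $p=\hat{1}$, the computation $\jsdlabeling_{[p_1\wedge p_2,\hat 1]}(p_i,\hat 1)=j_i\vee(p_1\wedge p_2)$, the exclusion of equal labels via join semidistributivity and the join-irreducibility of labels (the fact from \cite{adaricheva:join} already invoked in the paper), the forcing of $p_1\wedge p_2=\hat 0$, and the final atom-counting contradiction are all sound. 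Two cosmetic remarks: the claimed ``equivalence'' of irredundancy with label-incomparability is only needed (and only cleanly true) in the direction you actually use, since equal labels of distinct covers would not by itself make the \emph{set} of labels redundant; and $p_1>\hat 0$ follows immediately from the fact that distinct lower covers of the same element are incomparable, so the detour through $\jsdlabeling(p_1,\hat 1)=\hat 1$ is unnecessary (though not wrong). Neither point affects correctness; what your approach buys is a proof of the quoted lemma from first principles, using only the two facts about $\jsdlabeling$ that the paper itself records.
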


The labeling $\jsdlabeling$ has the following property, which is similar to Lemma~\ref{lem:cu_labeling_perspective} below.

\begin{lemma}[\cite{muehle:distributive}*{Lemma~3.3}]\label{lem:jsd_labeling_perspective}
	Let $\Lattice=(L,\leq)$ be a finite join-semidistributive lattice and let $(p,q)\in\Covers(\Lattice)$.  Then $\jsdlabeling(p,q)=j$ if and only if $(p,q)$ and $(j_{*},j)$ are perspective.
\end{lemma}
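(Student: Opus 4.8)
The plan is to reduce perspectivity of $(p,q)$ with $(j_{*},j)$ to two concrete lattice identities and then dispatch each implication by a one-line computation with \eqref{eq:jsd}. Set $R\defs\{r\in L\mid p\vee r=q\}$. Since $q\in R$ and, by \eqref{eq:jsd}, $p\vee r_{1}=p\vee r_{2}=q$ forces $p\vee(r_{1}\wedge r_{2})=q$, the set $R$ is closed under meet; hence $\jsdlabeling(p,q)=\bigwedge R$ is the \emph{least} element of $R$, and it is join-irreducible by \cite{adaricheva:join}*{Lemma~1.8}. So $\jsdlabeling(p,q)$ is always some $j\in\JI(\Lattice)$ with $p\vee j=q$; write $j_{*}$ for its unique lower cover.

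First I would establish the reduction: for a join-irreducible $j$ with unique lower cover $j_{*}$, the covering pairs $(p,q)$ and $(j_{*},j)$ are perspective if and only if $p\vee j=q$ and $p\wedge j=j_{*}$. The ``if'' part is literally the second clause of the definition of perspectivity. For ``only if'', the first clause reads $q\vee j_{*}=j$ and $q\wedge j_{*}=p$; but $q\vee j_{*}=j$ forces $q\leq j$ and $q\not\leq j_{*}$, and since every element strictly below a join-irreducible lies weakly below its unique lower cover, this gives $q=j$ and then $p=q\wedge j_{*}=j_{*}$, a degenerate situation that also satisfies the second clause. So perspectivity here is equivalent to the pair of equations $p\vee j=q$, $p\wedge j=j_{*}$.

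For the forward direction of the lemma, I would assume $\jsdlabeling(p,q)=j$. Then $j\in R$, so $p\vee j=q$; in particular $j\not\leq p$, hence $p\wedge j<j$ and therefore $p\wedge j\leq j_{*}$. For the reverse inequality, consider $p\vee j_{*}\in[p,q]$: as $p\lessdot q$ it equals $p$ or $q$, and it cannot equal $q$, for otherwise $j_{*}\in R$ with $j_{*}<j=\min R$, a contradiction; hence $j_{*}\leq p$, so $j_{*}\leq p\wedge j$, giving $p\wedge j=j_{*}$. By the reduction, $(p,q)$ and $(j_{*},j)$ are perspective. Conversely, if $(p,q)$ and $(j_{*},j)$ are perspective then by the reduction $p\vee j=q$ (so $j\in R$) and $p\wedge j=j_{*}$; to see $j=\min R=\jsdlabeling(p,q)$, take any $r\in R$ and apply \eqref{eq:jsd} to $p\vee r=q=p\vee j$ to obtain $p\vee(r\wedge j)=q$. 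If $r\wedge j<j$ then $r\wedge j\leq j_{*}\leq p$, forcing $p\vee(r\wedge j)=p\neq q$; hence $r\wedge j=j$, i.e.\ $j\leq r$, as desired.

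There is no serious obstacle here: the argument is bookkeeping around the definition of perspectivity together with two invocations of \eqref{eq:jsd}. The one point that needs a moment's care is the reduction step — verifying that the first clause of perspectivity is redundant in this setting — which rests on the elementary observation that an element strictly below a join-irreducible element lies weakly below its unique lower cover.
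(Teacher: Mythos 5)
Your proof is correct. Note that the paper does not prove this lemma at all -- it is quoted from \cite{muehle:distributive}*{Lemma~3.3} -- so there is no in-paper argument to compare against; your reduction of perspectivity to the single clause $p\vee j=q$, $p\wedge j=j_{*}$ (using that anything strictly below a join-irreducible lies below its unique lower cover), together with the observation that \eqref{eq:jsd} makes $R=\{r\mid p\vee r=q\}$ meet-closed so that $\jsdlabeling(p,q)=\min R$, is exactly the standard argument and is sound as a self-contained verification.
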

% \begin{proof}
% 	Suppose that $\jsdlabeling(p,q)=j$.  By definition, $p\vee j=q$ and thus $p\wedge j<j$.  By minimality of $j$, it follows that $p\vee j_{*}<q$, and therefore $p\vee j_{*}= p$ since $p\lessdot q$.  This implies $p\wedge j=j_{*}$, which entails that $(p,q)$ and $(j_{*},j)$ are perspective.
%
% 	Conversely, suppose that $(p,q)$ and $(j_{*},j)$ are perspective.  Since $j$ is join irreducible, we must have that $j\vee p=q$ and $j\wedge p=j_{*}$.  Thus, $\jsdlabeling(p,q)\leq j$.  Moreover, since $j_{*}\leq p$, we conclude $\jsdlabeling(p,q)\not\leq j_{*}$.  Since $j\in\JI(\Lattice)$, it follows that $\jsdlabeling(p,q)=j$ as desired.
% \end{proof}

\begin{lemma}[\cite{freese:free}*{Corollary~2.55}]
	If $\Lattice$ is semidistributive, then $\bigl\lvert\JI(\Lattice)\bigr\rvert=\bigl\lvert\MI(\Lattice)\bigr\rvert$.
\end{lemma}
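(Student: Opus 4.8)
The plan is to exhibit an explicit bijection $\kappa\colon\JI(\Lattice)\to\MI(\Lattice)$ built from the labeling $\jsdlabeling$ and its meet-theoretic counterpart, with Lemma~\ref{lem:jsd_labeling_perspective} (and its dual) doing the heavy lifting. Write $\mu\colon\Covers(\Lattice)\to\MI(\Lattice)$ for the map $(p,q)\mapsto\bigvee\{r\in L\mid q\wedge r=p\}$. Since $\mu$ is precisely the labeling $\jsdlabeling$ computed in $\Lattice^{d}$ and $\Lattice$ is meet-semidistributive, $\mu$ is well defined with image in $\MI(\Lattice)$, by the dual of \cite{adaricheva:join}*{Lemma~1.8}. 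Because perspectivity is a self-dual relation, the dual of Lemma~\ref{lem:jsd_labeling_perspective} reads: $\mu(p,q)=m$ if and only if $(p,q)$ and $(m,m^{*})$ are perspective, where $m^{*}$ denotes the unique element covering the meet-irreducible $m$. Also, every covering pair is perspective to itself, so $\jsdlabeling(j_{*},j)=j$ for $j\in\JI(\Lattice)$ and $\mu(m,m^{*})=m$ for $m\in\MI(\Lattice)$.

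The one genuine ingredient is the claim that $\jsdlabeling$ takes the same value on perspective covering pairs (hence, dually, so does $\mu$). I would prove this by a short direct computation: if $(p,q)$ and $(p',q')$ are perspective with $q\vee p'=q'$ and $q\wedge p'=p$, then $p\leq p'$ and $q\leq q'$, so $p\vee r=q$ forces $p'\vee r=p'\vee q=q'$, giving $\{r\mid p\vee r=q\}\subseteq\{r\mid p'\vee r=q'\}$ and hence $\jsdlabeling(p',q')\leq\jsdlabeling(p,q)$; conversely, if $p'\vee r=q'=p'\vee q$ then join semidistributivity yields $p'\vee(r\wedge q)=q'$, and then $x\defs p\vee(r\wedge q)$ satisfies $p\leq x\leq q$ and $p'\vee x=q'$, which rules out $x=p$ (that would give $p'=q'$) and forces $x=q$, i.e.\ $r\wedge q\in\{s\mid p\vee s=q\}$, so $\jsdlabeling(p,q)\leq\jsdlabeling(p',q')$. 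The reversed orientation of perspectivity is symmetric, and the dual computation in $\Lattice^{d}$ covers $\mu$.

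With this in hand I would set $\kappa(j)\defs\mu(j_{*},j)$ and verify it is a bijection. Injectivity: if $\kappa(j)=\kappa(j')=m$, then by the dual of Lemma~\ref{lem:jsd_labeling_perspective} both $(j_{*},j)$ and $(j'_{*},j')$ are perspective to $(m,m^{*})$, so by the previous paragraph $j=\jsdlabeling(j_{*},j)=\jsdlabeling(m,m^{*})=\jsdlabeling(j'_{*},j')=j'$. Surjectivity: given $m\in\MI(\Lattice)$, set $j\defs\jsdlabeling(m,m^{*})\in\JI(\Lattice)$; by Lemma~\ref{lem:jsd_labeling_perspective} the pairs $(m,m^{*})$ and $(j_{*},j)$ are perspective, so $\kappa(j)=\mu(j_{*},j)=\mu(m,m^{*})=m$. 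Hence $\lvert\JI(\Lattice)\rvert=\lvert\MI(\Lattice)\rvert$. The main obstacle is the perspectivity-invariance claim of the second paragraph; the rest is bookkeeping, the only delicate point being to set up $\mu$ as the dual of $\jsdlabeling$ correctly so that all dualized statements can be invoked for free.
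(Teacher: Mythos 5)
Your argument is correct. Note, however, that the paper does not prove this statement at all: it is quoted verbatim from Freese--Je\v{z}ek--Nation \cite{freese:free}*{Corollary~2.55}, so there is no internal proof to compare against. What you have written is essentially a reconstruction of the standard argument behind that citation, namely the bijection $j\mapsto\kappa(j)$ sending a join-irreducible $j$ to the largest element above $j_{*}$ not above $j$ (your $\mu(j_{*},j)=\bigvee\{r\mid j\wedge r=j_{*}\}$ is exactly this element, since every $r\geq j_{*}$ with $r\not\geq j$ satisfies $j\wedge r=j_{*}$ by uniqueness of the lower cover of $j$); meet-semidistributivity makes it well defined and meet-irreducible, join-semidistributivity makes it injective and surjective. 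Your packaging through the labelings $\jsdlabeling$ and $\mu$ and through Lemma~\ref{lem:jsd_labeling_perspective} and its dual is sound: the perspectivity-invariance computation in your second paragraph is correct (the only ingredients are $p\leq p'$, $q\leq q'$, the covering property of $(p,q)$, and one application of \eqref{eq:jsd}), and the injectivity/surjectivity bookkeeping then goes through, with finiteness of $\Lattice$ (assumed throughout the paper's Section~\ref{sec:lattices}) used implicitly whenever meets or joins over sets and the elements $j_{*}$, $m^{*}$ are invoked. So the proposal is a valid, self-contained proof that buys independence from the external reference at the cost of re-deriving, inside the perspectivity framework the paper already sets up, what \cite{freese:free} obtains directly from the $\kappa$-map.
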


\subsubsection{Lattices Constructable by Interval Doublings}

Given two posets $\Poset_{1}=(P_{1},\leq_{1})$ and $\Poset_{2}=(P_{2},\leq_{2})$, the \defn{direct product} is the poset $\Poset\defs(P_{1}\times P_{2},\leq)$, where $(p_{1},p_{2})\leq(q_{1},q_{2})$ if and only if $p_{1}\leq_{1}q_{1}$ and $p_{2}\leq_{2}q_{2}$.  It is easy to show that the direct product of two lattices is again a lattice.  Given an integer $n>0$, we write $[n]\defs\{1,2,\ldots,n\}$.  We write $\mathbf{n}=\bigl([n],\leq)$, where $\leq$ is the usual order on the integers.  

For a subset $X\subseteq P$, the \defn{doubling} of $\Poset$ by $X$ is the induced subposet of the direct product $\Poset\times\mathbf{2}$ determined by the ground set
\begin{displaymath}
	\Bigl(P_{\leq X}\times\{1\}\Bigr)\uplus\Bigl(\bigl((P\setminus P_{\leq X})\cup X\bigr)\times\{2\}\Bigr).
\end{displaymath}
We write $\Poset[X]$ for the resulting poset.  See Figure~\ref{fig:doubling} for an illustration.

\begin{figure}
    \centering
	\includegraphics[page=3,scale=1]{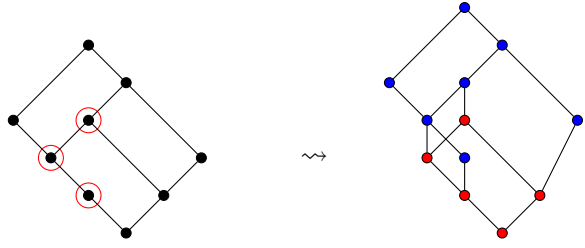}
    \caption{Doubling the poset on the left by the highlighted interval yields the poset on the right.}
    \label{fig:doubling}
\end{figure}

If $\Poset$ is a lattice and $X$ is order convex, then the doubling $\Poset[X]$ is again a lattice~\cite{day:characterizations}.  Of particular interest are the lattices which are \defn{constructable by interval doublings}, \ie lattices that arise from the singleton lattice by a sequence of interval doublings.  Such lattices are characterized by the existence of a certain edge labeling.

Let $\Lattice=(L,\leq)$ be a finite lattice.  An interval $I=[p,q]$ in $\Lattice$ is \defn{polygonal} if it is the union of two chains whose only common elements are $p$ and $q$.  In particular, there exist exactly two elements $p_{1},p_{2}\in L$ such that $p\lessdot p_{i}\leq q$ for $i\in\{1,2\}$ and $p_{1}\vee p_{2}=q$.  We say that $I$ is \defn{generated} by $p,p_{1},p_{2}$.
Then, an edge labeling $\lambda$ of $\Lattice$ is a \defn{CU-labeling} if the following properties are satisfied for every polygonal interval (which is supposed to be generated by $p,p_{1},p_{2}$):
\begin{description}
	\item[CU1\label{it:cu1}] the elements $q_{1}\in C_{1}$ and $q_{2}\in C_{2}$ such that $q_{1}\lessdot p_{1}\vee p_{2}$ and $q_{2}\lessdot p_{1}\vee p_{2}$ satisfy $\lambda(p,p_{1})=\lambda(q_{2},p_{1}\vee p_{2})$ and $\lambda(p,p_{2})=\lambda(q_{1},p_{1}\vee p_{2})$;
	\item[CU2\label{it:cu2}] if $r,r'\in C_{1}$ (resp. $r,r'\in C_{2}$) with $p<r\lessdot r'<p_{1}\vee p_{2}$, then $\lambda(p,p_{1})<\lambda(r,r')$ and $\lambda(p,p_{2})<\lambda(r,r')$;
	\item[CU3\label{it:cu3}] the labels appearing on $C_{1}$ (resp. $C_{2}$) are pairwise distinct;
	\item[CU4\label{it:cu4}] $\lambda(j_{*},j)\neq\lambda(j'_{*},j')$ for all $j,j'\in\JI(\Poset)$ with $j\neq j'$;
	\item[CU5\label{it:cu5}] $\lambda(m,m^{*})\neq\lambda(m',(m')^{*})$ for all $m,m'\in\MI(\Poset)$ with $m\neq m'$.
\end{description}

We have the following connection.

\begin{theorem}[\cite{garver.mcconville:oriented_tree}*{Proposition~2.5}]\label{thm:congruence_uniform_labeling}
	A finite lattice is constructable by interval doublings if and only if it admits a CU-labeling.
\end{theorem}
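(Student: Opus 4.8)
The plan is to prove the two implications separately, in both cases by induction along a sequence of interval doublings, and to use throughout two structural facts valid in any finite lattice: perspective covering pairs generate the same lattice congruence, and in a polygonal interval generated by $p,p_{1},p_{2}$ the contraction of a covering pair lying strictly above $p$ on one side forces the contraction of both bottom pairs $(p,p_{1})$ and $(p,p_{2})$. Recall that a \emph{doubling congruence} of $\Lattice$ is a congruence $\theta$ whose nontrivial classes all have exactly two elements and project to an order-convex subset $X$ of $\Lattice/\theta$; by the construction in \cite{day:characterizations} one then has $\Lattice\cong(\Lattice/\theta)[X]$, and conversely every doubling $\Poset[X]$ carries the doubling congruence identifying $(x,1)$ with $(x,2)$ for $x\in X$. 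Thus a lattice is constructable by interval doublings exactly when it admits a chain of doubling congruences down to the singleton lattice, and the whole argument will translate between such chains and CU-labelings.

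For the forward implication I would induct on the number of doublings. The singleton lattice carries the empty labeling, which is vacuously a CU-labeling. For the inductive step write $\Lattice=\Lattice'[X]$, transport the CU-labeling of $\Lattice'$ to the copies of its edges in $\Lattice$, and assign one fresh label $\star$ — declared strictly above every old label — to each new covering pair $(x,1)\lessdot(x,2)$ with $x\in X$. I would then verify \ref{it:cu1}--\ref{it:cu5}. Polygonal intervals lying entirely in the transported copy of $\Lattice'$ inherit all five properties from the inductive hypothesis. Each remaining polygonal interval has one of the new pairs as a bottom edge; here \ref{it:cu1} records that the two new pairs across the polygon both receive $\star$; \ref{it:cu2} holds because $\star$ is maximal while the opposite side carries old, hence smaller, labels; \ref{it:cu3} holds since each side contains at most one $\star$-pair and its remaining labels are distinct by induction; and \ref{it:cu4}--\ref{it:cu5} hold because $\star$ is fresh while the old labels stay distinct by induction.

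For the converse, suppose $\Lattice$ admits a CU-labeling $\lambda$ into a poset $\Lambda$, and induct on $\lvert\Covers(\Lattice)\rvert$. Choose a label $\ell$ maximal among those occurring, and let $\theta$ be the equivalence relation contracting precisely the covering pairs labeled $\ell$. The crux is to show that $\theta$ is a doubling congruence. Property \ref{it:cu1} guarantees that $\ell$-pairs occur in perspective opposite pairs across polygons, so contracting them is already compatible with meets and joins and requires no further closure; maximality of $\ell$ together with \ref{it:cu2} prevents an $\ell$-label from ever occurring strictly inside a side of a polygon, confining the $\ell$-pairs to polygon bottoms and forcing the $\theta$-classes to have size at most two; and \ref{it:cu3}--\ref{it:cu5} ensure that distinct $\ell$-pairs do not share endpoints and that the collapsed elements project to an order-convex $X$ in $\Lattice/\theta$. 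Once $\theta$ is a doubling congruence, $\Lattice\cong(\Lattice/\theta)[X]$, the restriction of $\lambda$ to $\Lattice/\theta$ is a CU-labeling using strictly fewer labels, and the claim follows by induction.

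The step I expect to be hardest is the verification, in the converse direction, that the maximal-label contraction $\theta$ really is a doubling congruence. One must simultaneously show that $\theta$ is compatible with both lattice operations — which is precisely where \ref{it:cu1} is needed, to propagate each forced identification across a connecting chain of polygons and check that every cover it collapses is again an $\ell$-pair — and that the nontrivial classes are two-element and assemble into a single order-convex set, which is where \ref{it:cu2}, the maximality of $\ell$, and the injectivity built into \ref{it:cu4}--\ref{it:cu5} all enter at once. By contrast, in the forward direction the only nonroutine point is the geometry of the polygonal intervals created by a single doubling, and these are completely described by the product structure of $\Lattice'\times\mathbf{2}$, so they cause no real difficulty.
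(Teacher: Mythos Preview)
The paper does not prove this theorem; it is quoted from \cite{garver.mcconville:oriented_tree} and used as a black box, so there is no in-paper argument for you to compare against. Your overall strategy --- induct along a sequence of doublings in one direction, and peel off a maximal label to expose a doubling congruence in the other --- is indeed the standard one.

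However, your sketch rests on a false premise. You assert as a ``structural fact'' that contracting a covering pair lying strictly above $p$ on one side of a polygon forces the contraction of both bottom pairs $(p,p_{1})$ and $(p,p_{2})$. This holds for the top edges, but it fails for strictly interior ones: if $p<a\lessdot b<q$ on $C_{1}$, then $a\vee p_{2}=q=b\vee p_{2}$ and $a\wedge p_{2}=p=b\wedge p_{2}$, so identifying $a\sim b$ forces nothing at all. Consequently your central claim in the converse --- that \eqref{it:cu2} together with maximality of $\ell$ confines $\ell$-edges to polygon bottoms --- is wrong: \eqref{it:cu2} only says that interior labels dominate the two bottom labels, which is perfectly compatible with an interior label being maximal. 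Concretely, double the pentagon $N_{5}$ (with long side $0\lessdot b\lessdot c\lessdot 1$) at the one-element interval $\{b\}$: the resulting six-element lattice has a unique polygon, a hexagon, and the single new cover $(b,1)\lessdot(b,2)$ sits strictly in the interior of its long side. The CU-labeling that your own forward construction produces on this lattice places the fresh maximal label $\star$ on that interior edge --- so your forward case split (``each remaining polygonal interval has one of the new pairs as a bottom edge'') is also incomplete, though there the repair is routine since $\star$ being maximal still makes \eqref{it:cu2} hold. In the converse direction the gap is more serious: you still owe an argument that collapsing exactly the $\ell$-edges is a lattice congruence whose nontrivial blocks are two-element and project to an interval, and this has to come from the perspectivity encoded in \eqref{it:cu1} together with the uniqueness in \eqref{it:cu4}--\eqref{it:cu5}, not from a nonexistent interior-forcing principle.
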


\begin{remark}
	Lattices that are constructable by interval doublings have a huge significance in lattice theory.  By \cite{day:characterizations}, these are precisely the lattices which are bounded-homomorphic images of free lattices.  Another interesting property of such lattices is the fact that there is a bijection between join-irreducible elements and join-irreducible congruence relations.  For more details on these properties, we refer the interested reader to \cites{day:doubling,day:congruence,day:characterizations,freese:free}.
\end{remark}

\begin{remark}
	An edge labeling satisfying conditions \eqref{it:cu1}--\eqref{it:cu3} characterizes those lattices that are constructable by doublings by convex sets, see \cite{reading:lattice}*{Theorem~4}.
\end{remark}

Let us record two other properties of interval-constructable lattices.

\begin{theorem}[\cite{day:characterizations}*{Lemma~4.2}]\label{thm:congruence_uniform_semidistributive}
	If a finite lattice is constructable by interval doublings, then it is semidistributive.
\end{theorem}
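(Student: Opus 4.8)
The plan is to induct on the number of interval doublings. The single‑step claim to prove is: if $\Lattice=(L,\leq)$ is a finite semidistributive lattice and $C=[p,q]$ is an interval of $\Lattice$, then $\Lattice[C]$ is semidistributive; in fact join‑ and meet‑semidistributivity are preserved separately. Granting this, the theorem follows at once, since the singleton lattice is trivially semidistributive and, by~\cite{day:characterizations}, doubling a lattice by an interval again yields a lattice. So fix such $\Lattice$, $C$, write $\Lattice'\defs\Lattice[C]$ and let $\pi\colon\Lattice'\to\Lattice$, $(x,i)\mapsto x$. I would first record the structural inputs. Because $C$ is an \emph{interval}, $C$ is a sublattice of $\Lattice$ and $L_{\leq C}$ is the principal order ideal generated by $q$. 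Viewing $\Lattice'$ as an induced subposet of $\Lattice\times\mathbf{2}$, one checks the join formula $a\vee b=\bigl(\pi(a)\vee\pi(b),k\bigr)$ where $k=1$ exactly when both $a,b$ carry the label $1$ and $\pi(a)\vee\pi(b)\in L_{\leq C}$, and $k=2$ otherwise (in which case the label‑$2$ copy of $\pi(a)\vee\pi(b)$ does lie in $\Lattice'$); the meet formula is dual. In particular $\pi$ preserves both $\vee$ and $\wedge$, and the fibres of $\pi$ are the singletons together with the covering pairs $\{(x,1),(x,2)\}$, $x\in C$.

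For join semidistributivity of $\Lattice'$, take $a,b,c\in L'$ with $a\vee b=a\vee c\defs s$. Applying $\pi$ and using join semidistributivity of $\Lattice$ gives $\pi(a)\vee\bigl(\pi(b)\wedge\pi(c)\bigr)=\pi(s)$, hence $\pi\bigl(a\vee(b\wedge c)\bigr)=\pi(s)$; since also $a\vee(b\wedge c)\leq s$, the element $a\vee(b\wedge c)$ lies in the fibre of $\pi(s)$ and weakly below $s$. If that fibre is a singleton we are done, so suppose $\pi(s)=:z\in C$, the fibre is $\{(z,1)\lessdot(z,2)\}$, and---aiming for a contradiction---$s=(z,2)$ while $a\vee(b\wedge c)=(z,1)$. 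From $a\leq(z,1)$ we get $a=(\alpha,1)$ with $\alpha\leq z$; combining $s=(z,2)$ with $a\leq(z,1)$ forces $b\not\leq(z,1)$, hence $b=(\beta,2)$ with $\beta\leq q$, so $\beta\in C$; likewise $c=(\gamma,2)$ with $\gamma\in C$; and $\alpha\vee\beta=\alpha\vee\gamma=z$. Join semidistributivity of $\Lattice$ now gives $\alpha\vee(\beta\wedge\gamma)=z$, and since $C$ is a sublattice, $\delta\defs\beta\wedge\gamma\in C$, so by the meet formula $b\wedge c=(\delta,2)$. But then the join formula yields $a\vee(b\wedge c)=(\alpha,1)\vee(\delta,2)=(\alpha\vee\delta,2)=(z,2)=s$, contradicting $a\vee(b\wedge c)=(z,1)$. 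Meet semidistributivity of $\Lattice'$ is proven by the mirror argument, now invoking meet semidistributivity of $\Lattice$; the only structural facts used are once more that $C$ is a sublattice and that $L_{\leq C}$ is the principal ideal of $q$. This proves the single‑step claim, and the induction finishes the proof.

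The conceptual heart of the argument---and the point one must get right---is that \emph{interval} doubling, not merely convex‑set doubling, is what is being used: it is precisely the facts ``$L_{\leq[p,q]}$ is the principal ideal of $q$'' and ``$[p,q]$ is a sublattice'' that close the bad case, and indeed doubling by a convex set that fails to be an interval need not preserve semidistributivity. The remaining obstacle is purely the bookkeeping behind the join and meet formulas in $\Lattice[C]\subseteq\Lattice\times\mathbf{2}$: this is routine but must be done carefully, since $\Lattice[C]$ is in general neither a meet‑ nor a join‑subsemilattice of $\Lattice\times\mathbf{2}$.
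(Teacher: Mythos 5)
Your argument is correct, but note that the paper does not prove this statement at all: it is imported from Day~\cite{day:characterizations}*{Lemma~4.2} as a black box. What you give is a self-contained substitute, namely an induction on the number of doublings whose single step asserts that doubling a finite semidistributive lattice $\Lattice$ by an interval $C=[p,q]$ preserves (join and meet) semidistributivity. Your verification of that step is sound: with the paper's presentation of $\Lattice[C]$ inside $\Lattice\times\mathbf{2}$, the projection $\pi$ is indeed a surjective lattice homomorphism whose only non-singleton fibres are the two-element chains over elements of $C$ (your join formula is right, although the clause ``$\pi(a)\vee\pi(b)\in L_{\leq C}$'' is automatic once both labels are $1$), and in the only problematic configuration $s=(z,2)$, $a\vee(b\wedge c)=(z,1)$ your label bookkeeping plus the two facts you isolate --- $[p,q]$ is a sublattice and $L_{\leq[p,q]}$ is the principal ideal of $q$ --- does produce the contradiction. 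The one place that deserves more than the phrase ``mirror argument'' is meet semidistributivity: since the presentation is asymmetric (the ideal carries label $1$, its complement together with $C$ carries label $2$), the meet of two label-$2$ elements can drop to label $1$, so the two halves are not literally dual in this model. Either check it directly (it is in fact easier: in the bad case $s=(z,1)$, $a\wedge(b\vee c)=(z,2)$, the element $a$ must carry label $2$, which forces $b$ and $c$ to carry label $1$, hence $b\vee c$ and then $a\wedge(b\vee c)$ carry label $1$, a contradiction; meet semidistributivity of $\Lattice$ is needed only to place $a\wedge(b\vee c)$ in the correct fibre), or invoke the isomorphism $\Lattice[C]^{d}\cong\Lattice^{d}[C^{d}]$. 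With that sentence added, your proof is a valid, more elementary replacement for the citation, making the derivation of semidistributivity of $\Bub(m,n)$ via Theorem~\ref{thm:bubble_congruence_uniform} self-contained; the cost is only the routine verification of the join and meet formulas in $\Lattice[C]$, which you rightly flag as the main bookkeeping burden.
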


% Let us record another property of interval-constructable lattices.

\begin{lemma}[\cite{garver.mcconville:oriented_tree}*{Lemma~2.6}]\label{lem:cu_labeling_perspective}
	Let $\Lattice=(L,\leq)$ be an interval-constructable lattice with a surjective CU-labeling $\lambda\colon\Covers(\Lattice)\to\Poset$ for some poset $\Poset=(P,\preceq)$.  For every $s\in P$ there exists a unique $j\in\JI(\Lattice)$ and a unique $m\in\MI(\Lattice)$ such that $\lambda(j_{*},j)=s=\lambda(m,m^{*})$.  Moreover, for every $(p,q)\in\Covers(\Lattice)$ it holds that $\lambda(p,q)=s$ if and only if $(p,q)$ is perspective with $(j_{*},j)$ and $(m,m^{*})$.
\end{lemma}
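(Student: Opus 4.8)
The plan is to reduce the statement to the canonical join- and meet-representation labelings afforded by semidistributivity, using Lemma~\ref{lem:jsd_labeling_perspective} as the bridge. Write $\Poset=(P,\preceq)$ for the label poset. Uniqueness is immediate from the axioms: \eqref{it:cu4} says that $j\mapsto\lambda(j_{*},j)$ is injective on $\JI(\Lattice)$, and \eqref{it:cu5} says the same for $m\mapsto\lambda(m,m^{*})$ on $\MI(\Lattice)$, so for a fixed $s$ there is at most one join-irreducible and at most one meet-irreducible with the asserted property.

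Since $\Lattice$ is constructable by interval doublings it is semidistributive by Theorem~\ref{thm:congruence_uniform_semidistributive}, hence both $\Lattice$ and $\Lattice^{d}$ admit canonical join representations (Theorem~\ref{thm:jsd_characterization}); this gives the labeling $\jsdlabeling\colon\Covers(\Lattice)\to\JI(\Lattice)$ and, applying the same recipe to $\Lattice^{d}$, a labeling $\mu\colon\Covers(\Lattice)\to\MI(\Lattice)$. By Lemma~\ref{lem:jsd_labeling_perspective} and its dual, $\jsdlabeling(p,q)=j$ if and only if $(p,q)$ and $(j_{*},j)$ are perspective, and $\mu(p,q)=m$ if and only if $(p,q)$ and $(m,m^{*})$ are perspective (perspectivity being self-dual). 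Taking $(p,q)=(j_{*},j)$, which is perspective with itself, yields $\jsdlabeling(j_{*},j)=j$ for every $j\in\JI(\Lattice)$, and dually $\mu(m,m^{*})=m$.

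The crux is to prove that a CU-labeling is constant on perspective covering pairs: if $q\vee p'=q'$ and $q\wedge p'=p$, then $\lambda(p,q)=\lambda(p',q')$. I would argue this by restricting $\lambda$ to the interval $[p,q']$ — on which it is again a CU-labeling, since \eqref{it:cu1}--\eqref{it:cu5} are local conditions — and expressing the perspectivity as a chain of elementary perspectivities, each one inside a polygonal subinterval, so that \eqref{it:cu1} transports the label across every link, with \eqref{it:cu3} keeping track of repeated labels along the chains. Producing these polygonal witnesses and checking the chaining is the step I expect to be the main obstacle. A congruence-theoretic alternative: any perspectivity forces $\mathrm{con}(p,q)=\mathrm{con}(p',q')$, where $\mathrm{con}(x,y)$ denotes the least congruence collapsing the edge $(x,y)$, while \eqref{it:cu4}--\eqref{it:cu5} together with the bijection between join-irreducibles and join-irreducible congruences recorded in the Remark after Theorem~\ref{thm:congruence_uniform_labeling} force the $\lambda$-classes of $\Covers(\Lattice)$ to coincide with its $\mathrm{con}$-classes; the claim then follows as well.

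Granting the crux, fix $s\in P$. Surjectivity of $\lambda$ produces an edge $(p_{0},q_{0})$ with $\lambda(p_{0},q_{0})=s$; set $j\defs\jsdlabeling(p_{0},q_{0})\in\JI(\Lattice)$ and $m\defs\mu(p_{0},q_{0})\in\MI(\Lattice)$. By the previous paragraph $(p_{0},q_{0})$ is perspective with both $(j_{*},j)$ and $(m,m^{*})$, so $\lambda(j_{*},j)=s=\lambda(m,m^{*})$; together with uniqueness this gives the first assertion. For the equivalence, if $(p,q)$ is perspective with $(j_{*},j)$ then $\lambda(p,q)=\lambda(j_{*},j)=s$ by the crux, and symmetrically for $(m,m^{*})$. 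Conversely, if $\lambda(p,q)=s$, put $j'\defs\jsdlabeling(p,q)$; then $(p,q)$ is perspective with $(j'_{*},j')$, so $\lambda(j'_{*},j')=s=\lambda(j_{*},j)$, whence $j'=j$ by \eqref{it:cu4} and $(p,q)$ is perspective with $(j_{*},j)$; arguing dually with $\mu$ and \eqref{it:cu5} shows $(p,q)$ is also perspective with $(m,m^{*})$. Thus $\lambda(p,q)=s$ is equivalent to each perspectivity statement separately, and hence to their conjunction, which is exactly the assertion of the lemma.
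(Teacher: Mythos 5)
This lemma is not proven in the paper at all: it is quoted verbatim from Garver--McConville \cite{garver.mcconville:oriented_tree}*{Lemma~2.6}, so your attempt has to be judged against what a self-contained proof requires rather than against an argument in the text. Your surrounding bookkeeping is correct: uniqueness follows from \eqref{it:cu4} and \eqref{it:cu5}, semidistributivity comes from Theorem~\ref{thm:congruence_uniform_semidistributive}, Lemma~\ref{lem:jsd_labeling_perspective} and its dual give $\jsdlabeling(j_{*},j)=j$ and $\mu(m,m^{*})=m$, and the final equivalence-chasing (existence via surjectivity, the three statements being pairwise equivalent) is fine \emph{granted} your ``crux'' that a CU-labeling takes the same value on perspective covering pairs. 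But that crux is the whole substance of the lemma, and you do not prove it; you explicitly flag it as the main obstacle, which makes the proposal incomplete rather than a proof.

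Both of your suggested routes to the crux have real holes. For the first: \eqref{it:cu1}--\eqref{it:cu3} are conditions on polygonal intervals and hence restrict to $[p,q']$, but \eqref{it:cu4} and \eqref{it:cu5} are global conditions on all of $\JI(\Lattice)$ and $\MI(\Lattice)$, and the join- and meet-irreducibles of an interval are not those of $\Lattice$; so ``the restriction is again a CU-labeling'' is not automatic. More importantly, decomposing an arbitrary perspectivity into a chain of polygonal ones and transporting the label along it via \eqref{it:cu1} is exactly the induction (on the length of the ambient interval) that constitutes the published proof --- it is the missing idea, not a routine verification. The congruence-theoretic alternative is circular in a similar way: perspectivity does force $\mathrm{con}(p,q)=\mathrm{con}(p',q')$, but to conclude that the $\lambda$-classes of $\Covers(\Lattice)$ coincide with the $\mathrm{con}$-classes you would need to know that every label $s\in P$ already occurs on some irreducible edge $(j_{*},j)$ --- and that is the first assertion of the lemma, which \eqref{it:cu4}--\eqref{it:cu5} alone (even combined with the bijection between join-irreducibles and join-irreducible congruences) do not deliver without the very transport-of-labels statement you are trying to establish.
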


\begin{corollary}\label{cor:cu_jsd_labeling}
    Any CU-labeling of a interval-constructable lattice is combinatorially equivalent to $\jsdlabeling$.
\end{corollary}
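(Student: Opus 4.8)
The plan is to unwind the definition of combinatorial equivalence of edge labelings: two labelings of $\Lattice$ are combinatorially equivalent exactly when they induce the same partition of $\Covers(\Lattice)$ into fibers. So, given a CU-labeling $\lambda\colon\Covers(\Lattice)\to\Poset$, I will produce a bijection $\phi$ from the labels actually used by $\lambda$ onto $\JI(\Lattice)$ and show that $\jsdlabeling=\phi\circ\lambda$; this forces $\lambda(p,q)=\lambda(p',q')$ if and only if $\jsdlabeling(p,q)=\jsdlabeling(p',q')$, which is the assertion. Two harmless preliminary reductions: first, we may assume $\lambda$ is surjective onto $\Poset$, since restricting the codomain to the image affects neither hypothesis nor conclusion; second, since $\Lattice$ is interval-constructable it is semidistributive by Theorem~\ref{thm:congruence_uniform_semidistributive}, in particular join-semidistributive, so $\jsdlabeling$ is a genuine edge labeling with values in $\JI(\Lattice)$ and Lemma~\ref{lem:jsd_labeling_perspective} is available.

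For the main step, apply Lemma~\ref{lem:cu_labeling_perspective}: for each $s\in P$ there is a unique $j\in\JI(\Lattice)$ with $\lambda(j_{*},j)=s$, and I set $\phi(s)\defs j$. The uniqueness clause of that lemma (equivalently, condition \eqref{it:cu4}) makes $\phi$ injective, and it is surjective because $\phi\bigl(\lambda(j_{*},j)\bigr)=j$ for every $j\in\JI(\Lattice)$. Now take any edge $(p,q)\in\Covers(\Lattice)$, write $s\defs\lambda(p,q)$ and $j\defs\phi(s)$. Lemma~\ref{lem:cu_labeling_perspective} tells us $(p,q)$ is perspective with $(j_{*},j)$, and Lemma~\ref{lem:jsd_labeling_perspective} then gives $\jsdlabeling(p,q)=j=\phi\bigl(\lambda(p,q)\bigr)$. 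Thus $\jsdlabeling=\phi\circ\lambda$ with $\phi$ a bijection $P\to\JI(\Lattice)$, and the corollary follows.

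There is no real obstacle here: all the substance sits in Lemmas~\ref{lem:cu_labeling_perspective} and~\ref{lem:jsd_labeling_perspective}, which both describe a label class as the set of covering pairs perspective to the bottom edge $(j_{*},j)$ of a fixed join-irreducible. The only point requiring a moment of care is to check that the join-irreducible $j$ that Lemma~\ref{lem:cu_labeling_perspective} attaches to a label $s$ is exactly the one $\jsdlabeling$ places on every edge of that class, and this is immediate once the two perspectivity characterizations are set side by side. It is worth stating explicitly at the outset what \emph{combinatorially equivalent} means, so that the factorization $\jsdlabeling=\phi\circ\lambda$ through a bijection is recognized as literally the definition; with that convention in place the argument is a short diagram chase.
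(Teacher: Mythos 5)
Your proof is correct and follows the paper's own argument: the paper likewise invokes Theorem~\ref{thm:congruence_uniform_semidistributive} to make $\jsdlabeling$ well defined, and then cites Lemmas~\ref{lem:jsd_labeling_perspective} and~\ref{lem:cu_labeling_perspective} to conclude that both labelings are determined by perspectivity with the edges $(j_{*},j)$. Your write-up merely makes explicit the bijection $\phi$ and the factorization $\jsdlabeling=\phi\circ\lambda$ that the paper leaves implicit.
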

\begin{proof}
    By Theorem~\ref{thm:congruence_uniform_semidistributive}, every interval-constructable lattice is semidistributive so that $\jsdlabeling$ is well defined.  Now, Lemmas~\ref{lem:jsd_labeling_perspective} and \ref{lem:cu_labeling_perspective} imply that $\jsdlabeling$ and any CU-labeling are determined by the perspectivity relation.
\end{proof}

\subsubsection{Trim Lattices}

If $\Lattice$ has length $k$, then it is straightforward to verify that 
\begin{equation}\label{eq:lattice_length_irreds}
	k\leq\min\bigl\{\bigl\lvert\JI(\Lattice)\bigr\rvert,\bigl\lvert\MI(\Lattice)\bigr\rvert\bigr\}.
\end{equation}
In the case where $\bigl\lvert\JI(\Lattice)\bigr\rvert=k=\bigl\lvert\MI(\Lattice)\bigr\rvert$, $\Lattice$ is called \defn{extremal}~\cite{markowsky:primes}.  By \cite{markowsky:primes}*{Theorem~14(ii)}, every finite lattice can be embedded as an interval in a finite extremal lattice, which implies that extremality is not inherited by intervals.  In \cite{thomas:analogue}, a strengthening of extremality was introduced which does possess this hereditary property.

An element $p\in L$ is \defn{left modular} if for any $r<q$ it holds that
\begin{displaymath}
	(r\vee p)\wedge q = r\vee(p\wedge q).
\end{displaymath}
If $\Lattice$ has length $k$, and possesses a maximal chain of length $k$ comprised entirely of left-modular elements, then $\Lattice$ is itself \defn{left modular}.  A lattice that is both extremal and left modular is \defn{trim}.  

While there is no general relation between the classes of semidistributive and trim lattices, there is a remarkable connection.  

\begin{theorem}[\cite{thomas:rowmotion}*{Theorem~1.4}]\label{thm:semidistributive_extremal_trim}
	Every semidistributive, extremal lattice is trim.
\end{theorem}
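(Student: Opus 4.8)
\emph{Proof strategy.} Since a lattice is trim exactly when it is both extremal and left modular, and extremality is part of the hypothesis, the task reduces to exhibiting a maximal chain of length $k\defs\len(\Lattice)$ all of whose elements are left modular. The natural candidate is a chain witnessing extremality: by Markowsky's structure theory for extremal lattices~\cite{markowsky:primes}, fix a maximum-length chain $C\colon\hat{0}=c_{0}\lessdot c_{1}\lessdot\cdots\lessdot c_{k}=\hat{1}$, and recall that there are bijections $[k]\to\JI(\Lattice)$, $i\mapsto j_{i}$, and $[k]\to\MI(\Lattice)$, $i\mapsto m_{i}$, determined by $j_{i}\leq c_{i}$ with $j_{i}\not\leq c_{i-1}$ (and dually for $m_{i}$), and satisfying $c_{i}=j_{1}\vee\cdots\vee j_{i}=m_{i+1}\wedge\cdots\wedge m_{k}$ for all $i$. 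The plan is to prove that $C$ is a left-modular chain.

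The first step is to read off the edge labels along $C$. As $\Lattice$ is join semidistributive, $\jsdlabeling$ is well defined with image in $\JI(\Lattice)$. From $j_{i}\not\leq c_{i-1}$ and $c_{i-1}\lessdot c_{i}$ one gets $c_{i-1}\vee j_{i}=c_{i}$, hence $\jsdlabeling(c_{i-1},c_{i})\leq j_{i}$; and the only join-irreducible $j_{\ell}$ with $j_{\ell}\leq j_{i}$ and $j_{\ell}\not\leq c_{i-1}$ is $j_{i}$ itself (compare $\ell$ with $i$ using $j_{\ell}\leq c_{\ell}$ together with the defining property of the $j_{\bullet}$), so $\jsdlabeling(c_{i-1},c_{i})=j_{i}$. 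Dually, $\Lattice^{d}$ is again extremal and semidistributive with the reverse of $C$ as a maximum chain, so the corresponding meet-version of the labeling assigns $m_{i}$ to the $i$-th edge of $C$. Thus along $C$ the labeling $\jsdlabeling$ lists $\JI(\Lattice)$ without repetition in the chain order, its meet-dual lists $\MI(\Lattice)$ likewise, and the $i$-th edge of $C$ simultaneously carries $j_{i}$ and $m_{i}$; the two bijections are indexed compatibly precisely because $c_{i}$ is at once $j_{1}\vee\cdots\vee j_{i}$ and $m_{i+1}\wedge\cdots\wedge m_{k}$.

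It remains to upgrade this to left modularity of $C$, and this is where I would invoke the edge-labeling characterization of left-modular maximal chains: such a chain is recognized by the fact that a suitable edge labeling of the whole lattice restricts to a repetition-free listing along the chain and orders the labels on every other maximal chain compatibly with $C$. The perspectivity description of $\jsdlabeling$ from Lemma~\ref{lem:jsd_labeling_perspective} --- an edge $(p,q)$ carries label $j$ iff $(p,q)$ is perspective to $(j_{*},j)$ --- together with its meet-dual and the pairing $j_{i}\leftrightarrow m_{i}$ from the previous step, supplies exactly this global consistency; this is the same mechanism as in Lemma~\ref{lem:cu_labeling_perspective}, which cannot be cited directly since $\Lattice$ need not be interval-constructable, but whose argument adapts because semidistributivity alone already yields two-sided perspectivity control of the labels. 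Granting this, $C$ is left modular, hence $\Lattice$ is left modular, and being also extremal it is trim.

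The main obstacle is precisely this last step: passing from the local, perspectivity-level behaviour of $\jsdlabeling$ to the assertion that every $c_{i}$ is left modular, which is a condition quantified over all pairs $r<q$ in $\Lattice$ rather than merely over the covers on $C$. This needs either a genuine structural characterization of left-modular chains via edge labelings or a careful induction, not a one-line verification. A secondary subtlety is making sure the join-side and meet-side bijections $i\mapsto j_{i}$ and $i\mapsto m_{i}$ line up under the same index; here one must use that $c_{i}$ is \emph{simultaneously} the join of the first $i$ join-irreducibles and the meet of the last $k-i$ meet-irreducibles, and that along a maximum chain of a semidistributive lattice the a priori independent join- and meet-labelings become linked.
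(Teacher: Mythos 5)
There is a genuine gap, and you name it yourself. First, note that the paper does not prove this statement at all: it is imported from \cite{thomas:rowmotion}*{Theorem~1.4}, so your argument has to stand on its own. The preparatory part does: reducing trimness to exhibiting a left-modular chain of maximum length, fixing the Markowsky indexing $c_{i}=j_{1}\vee\cdots\vee j_{i}=m_{i+1}\wedge\cdots\wedge m_{k}$, and the computation $\jsdlabeling(c_{i-1},c_{i})=j_{i}$ (with the dual statement for $m_{i}$) are all correct, using that join semidistributivity forces $c_{i-1}\vee\jsdlabeling(c_{i-1},c_{i})=c_{i}$ and that extremality admits exactly one new join-irreducible per step of the chain. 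But all of this is bookkeeping; none of it touches left modularity.

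The missing step is precisely the content of the theorem: showing that each $c_{i}$ satisfies $(r\vee c_{i})\wedge q=r\vee(c_{i}\wedge q)$ for \emph{all} $r<q$ in the lattice. You propose to get this from an unstated ``edge-labeling characterization of left-modular maximal chains'' together with the perspectivity description of $\jsdlabeling$ (Lemma~\ref{lem:jsd_labeling_perspective}), arguing that the mechanism of Lemma~\ref{lem:cu_labeling_perspective} ``adapts'' without interval-constructability. This is not a proof: the labeling criteria for left modularity in the literature (interpolating labelings in the sense of McNamara and Thomas) impose conditions on every interval of the lattice that do not follow from perspectivity of labels alone, and you neither state such a criterion precisely nor verify its hypotheses for $\jsdlabeling$ --- doing so is essentially equivalent to the theorem. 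In the actual argument of Thomas and Williams, the substance lies in exploiting semidistributivity through the $\kappa$-pairing between join- and meet-irreducibles, showing it matches the indexing $j_{i}\leftrightarrow m_{i}$ along the extremal chain, and then deducing left modularity of each $c_{i}$ from that pairing; your sketch stops exactly where this work begins. So the proposal is a correct reduction with the decisive implication left unproved.
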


This result is extremely useful because it is often much simpler to establish extremality than it is to establish left-modularity.  Recently, the converse of Theorem~\ref{thm:semidistributive_extremal_trim} was proven in \cite{muehle:extremality}.

\subsection{Shuffle Words}

For nonnegative integers $m$ and $n$, we consider two disjoint sets of letters: 
\begin{displaymath}
	X = \{x_{1},x_{2},\ldots,x_{m}\}\quad\text{and}\quad Y=\{y_{1},y_{2},\ldots,y_{n}\}.
\end{displaymath}
A word over $X\cup Y$ is \defn{simple} if it does not contain duplicate letters. The \defn{support} of a word is the set of letters it contains. We may apply set-theoretic operators to a word $\wbf$ without explicitly referring to its support. For example, for any word $\wbf$ and any letter $w\in X\cup Y$, we write ``$w\in\wbf$'' as an abbreviation for ``$w$ is in the support of $\wbf$''. The empty word is denoted by $\epsilon$ and we usually (at least in the examples) write the letters of $\xbf$ in red and the letters of $\ybf$ in blue. 

A \defn{subword} of a simple word $\wbf=w_{1}w_{2}\cdots w_{k}$ is any word of the form $w_{i_{1}}w_{i_{2}}\cdots w_{i_{l}}$ with $i_{1}<i_{2}<\cdots<i_{l}$.  For $i\in[k]\defs\{1,2,\ldots,k\}$ we write $\wbf_{\hat{\imath}}$ for the subword of $\wbf$ obtained by deleting the letter $w_{i}$.  

If $\ubf,\vbf$ are simple words, then the \defn{restriction} of $\ubf$ to $\vbf$, denoted by $\ubf_{\vbf}$, is the subword of $\ubf$ formed by the common letters of $\ubf$ and $\vbf$.  For instance, if $\ubf=\rd{x_1}\bl{y_1}\rd{x_2x_3}\bl{y_3}$ and $\vbf=\rd{x_3}\bl{y_1}\rd{x_4}$, then the restriction of $\ubf$ by $\vbf$ is $\ubf_{\vbf} = \bl{y_1}\rd{x_3}$.

Our main interest lies in \emph{order-preserving} simple words.  That means, if we define 
\begin{displaymath}
	\xbf\defs x_{1}x_{2}\cdots x_{m}\quad\text{and}\quad\ybf\defs y_{1}y_{2}\cdots y_{n},
\end{displaymath}
then we consider the set of simple words $\wbf$ with the property that $\wbf_{\xbf}$ is a subword of $\xbf$ and $\wbf_{\ybf}$ is a subword of $\ybf$.  We call such words \defn{shuffle words} of $\xbf$ and $\ybf$, and we write $\Shuf(m,n)$ for the set of all shuffle words.  It is easy to see that the number of shuffle words depends only on the cardinalities of $X$ and $Y$, and not on the concrete elements of $X$ and $Y$.

Let $\ubf=u_{1}u_{2}\cdots u_{k}\in\Shuf(m,n)$.  Lemma~4.6 in \cite{greene:shuffle} states that $\ubf$ is uniquely determined by its \defn{interface}, \ie the set of letters $x\in X$, $y\in Y$ for which there exists $i\in[k-1]$ such that $u_{i}=y$ and $u_{i+1}=x$, and its \defn{residue}, \ie the letters of $\ubf$ which are not in the interface.  This motivates the following two operations on $\Shuf(m,n)$.  

An \defn{indel} is a relation $\ubf\indel\ubf_{\hat{\imath}}$ if $u_{i}\in X$ or $\ubf_{\hat{\imath}}\indel\ubf$ if $u_{i}\in Y$.  In other words, an indel of $\ubf$ is a shuffle word obtained from $\ubf$ by either \textbf{in}serting an element of $Y$ or \textbf{del}eting an element of $X$.  A \defn{(forward) transposition} swaps two letters $u_{i}$ and $u_{i+1}$ if $u_{i}\in X$ and $u_{i+1}\in Y$.
In this situation, we write $\ubf\transpose\ubf'$, where $\ubf'=u_{1}u_{2}\cdots u_{i+1}u_{i}\cdots u_{k}$.  

Drawing inspiration from the analogous situation for permutations, we define the \defn{inversion set} of $\ubf$ by
\begin{displaymath}
	\invset(\ubf) \defs \bigl\{(x_{s},y_{t})\mid\;\text{there exist}\;i<j\;\text{such that}\;u_{i}=y_{t}\;\text{and}\;u_{j}=x_{s}\bigr\}.
\end{displaymath}
The inversion set provides a little bit more information than the interface, because it exactly locates the inversions.  However, knowing interface and residue enables us to compute the inversion set.  Table~\ref{tab:inversions_21} lists the elements of $\Shuf(2,1)$ together with their inversion sets.

\begin{table}
    \centering
    \begin{tabular}{r|l}
        $\ubf\in\Shuf(2,1)$ & $\invset(\ubf)$\\
        \hline\hline
        $\epsilon$ & $\emptyset$\\
        $\rd{x_1}$ & $\emptyset$\\
        $\rd{x_2}$ & $\emptyset$\\
        $\bl{y_1}$ & $\emptyset$\\
        $\rd{x_1x_2}$ & $\emptyset$\\
        $\rd{x_1}\bl{y_1}$ & $\emptyset$\\
        $\rd{x_2}\bl{y_1}$ & $\emptyset$\\
        $\bl{y_1}\rd{x_1}$ & $\bigl\{(\rd{x_1},\bl{y_1})\bigr\}$\\
        $\bl{y_1}\rd{x_2}$ & $\bigl\{(\rd{x_2},\bl{y_1})\bigr\}$\\
        $\rd{x_1x_2}\bl{y_1}$ & $\emptyset$\\
        $\rd{x_1}\bl{y_1}\rd{x_2}$ & $\bigl\{(\rd{x_2},\bl{y_1})\bigr\}$\\
        $\bl{y_1}\rd{x_1x_2}$ & $\bigl\{(\rd{x_1},\bl{y_1}),(\rd{x_2},\bl{y_1})\bigr\}$\\
    \end{tabular}
    \caption{The elements of $\Shuf(2,1)$ together with their inversion sets.}
    \label{tab:inversions_21}
\end{table}

Now finally, we use indels and transpositions to define two partial orders on $\Shuf(m,n)$.  The \defn{shuffle order}, denoted by $\shufleq$, is the reflexive and transitive closure of indels, and the \defn{bubble order}, denoted by $\bubleq$, is the reflexive and transitive closure of indels and transpositions\footnote{The name ``bubble order'' is to emphasize that this is an order extension of the shuffle order in which we have to bubble sort the words before we can perform an indel.}.  We write $\ShufPoset(m,n)\defs\bigl(\Shuf(m,n),\shufleq\bigr)$ and $\Bub(m,n)\defs\bigl(\Shuf(m,n),\bubleq\bigr)$ for the corresponding partially ordered sets (\defn{posets}).  Figure~\ref{fig:shuffle_21} shows $\ShufPoset(2,1)$ and Figure~\ref{fig:bubble_21} shows $\Bub(2,1)$.  The poset $\ShufPoset(m,n)$ was intensively studied in \cite{greene:shuffle}, while the poset $\Bub(m,n)$ is new.  The main purpose of this article is to exhibit several remarkable structural and enumerative correspondences between these two posets.

\section{The Order and Cover Relation of $\Bub(m,n)$}

In this section we characterize order relation of $\Bub(m,n)$ and deduce a characterization of its covering pairs.  We then derive some first enumerative properties. %\henri{I'm moving the definition of the noncrossing matching complex to Section~\ref{sec:bubble_secondary}.}
% 
% \subsection{Covering Pairs}
% 
The shuffle order was defined in \cite{greene:shuffle} by the following conditions: two shuffle words $\ubf,\vbf\in\Shuf(m,n)$ satisfy $\ubf\shufleq\vbf$ if and only if 
\begin{itemize}
	\item $\vbf_{\xbf}$ is a subword of $\ubf_{\xbf}$,
	\item $\ubf_{\ybf}$ is a subword of $\vbf_{\ybf}$,
	\item $\ubf_{\vbf}=\vbf_{\ubf}$.
\end{itemize}
We now give an analogous characterization of the bubble order.

\begin{lemma}\label{lem:bubble_order}
	Let $\ubf,\vbf\in\Shuf(m,n)$.  Then, $\ubf\bubleq\vbf$ if and only if:
	\begin{itemize}
		\item $\vbf_{\xbf}$ is a subword of $\ubf_{\xbf}$,
		\item $\ubf_{\ybf}$ is a subword of $\vbf_{\ybf}$, and 
		\item $\invset(\ubf_{\vbf})\subseteq\invset(\vbf_{\ubf})$.
	\end{itemize}
\end{lemma}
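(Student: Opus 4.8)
The strategy is to prove both directions by induction, leaning on the fact that $\bubleq$ is generated by indels and transpositions, and that the shuffle order $\shufleq$ already has the analogous characterization (the first two bullets are identical, so the content is in trading ``$\ubf_{\vbf}=\vbf_{\ubf}$'' for ``$\invset(\ubf_{\vbf})\subseteq\invset(\vbf_{\ubf})$''). First I would record that all three stated conditions are preserved under composition: if they hold for $\ubf,\vbf$ and for $\vbf,\wbf$, then they hold for $\ubf,\wbf$. For the subword conditions this is transitivity of ``is a subword of'', while for the inversion condition one checks that the common-letter restrictions compose correctly — the key point being that an inversion $(x_s,y_t)$ present in $\ubf$ among letters surviving into $\wbf$ must, by the intermediate hypotheses, already be an inversion in $\vbf$ and hence in $\wbf$. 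This reduces the ``only if'' direction to verifying the three conditions for a single indel $\ubf\indel\ubf_{\hat\imath}$ (with $u_i\in X$), its reverse (with $u_i\in Y$), and a single transposition $\ubf\transpose\ubf'$. Each of these is a short direct check: deleting an $x$ shrinks $\ubf_{\xbf}$ to a subword, leaves $\ubf_{\ybf}$ alone, and can only remove inversions from the restriction; a transposition $u_iu_{i+1}=x_sy_t\mapsto y_tx_s$ keeps both $\xbf$- and $\ybf$-restrictions equal and adds the single inversion $(x_s,y_t)$, so $\invset(\ubf_{\ubf'})=\invset(\ubf)\subseteq\invset(\ubf)\cup\{(x_s,y_t)\}=\invset(\ubf'_{\ubf})$.

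For the ``if'' direction — the substantive half — I assume $\ubf,\vbf$ satisfy the three conditions and must build a sequence of indels and transpositions from $\ubf$ to $\vbf$. The plan is to factor the passage through an intermediate word: let $\wbf$ be the shuffle word whose $\xbf$-part is $\vbf_{\xbf}$ and whose $\ybf$-part is $\ubf_{\ybf}$, arranged so that $\wbf$ has the ``smallest'' inversion set compatible with going up toward $\vbf$ — concretely, take $\wbf_{\ubf}=\ubf_{\vbf}$ as a word (the common letters in the order they appear in $\ubf$) and then note $\ubf\bubleq\wbf$ is witnessed purely by deletions of $X$-letters and insertions of $Y$-letters, i.e. by indels alone, so that $\ubf\shufleq\wbf$ already by Greene's characterization once one observes $\ubf_{\wbf}=\wbf_{\ubf}$. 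It then remains to show $\wbf\bubleq\vbf$, where now $\wbf$ and $\vbf$ have the \emph{same support} (both restrict to $\vbf_{\xbf}$ on $X$ and to $\ubf_{\ybf}$ on $Y$ — wait, one must be careful: $\vbf_{\ybf}$ may be strictly larger than $\ubf_{\ybf}$). So more precisely I would first do all insertions of the $Y$-letters in $\vbf_{\ybf}\setminus\ubf_{\ybf}$ and all deletions of the $X$-letters in $\ubf_{\xbf}\setminus\vbf_{\xbf}$ by indels — each such indel is available because it only changes the word on letters outside the common support $\ubf_{\vbf}$, hence cannot create or destroy inversions among common letters — reducing to the case where $\ubf$ and $\vbf$ have equal support $S$. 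In that case the three conditions say exactly $\invset(\ubf)\subseteq\invset(\vbf)$ as subsets of $S_X\times S_Y$, and I claim $\ubf$ can be transformed to $\vbf$ by transpositions alone, increasing the inversion set one pair at a time: pick any $(x_s,y_t)\in\invset(\vbf)\setminus\invset(\ubf)$ minimal in a suitable sense; because it is an inversion in $\vbf$ but not in $\ubf$, in $\ubf$ the letter $x_s$ precedes $y_t$, and one argues there is an adjacent such pair that may be transposed, strictly enlarging $\invset(\ubf)$ while keeping it contained in $\invset(\vbf)$ and keeping $\ubf_{\xbf},\ubf_{\ybf}$ fixed; iterate until the inversion sets agree, at which point $\ubf=\vbf$ by the interface/residue uniqueness (Lemma~4.6 of \cite{greene:shuffle}).

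The main obstacle is precisely this last ``adjacent transposable pair'' step: having a desired inversion $(x_s,y_t)$ absent from $\ubf$ does not immediately furnish an adjacent $XY$-descent $u_iu_{i+1}=x_{s'}y_{t'}$ that moves us closer to $\vbf$ — one must choose the pair carefully so that transposing it does not overshoot (create an inversion not in $\invset(\vbf)$). I expect the right choice is to pick $(x_s, y_t) \in \invset(\vbf)\setminus\invset(\ubf)$ with $y_t$ as far right as possible in $\ubf$ (equivalently $t$ maximal, then $s$ maximal), and then to transpose the adjacent pair immediately to the left of that occurrence of $y_t$; verifying that the letter immediately to the left is indeed some $x_{s'}$ with $(x_{s'},y_t)\in\invset(\vbf)$, and that no bad inversion is introduced, is the technical heart. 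An alternative, possibly cleaner route is to induct directly on $\lvert\invset(\vbf)\rvert-\lvert\invset(\ubf)\rvert$ together with $\len\Bub(m,n)$, using the cover relation of $\Bub(m,n)$ once it is characterized, but since the cover relation is derived \emph{after} this lemma, the self-contained transposition argument above is the one I would write out.
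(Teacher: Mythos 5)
Your ``only if'' direction is correct and is essentially the paper's argument (induct along a witnessing chain of indels and transpositions; your composability observation is exactly what the paper checks step by step), and the architecture of your ``if'' direction also matches the paper's: equalize supports by indels, then finish inside the fixed-support set by transpositions. However, the reduction to equal supports has a genuine gap. Your justification that inserting the letters of $\vbf_{\ybf}\setminus\ubf_{\ybf}$ ``only changes the word on letters outside the common support $\ubf_{\vbf}$, hence cannot create or destroy inversions among common letters'' is not enough for what you use afterwards: once $y_{j}$ is inserted it \emph{is} a common letter of the intermediate word and $\vbf$, and the inversions it forms with the surviving $X$-letters depend on the insertion position. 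For a bad choice the containment $\invset(\ubf')\subseteq\invset(\vbf)$ fails and no sequence of transpositions can repair it, since transpositions only add inversions; already for $\ubf=x_{1}$ and $\vbf=x_{1}y_{1}$, inserting $y_{1}$ in front of $x_{1}$ gives $y_{1}x_{1}\not\bubleq\vbf$. You must either fix the positions (inserting each missing $y_{j}$ as far right as possible works, but then you have to verify that every inversion so created already lies in $\invset(\vbf)$), or do what the paper does: obtain the intermediate word of the correct support by \emph{deleting} the letters of $\vbf_{\ybf}\setminus\ubf_{\ybf}$ from $\vbf$ (and the letters of $\ubf_{\xbf}\setminus\vbf_{\xbf}$ from $\ubf$), so the indels on the $Y$-side run downward from $\vbf$ and no positional choice arises.

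The same-support step you rightly flag as needing an argument (the paper merely asserts that equal support together with nested inversion sets yields a chain of transpositions), but the selection rule you propose is wrong: choosing $(x_{s},y_{t})\in\invset(\vbf)\setminus\invset(\ubf)$ with $t$ maximal and transposing the pair immediately left of $y_{t}$ fails already for $\ubf=x_{1}y_{1}y_{2}$ and $\vbf=y_{1}y_{2}x_{1}$, where the letter left of $y_{2}$ is $y_{1}$. The correct (and short) argument takes a missing inversion $(x_{s},y_{t})$ with $x_{s}$ and $y_{t}$ at minimal distance in $\ubf$: any letter strictly between them produces a missing inversion at smaller distance (a $y_{t'}$ in between gives $(x_{s},y_{t'})$, an $x_{s'}$ gives $(x_{s'},y_{t})$, both lying in $\invset(\vbf)\setminus\invset(\ubf)$ because the restrictions to $\xbf$ and $\ybf$ are increasing), so the minimal pair is adjacent; transposing it adds exactly that inversion, keeps the inversion set inside $\invset(\vbf)$, and iterating until the inversion sets agree forces the words to coincide. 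With these two repairs your proof goes through and coincides with the paper's.
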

\begin{proof}
	Let $\ubf,\vbf\in\Shuf(m,n)$ be such that $\ubf\bubleq\vbf$. This means that there exists a sequence $\ubf=\ubf^{(0)},\ldots,\ubf^{(k)}=\vbf$ such that for all $i$, either $\ubf^{(i-1)}\indel\ubf^{(i)}$ or $\ubf^{(i-1)}\transpose\ubf^{(i)}$. If $k=0$, then the desired conditions all hold. We assume $k>0$.

	Since transposition does not change the restrictions to $\xbf$ or $\ybf$, and indels either delete letters from $\xbf$ or insert letters from $\ybf$, it follows by induction that $\vbf_{\xbf}$ is a subword of $\ubf_{\xbf}$ and $\ubf_{\ybf}$ is a subword of $\vbf_{\ybf}$.

	To check the last condition, set $\wbf=\ubf^{(k-1)}$ and assume that $\invset(\ubf_{\wbf})$ is a subset of $\invset(\wbf_{\ubf})$. If $\wbf\transpose\vbf$, then either $\invset(\wbf_{\ubf})=\invset(\vbf_{\ubf})$, or one new inversion is added. Also, $\wbf\transpose\vbf$ implies $\invset(\ubf_{\wbf})=\invset(\ubf_{\vbf})$ since $\wbf$ and $\vbf$ are composed of the same set of letters. Hence, $\invset(\ubf_{\vbf})$ is a subset of $\invset(\vbf_{\ubf})$.

	Finally, suppose $\wbf\indel\vbf$. If $\vbf$ is obtained from $\wbf$ by inserting $y_t$, then $\invset(\wbf_{\ubf})=\invset(\vbf_{\ubf})$ and $\invset(\ubf_{\wbf})=\invset(\ubf_{\vbf})$ since this new letter is not in $\ubf$. If $\vbf$ is obtained from $\wbf$ by deleting $x_s$, then both $\invset(\ubf_{\vbf})$ and $\invset(\vbf_{\ubf})$ can be computed by deleting all inversions of $\ubf$ and $\wbf$ that involve the letter $x_s$ from $\invset(\ubf_{\wbf})$ and $\invset(\wbf_{\ubf})$, respectively.

	\medskip
	
	Now assume that $\ubf$ and $\vbf$ are words such that the following three properties hold:
	\begin{itemize}
		\item $\vbf_{\xbf}$ is a subword of $\ubf_{\xbf}$,
		\item $\ubf_{\ybf}$ is a subword of $\vbf_{\ybf}$, and
		\item $\invset(\ubf_{\vbf})$ is a subset of $\invset(\vbf_{\ubf})$.
	\end{itemize}
	Let $\ubf_{\xbf}\setminus\vbf_{\xbf}=\{x_{i_{1}},x_{i_{2}},\ldots,x_{i_{s}}\}$.  We set $\ubf^{(0)}=\ubf$ and construct $\ubf^{(r)}$ from $\ubf^{(r-1)}$ by deleting the letter $x_{i_{r}}$.  Then, we have $\ubf\indel\ubf^{(1)}\indel\cdots\indel\ubf^{(r)}$.  If we set $\wbf=\ubf^{(r)}$, then we have $\wbf_{\xbf}=\vbf_{\xbf}$ and $\ubf\bubleq\wbf$.  The inversion set of $\wbf$ can be computed from the inversion set of $\ubf$ by deleting all inversions involving any of the letters $x_{i_{1}},x_{i_{2}},\ldots,x_{i_{s}}$.  In particular, if $(x,y)\in\invset(\wbf_{\vbf})$, then $x$ and $y$ must be letters of $\wbf$ and $\vbf$, which means that $(x,y)\in\invset(\ubf_{\vbf})\subseteq\invset(\vbf_{\ubf})$ by assumption.  But since $x$ and $y$ are letters of $\wbf$, it follows that $(x,y)\in\invset(\vbf_{\wbf})$.  We therefore get $\invset(\wbf_{\vbf})\subseteq\invset(\vbf_{\wbf})$.  
	
	Now, say that $\vbf_{\ybf}\setminus\ubf_{\ybf}=\{y_{j_{1}},y_{j_{2}},\ldots,y_{j_{t}}\}$.  We set $\vbf^{(t)}=\vbf$ and we obtain $\vbf^{(r-1)}$ from $\vbf^{(r)}$ by deleting the letter $y_{j_{r}}$.  Thus, we get $\vbf^{(0)}\indel\vbf^{(1)}\indel\cdots\indel\vbf^{(t)}$.  If we set $\tilde{\wbf}=\vbf^{(0)}$, then we have $\tilde{\wbf}_{\ybf}=\ubf_{\ybf}$ and $\tilde{\wbf}\bubleq\vbf$.  In fact, $\wbf$ and $\tilde{\wbf}$ have the same support.  Moreover, the inversion set of $\tilde{\wbf}$ can be computed from $\invset(\vbf)$ by deleting all inversions involving any of the letters $y_{j_{1}},y_{j_{2}},\ldots,y_{j_{t}}$.  So if $(x,y)\in\invset(\tilde{\wbf}_{\vbf})$, then necessarily $(x,y)\in\invset(\vbf_{\tilde{\wbf}})$, because $\vbf$ is obtained from $\tilde{\wbf}$ by inserting letters of $\ybf$, without changing any of the relative positions of the other letters.  Moreover, if $(x,y)\in\invset(\wbf)$, then $(x,y)\in\invset(\ubf_{\vbf})$, because $x$ and $y$ must be letters of both $\ubf$ and $\vbf$ and $\wbf$ is obtained from $\ubf$ by deleting letters of $\xbf$.  By assumption, we have $(x,y)\in\invset(\vbf_{\ubf})$, which implies $(x,y)\in\invset(\tilde{\wbf})$.  It follows that $\invset(\wbf)\subseteq\invset(\tilde{\wbf})$.  Since $\wbf$ and $\tilde{\wbf}$ have the same support and $\invset(\wbf)\subseteq\invset(\tilde{\wbf})$, we can obtain $\tilde{\wbf}$ from $\wbf$ by a sequence of transpositions, which yields $\wbf\bubleq\tilde{\wbf}$.  In summary, w get $\ubf\bubleq\wbf\bubleq\tilde{\wbf}\bubleq\vbf$ as desired.
\end{proof}

\begin{remark}
	If $m=0$ (resp. $n=0$), then the corresponding bubble poset is isomorphic to the Boolean lattice with $2^{n}$ (resp. $2^{m}$) elements.
\end{remark}

\begin{corollary}\label{cor:bubble_bounded}
	For $m,n\geq 0$, the poset $\Bub(m,n)$ is bounded with bottom element $\xbf$ and top element $\ybf$.
\end{corollary}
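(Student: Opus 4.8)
The plan is to read this off directly from the characterization in Lemma~\ref{lem:bubble_order}. First I observe that $\xbf$ and $\ybf$ are themselves shuffle words: indeed $\xbf_{\xbf}=\xbf$ and $\xbf_{\ybf}=\epsilon$ are subwords of $\xbf$ and of $\ybf$ respectively, and symmetrically for $\ybf$. So it suffices to fix an arbitrary $\wbf\in\Shuf(m,n)$ and verify the two inequalities $\xbf\bubleq\wbf$ and $\wbf\bubleq\ybf$; these identify $\xbf$ as the least and $\ybf$ as the greatest element, hence $\Bub(m,n)$ is bounded.

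For $\xbf\bubleq\wbf$ I would apply Lemma~\ref{lem:bubble_order} with $\ubf=\xbf$ and $\vbf=\wbf$. Its three clauses become: first, $\wbf_{\xbf}$ is a subword of $\xbf_{\xbf}=\xbf$, which is exactly part of the definition of a shuffle word; second, $\xbf_{\ybf}$ is a subword of $\wbf_{\ybf}$, which is immediate because $\xbf$ has no letters of $Y$, so $\xbf_{\ybf}=\epsilon$; third, $\invset(\xbf_{\wbf})\subseteq\invset(\wbf_{\xbf})$, which holds because $\xbf_{\wbf}$ is a subword of $\xbf$ and therefore uses only letters of $X$, whence $\invset(\xbf_{\wbf})=\emptyset$. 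The inequality $\wbf\bubleq\ybf$ is proved in the same way with the roles of $X$ and $Y$ interchanged: $\ybf_{\xbf}=\epsilon$ is a subword of $\wbf_{\xbf}$, the word $\wbf_{\ybf}$ is a subword of $\ybf_{\ybf}=\ybf$ by the definition of a shuffle word, and $\invset(\wbf_{\ybf})=\emptyset$ since $\wbf_{\ybf}$ uses only letters of $Y$.

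There is no genuine obstacle here; the only point worth isolating is the trivial remark that a word built from a single alphabet has empty inversion set, which makes the third clause of Lemma~\ref{lem:bubble_order} automatic in both directions. If one prefers to avoid the lemma, the same conclusion can be reached constructively: from $\xbf$ delete the letters of $X$ absent from $\wbf$ and then append, in increasing index order, the letters of $\wbf_{\ybf}$ --- a sequence of indels reaching the inversion-free word $\wbf_{\xbf}\wbf_{\ybf}$, which has the same support as $\wbf$ --- and then perform forward transpositions to create exactly the inversions in $\invset(\wbf)$, which is possible since the target has a larger inversion set and the same support; dually one reaches $\ybf$ from $\wbf$. Either route proves the corollary and simultaneously pins down $\xbf$ and $\ybf$ as the unique minimum and maximum.
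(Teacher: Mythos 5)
Your proof is correct and is exactly the argument the paper intends: the corollary is stated without proof as an immediate consequence of Lemma~\ref{lem:bubble_order}, and your verification of its three clauses (with the observation that one-alphabet words have empty inversion set) is precisely that argument.
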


\begin{lemma}\label{lem:bubble_duality}
	For $m,n\geq 0$, the poset $\Bub(m,n)$ is dual to $\Bub(n,m)$.
\end{lemma}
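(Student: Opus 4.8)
The plan is to produce an explicit order-reversing bijection $\phi\colon\Shuf(m,n)\to\Shuf(n,m)$. Write $X'=\{x'_1,\dots,x'_n\}$ and $Y'=\{y'_1,\dots,y'_m\}$ for the two alphabets underlying $\Shuf(n,m)$, so that (applying Corollary~\ref{cor:bubble_bounded} with the roles of the alphabets exchanged) $\Bub(n,m)$ has bottom element $x'_1\cdots x'_n$ and top element $y'_1\cdots y'_m$. I define $\phi$ by the letterwise substitution $x_i\mapsto y'_i$ and $y_j\mapsto x'_j$, leaving all positions of letters unchanged. First I would check that $\phi$ is well defined: for $\ubf\in\Shuf(m,n)$ the word $\ubf_{\xbf}$ has strictly increasing $X$-indices, hence $\phi(\ubf)_{Y'}=\phi(\ubf_{\xbf})$ has strictly increasing $Y'$-indices and is a subword of $y'_1\cdots y'_m$; symmetrically $\phi(\ubf)_{X'}=\phi(\ubf_{\ybf})$ is a subword of $x'_1\cdots x'_n$; and $\phi(\ubf)$ is simple because $\phi$ is injective on letters. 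Thus $\phi(\ubf)\in\Shuf(n,m)$. The analogous substitution $x'_i\mapsto y_i$, $y'_j\mapsto x_j$ is a two-sided inverse of $\phi$, so $\phi$ is a bijection, and it sends $\xbf$ to $y'_1\cdots y'_m$ and $\ybf$ to $x'_1\cdots x'_n$, i.e.\ it exchanges bottom and top, as an anti-isomorphism should.

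Next I would show that $\phi$ carries each move generating $\bubleq$ to the reverse of a move generating $\bubleq$ on $\Shuf(n,m)$. Deleting a letter $x_i\in X$ (a step up in $\Bub(m,n)$) becomes, under $\phi$, the deletion of the letter $\phi(x_i)\in Y'$; but in $\Bub(n,m)$ a step up \emph{inserts} a $Y'$-letter, so the move is reversed. Dually, inserting a letter $y_j\in Y$ becomes inserting $\phi(y_j)\in X'$, which is the reverse of deleting an $X'$-letter. Finally, a transposition $\ubf\transpose\ubf'$ replaces an adjacent factor $x\,y$, with $x\in X$ and $y\in Y$, by $y\,x$; applying $\phi$ turns a factor $\phi(x)\,\phi(y)$ with $\phi(x)\in Y'$ and $\phi(y)\in X'$ into $\phi(y)\,\phi(x)$, which is exactly the reverse of a forward transposition in $\Bub(n,m)$. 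Since $\bubleq$ on either set is the reflexive and transitive closure of these moves and $\phi$ is a bijection, it follows that $\ubf\bubleq\vbf$ in $\Bub(m,n)$ if and only if $\phi(\vbf)\bubleq\phi(\ubf)$ in $\Bub(n,m)$; hence $\phi$ is an order anti-isomorphism and $\Bub(m,n)$ is dual to $\Bub(n,m)$.

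As a cross-check I would also derive the equivalence directly from Lemma~\ref{lem:bubble_order}. Because $\phi$ is injective on letters, the common letters of $\phi(\ubf)$ and $\phi(\vbf)$ are the $\phi$-images of the common letters of $\ubf$ and $\vbf$, so $\phi(\ubf)_{\phi(\vbf)}=\phi(\ubf_{\vbf})$ and $\phi(\vbf)_{\phi(\ubf)}=\phi(\vbf_{\ubf})$, two words sharing one support $S$; put $S_X=S\cap X$ and $S_Y=S\cap Y$. For $x_t\in S_X$ and $y_s\in S_Y$ one has $\bigl(\phi(y_s),\phi(x_t)\bigr)\in\invset\bigl(\phi(\ubf_{\vbf})\bigr)$ exactly when $x_t$ precedes $y_s$ in $\ubf_{\vbf}$, i.e.\ exactly when $(x_t,y_s)\notin\invset(\ubf_{\vbf})$, and likewise for $\vbf_{\ubf}$. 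Since $\invset(\ubf_{\vbf})$ and $\invset(\vbf_{\ubf})$ are both subsets of the same product $S_X\times S_Y$, complementing within that product turns the inclusion $\invset(\ubf_{\vbf})\subseteq\invset(\vbf_{\ubf})$ into $\invset\bigl(\phi(\vbf_{\ubf})\bigr)\subseteq\invset\bigl(\phi(\ubf_{\vbf})\bigr)$, while the two subword conditions of Lemma~\ref{lem:bubble_order} simply swap roles under $\phi$ (the $X$-condition becomes the $Y'$-condition and conversely). I do not expect a genuine obstacle here; the only point demanding attention is this complementation of inversion sets, which is legitimate precisely because both inversion sets live on the same common support $S$ --- everything else is a routine unwinding of the definitions.
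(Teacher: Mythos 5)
Your proof is correct, but your primary argument takes a genuinely different route from the paper's. The paper proves duality by passing through Lemma~\ref{lem:bubble_order}: it uses the same letter-exchange bijection, notes that the two subword conditions swap under it, and that the inversion set of the image word is the complement (within the common support) of the original inversion set, so the inclusion $\invset(\ubf_{\vbf})\subseteq\invset(\vbf_{\ubf})$ reverses direction. That is essentially your third paragraph, the ``cross-check''---and your explicit remark that the complementation happens inside $S_X\times S_Y$, the product over the shared support, is a point the paper states somewhat loosely. Your main argument, by contrast, works directly with the generating relations: you check that the substitution sends each indel and each forward transposition to the reverse of such a move on $\Shuf(n,m)$, and conclude because $\bubleq$ is by definition the reflexive and transitive closure of these moves. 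This is more elementary---it needs nothing beyond the definition of $\bubleq$ and in particular is independent of Lemma~\ref{lem:bubble_order}---whereas the paper's route exercises the inversion-set characterization it has just established, which is the calculus used throughout the rest of the paper. Both arguments are complete; the only point worth flagging in yours is that the ``only if'' direction relies on applying the move-reversal observation to $\phi^{-1}$ as well, which you cover by symmetry since $\phi^{-1}$ is the analogous letterwise substitution.
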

\begin{proof}
	The map that exchanges $x$'s for $y$'s is clearly a bijection from $\Shuf(m,n)$ to $\Shuf(n,m)$.  Let $\ubf,\vbf\in\Shuf(m,n)$ and let $\bar{\ubf},\bar{\vbf}$ be the corresponding elements from $\Shuf(n,m)$.  Suppose that $\ubf\bubleq\vbf$.  We use Lemma~\ref{lem:bubble_order} to see that $\vbf_{\xbf}$ is a subword of $\ubf_{\xbf}$ and $\ubf_{\ybf}$ is a subword of $\vbf_{\ybf}$.  But then, $\bar{\vbf}_{\ybf}$ is a subword of $\bar{\ubf}_{\ybf}$ and $\bar{\ubf}_{\xbf}$ is a subword of $\bar{\vbf}_{\xbf}$.  Moreover, if $(x_{s},y_{t})\in\invset(\ubf)$, then $x_{s}$ appears after $y_{t}$ in $\ubf$.  This means that $y_{s}$ appears after $x_{t}$ in $\bar{\ubf}$ and we get that
	\begin{displaymath}
		\invset(\bar{\ubf}) = \bigl\{(x_{s},y_{t})\mid (x_{t},y_{s})\notin\invset(\ubf)\bigr\}.
	\end{displaymath}
	Thus, $\invset(\ubf_{\vbf})\subseteq\invset(\vbf_{\ubf})$ implies that $\invset(\bar{\ubf}_{\bar{\vbf}})\supseteq\invset(\bar{\vbf}_{\bar{\ubf}})$.  By Lemma~\ref{lem:bubble_order}, $\bar{\vbf}\bubleq\bar{\ubf}$ in $\Bub(n,m)$.  Applying the same argument in reverse finishes the proof.
\end{proof}

We now want to describe the covering pairs in $\Bub(m,n)$, \ie we want to understand the relations $\ubf\bubless\vbf$ such that there exists no $\wbf$ with $\ubf\bubless\wbf\bubless\vbf$.  We write $\ubf\bubcov\vbf$ in that event.

While it is easily checked that every transposition corresponds to a covering pair in $\Bub(m,n)$, the same is not true for arbitrary indels. In fact, let $\ubf=\rd{x_1x_2}\bl{y_1}$ and $\vbf=\rd{x_1}\bl{y_1}$.  Then, $\ubf\indel\vbf$, because we delete the letter $\rd{x_2}$.  However, this is not a covering pair, because the word $\wbf=\rd{x_1}\bl{y_1}\rd{x_2}$ lies strictly between $\ubf$ and $\vbf$, see Figure~\ref{fig:bubble_21}.  In fact, we have $\ubf\transpose\wbf\indel\vbf$.  In some sense, transpositions are prioritized over deletions.

Let $\ubf=u_{1}u_{2}\cdots u_{k}$. Recall that for $i\in[k]$ we denote by $\ubf_{\hat{\imath}}$ the word obtained by deleting the letter $u_{i}$.  Then, $\ubf_{\hat{\imath}}\indel\ubf$ if $u_{i}\in Y$ and $\ubf\indel\ubf_{\hat{\imath}}$ if $u_{i}\in X$.  We define a new relation $\sindel$ on $\Shuf(m,n)$ by setting
\begin{displaymath}
	\vbf\sindel\vbf'\quad\text{if and only if}\quad
		\begin{cases}
			\vbf=\ubf\;\text{and}\;\vbf'=\ubf_{\hat{\imath}}, & \text{if}\;u_{i},u_{i+1}\in X,\\
			\vbf=\ubf_{\hat{\imath}}\;\text{and}\;\vbf'=\ubf, & \text{if}\;u_{i},u_{i+1}\in Y.
	    \end{cases}
\end{displaymath}
In both cases, if $i=k$, then we just have to check the condition for $u_{k}$.  We call $\sindel$ a \defn{right indel}, because the inserted (resp. deleted) letter is pushed as far right as possible.  Clearly, $\ubf\sindel\vbf$ implies $\ubf\indel\vbf$, but the converse is not true.

\begin{lemma}\label{lem:bubble_covers}
	For $\ubf,\vbf\in\Shuf(m,n)$ we have $\ubf\bubcov\vbf$ if and only if either $\ubf\transpose\vbf$ or $\ubf\sindel\vbf$.
\end{lemma}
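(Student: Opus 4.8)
The plan is to prove the two implications separately, using the order characterization of Lemma~\ref{lem:bubble_order} throughout and invoking the duality of Lemma~\ref{lem:bubble_duality} to reduce the casework. For both directions it is convenient to split according to whether the step in question is a transposition, a deletion of an $X$-letter, or an insertion of a $Y$-letter, and to treat insertions by dualizing the deletion arguments.

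For the ``if'' direction, I would first check that every transposition $\ubf\transpose\vbf$ is a cover: here $\ubf$ and $\vbf$ have the same support and $\invset(\vbf)=\invset(\ubf)\uplus\{(x_a,y_b)\}$ for the swapped pair. If $\ubf\bubleq\wbf\bubleq\vbf$, then since the $\xbf$- and $\ybf$-restrictions of $\ubf$ and $\vbf$ coincide, Lemma~\ref{lem:bubble_order} forces $\wbf$ to have the same support, and then $\invset(\ubf)\subseteq\invset(\wbf)\subseteq\invset(\vbf)$; as these sets differ by a single pair, $\wbf\in\{\ubf,\vbf\}$. Next I would show that a right indel of the form $u_i,u_{i+1}\in X$ with $\vbf=\ubf_{\hat\imath}$ (deletion of $x_s=u_i$) is a cover; the case $u_i,u_{i+1}\in Y$ then follows from Lemma~\ref{lem:bubble_duality}. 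Given $\ubf\bubleq\wbf\bubleq\vbf$, the restriction conditions again give $\wbf_{\ybf}=\ubf_{\ybf}$ and force $\wbf_{\xbf}$ to be either $\ubf_{\xbf}$ or $\vbf_{\xbf}$. If $\wbf$ has the support of $\vbf$, then $\ubf\bubleq\wbf$ gives $\invset(\vbf)=\invset(\ubf_{\vbf})\subseteq\invset(\wbf)$ while $\wbf\bubleq\vbf$ gives the reverse inclusion, so $\wbf=\vbf$. If $\wbf$ has the support of $\ubf$, then $\invset(\wbf)$ and $\invset(\ubf)$ agree away from $x_s$, and the claim is that no extra inversion $(x_s,y_t)$ occurs in $\wbf$: writing $x_{s'}=u_{i+1}$, the letters $x_s,x_{s'}$ are adjacent in $\ubf$ and are never transposed, hence $x_s$ precedes $x_{s'}$ in $\wbf$ too, so $(x_s,y_t)\in\invset(\wbf)$ would entail $(x_{s'},y_t)\in\invset(\wbf)$; but adjacency of $x_s,x_{s'}$ in $\ubf$ makes $(x_{s'},y_t)$ an inversion of $\ubf$ as well, contradicting that we are away from $x_s$. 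Thus $\invset(\wbf)=\invset(\ubf)$ and $\wbf=\ubf$.

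For the ``only if'' direction, I would begin by noting that $\bubleq$ is the reflexive–transitive closure of indels and transpositions and is a partial order (Lemma~\ref{lem:bubble_order}), so a cover $\ubf\bubcov\vbf$ is realized by a single such step. If it is a transposition, we are done. If it is an indel deleting a letter $x_s=u_i$, then either $i=k$ or $u_{i+1}\in X$ — and then it is a right indel — or $u_{i+1}\in Y$; in the latter case, transposing the adjacent pair $u_iu_{i+1}$ and then deleting $x_s$ produces a word strictly between $\ubf$ and $\vbf$, contradicting $\ubf\bubcov\vbf$. Finally, the case where the indel inserts a letter of $Y$ is obtained by applying the preceding analysis in $\Bub(n,m)$ and translating back via Lemma~\ref{lem:bubble_duality}, under which insertion-type right indels correspond to deletion-type ones and covers to covers.

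I expect the main obstacle to be the ``if'' direction for right indels of deletion type, namely verifying that deleting an $X$-letter that is immediately followed by another $X$-letter (or is last) leaves no room for an intermediate element. This requires the careful inversion-set bookkeeping sketched above, and in particular the observation that the adjacency of $x_s$ and $x_{s'}$ in $\ubf$ is exactly what prevents a newly created inversion at $x_s$ in a candidate intermediate $\wbf$ from disappearing upon passing to the restriction $\wbf_{\vbf}$.
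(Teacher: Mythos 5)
Your proof is correct and rests on the same two pillars as the paper's: the order characterization of Lemma~\ref{lem:bubble_order}, and the observation that the letter $x_{s'}$ immediately following the deleted $x_{s}$ is exactly what rules out intermediate elements. The ``only if'' direction is essentially the paper's (a cover is a single move; a deletion whose letter is followed by a $Y$-letter factors as transpose-then-delete, giving a strictly intermediate word). Where you genuinely differ is the ``if'' direction for right indels: the paper takes a hypothetical $\wbf$ with $\ubf\bubcov\wbf\bubless\vbf$ and invokes its already-proven ``only if'' direction to force $\ubf\transpose\wbf$, then contradicts the new inversion; you instead sandwich an arbitrary $\wbf$ with $\ubf\bubleq\wbf\bubleq\vbf$ and pin down its support and inversion set directly. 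Your route is more self-contained (it does not feed one implication into the other), gives a fully rigorous treatment of the transposition case, which the paper dismisses as ``immediate'', and makes the insertion cases precise via Lemma~\ref{lem:bubble_duality} where the paper only says ``analogous''; the paper's route is shorter because it recycles the first half of the proof.

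Two small repairs. First, the last sentence of your deletion argument is garbled: the correct chain is that $(x_{s},y_{t})\in\invset(\wbf)\setminus\invset(\ubf)$ forces $(x_{s'},y_{t})\in\invset(\wbf)$; this inversion avoids $x_{s}$, so by the agreement of $\invset(\wbf)$ and $\invset(\ubf)$ away from $x_{s}$ (which follows from $\wbf\bubleq\vbf$ and $\invset(\vbf)=\bigl\{(x,y)\in\invset(\ubf)\mid x\neq x_{s}\bigr\}$) it lies in $\invset(\ubf)$; adjacency of $x_{s},x_{s'}$ in $\ubf$ then yields $(x_{s},y_{t})\in\invset(\ubf)$, a contradiction. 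Second, you implicitly assume $x_{s}$ is not the last letter of $\ubf$; when it is, every $y$ in the support of $\ubf$ already precedes $x_{s}$, so no extra inversion $(x_{s},y_{t})$ can arise and $\wbf=\ubf$ at once (the paper glosses over this case as well).
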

\begin{proof}
    Suppose that $\ubf\bubcov\vbf$.  By construction, the supports of $\ubf$ and $\vbf$ differ by at most one element.  If $\ubf$ and $\vbf$ have the same support, \ie $\ubf_{\xbf}=\vbf_{\xbf}$ and $\ubf_{\ybf}=\vbf_{\ybf}$.  But then, Lemma~\ref{lem:bubble_order} implies that $\invset(\ubf)=\invset(\vbf)\setminus\{(x_{s},y_{t})\}$ which yields $\ubf\transpose\vbf$.  Now suppose that there exists $x_{s}\in\ubf$ such that $x_{s}\notin\vbf$.  It follows that $\ubf_{\ybf}=\vbf_{\ybf}$.  Now, if $x_{s}$ is followed by some $y_{t}$, then we may consider the word $\wbf$ obtained from $\ubf$ by transposing $x_{s}$ and $y_{t}$.  We have $\wbf_{\xbf}=\ubf_{\xbf}\subsetneq\vbf_{\xbf}$ and $\wbf_{\ybf}=\ubf_{\ybf}=\vbf_{\ybf}$ and $\invset(\wbf_{\vbf})=\invset(\ubf_{\vbf})\subseteq\invset(\vbf_{\ubf})=\invset(\ubf_{\wbf})$.  It follows that $\ubf\bubless\wbf\bubless\vbf$, a contradiction.  Thus, $x_{s}$ must be followed by some $x_{s'}$ and we get $\ubf\sindel\vbf$.

    For the converse, first suppose that $\ubf\transpose\vbf$.  Then $\ubf$ and $\vbf$ have the same set of letters and there is a unique inversion in $\vbf$ that is not an inversion of $\ubf$.  Then, it is immediate that $\ubf\bubcov\vbf$, because we can neither apply another transposition to $\ubf$ or insert/delete a letter in $\ubf$ so that we stay below $\vbf$.
	
	Lastly, suppose that $\ubf\sindel\vbf$, where $\vbf$ is obtained from $\ubf$ by deleting the letter $x_{s}$.  By construction, $x_{s}$ is followed by $x_{s'}$ in $\ubf$.  Assume that there is some $\wbf\in\Shuf(m,n)$ with $\ubf\bubcov\wbf\bubless\vbf$.  In view of the first part of this proof, we get either $\ubf\transpose\wbf$ or $\ubf\sindel\wbf$.  However, since $\wbf_{\xbf}\subseteq\vbf_{\xbf}=\ubf_{\xbf}\setminus\{x_{s}\}$ the second case cannot happen.
% 	Then $\wbf_{\xbf}=\ubf_{\xbf}$.  Indeed, if there was $x_{s'}\in\wbf_{\xbf}\setminus\ubf_{\xbf}$, then $x_{s'}=x_{s}$ implies $\wbf=\vbf$ and $x_{s'}\neq x_{s}$ contradicts $\wbf\bubless\vbf$, because $x_{s'}\in\vbf_{\xbf}\setminus\wbf_{\xbf}$.  Moreover, $\ubf_{\ybf}=\vbf_{\ybf}$ forces $\ubf_{\ybf}=\wbf_{\ybf}$.
	Thus, $\ubf\transpose\wbf$, and suppose that $\ubf=u_{1}u_{2}\cdots u_{k}$, where $u_{i}=x_{s}$.  We may therefore write $\invset(\wbf)=\invset(\ubf)\cup\{(u_{j},u_{j+1})\}$, for some $j$.  However, since $\ubf\sindel\vbf$, we need to have $u_{i+1}=x_{s'}$ so that $u_{j},u_{j+1}\neq x_{s}$.  Now, $\wbf\bubless\vbf$ forces $(u_{j},u_{j+1})\in\invset(\vbf)\subseteq\invset(\ubf)$, a contradiction.  It follows that $\wbf$ cannot exist, and we conclude $\ubf\bubcov\vbf$.  The reasoning, when $\vbf$ is obtained from $\ubf$ by inserting a letter $y_{t}$ is analogous.
\end{proof}

A poset is \defn{Hasse-regular} of degree $k$ if its diagram, viewed as a simple graph, is $k$-regular.  Figure~\ref{fig:bubble_21} shows that $\Bub(2,1)$ is Hasse-regular of degree $3$.  This is not a coincidence.

\begin{lemma}\label{lem:bubble_hasse_regular}
	The poset $\Bub(m,n)$ is Hasse-regular of degree $m+n$.
\end{lemma}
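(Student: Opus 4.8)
The plan is to show that for every shuffle word $\ubf=u_1u_2\cdots u_k\in\Shuf(m,n)$, the total number of cover relations in $\Bub(m,n)$ incident to $\ubf$ — counting both those covering $\ubf$ and those covered by $\ubf$ — is exactly $m+n$. By Lemma~\ref{lem:bubble_covers}, these covers come in two flavours: transpositions $\transpose$ (which may point either into or out of $\ubf$) and right indels $\sindel$. So the counting reduces to bookkeeping over the positions of $\ubf$ together with the ``missing'' letters of $X$ and $Y$.

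First I would set up the bookkeeping. Write $A=X\setminus\ubf$ for the deleted $x$-letters and $B=Y\setminus\ubf$ for the not-yet-inserted $y$-letters, so $\lvert\ubf_{\xbf}\rvert+\lvert A\rvert=m$ and $\lvert\ubf_{\ybf}\rvert+\lvert B\rvert=n$. The edges at $\ubf$ then partition into four groups: (i) transpositions out of $\ubf$ — one for each adjacent pair $u_iu_{i+1}$ with $u_i\in X$, $u_{i+1}\in Y$; (ii) transpositions into $\ubf$ — one for each adjacent pair $u_iu_{i+1}$ with $u_i\in Y$, $u_{i+1}\in X$ (these correspond to inversions of $\ubf$ that can be ``undone''), since by Lemma~\ref{lem:bubble_covers} a cover $\wbf\bubcov\ubf$ with $\wbf,\ubf$ of the same support is exactly a transposition $\wbf\transpose\ubf$; (iii) right indels out of $\ubf$, i.e. deletions of an $x$-letter $u_i$ that is immediately followed by another $x$-letter (or is the last letter); (iv) right indels into $\ubf$, i.e. for each $y\in B$, the insertion of $y$ at its unique rightmost admissible position.

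Next I would count each group. The key structural observation is that $\ubf$, being a shuffle word, decomposes into maximal blocks of consecutive $X$-letters and consecutive $Y$-letters, alternating along the word; say there are blocks $\beta_1,\beta_2,\ldots,\beta_\ell$. Groups (i) and (ii) together count the number of \emph{adjacencies} between consecutive blocks of opposite type — but since all blocks alternate in type, every one of the $\ell-1$ internal block boundaries is counted exactly once (as type (i) if the $X$-block precedes the $Y$-block, as type (ii) otherwise). For group (iii): within each maximal $X$-block of length $b$, there are $b-1$ internal right-indel deletions, plus one more deletion of the last $X$-letter of the block precisely when that block is a suffix of $\ubf$ or is immediately followed by a $Y$-block — and by the right-indel definition (``if $i=k$, check the condition for $u_k$'') this works out so that every $X$-letter of $\ubf$ whose successor is either an $X$-letter or nothing admits exactly one deletion cover. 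A clean way to see the total: group (iii) contributes one edge for each $X$-letter of $\ubf$, \emph{minus} one for each $X$-block that is immediately followed by a $Y$-block (those are the $X$-letters with a $Y$-successor, which are \emph{not} deletable). Dually, using Lemma~\ref{lem:bubble_duality} or a direct argument, group (iv) contributes $\lvert B\rvert$ edges, one per non-inserted $y$-letter, since each has a unique rightmost admissible slot. Finally I would also account for the $Y$-letters of $\ubf$ that are ``insertion-removable'' — but wait, these are already subsumed: insertions into $\ubf$ come only from $B$, and the $Y$-letters already present in $\ubf$ contribute no new cover except through the block-boundary transpositions of group (ii) and the deletions counted from \emph{above} $\ubf$. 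Assembling: the transpositions give $\ell-1$; adding the right-indel deletions from $\ubf$ and into $\ubf$ (from $A$, counting from the word just above) together with insertions from $B$, a telescoping over the block structure collapses everything to $\lvert\ubf_{\xbf}\rvert+\lvert A\rvert+\lvert\ubf_{\ybf}\rvert+\lvert B\rvert=m+n$.

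The main obstacle — and the place where I would be most careful — is the bookkeeping at \emph{block boundaries} and at the \emph{end of the word}: the right-indel definition treats the last letter $u_k$ specially, and one must verify that an $X$-letter ending a block (or ending $\ubf$) is deletable iff its successor is not a $Y$-letter, and symmetrically that the unique insertion slot for each $y\in B$ never coincides with or conflicts with an existing adjacency in a way that double-counts. The cleanest route is probably to prove the identity locally: group the $m+n$ ``letter slots'' (the $m$ potential $X$-positions and $n$ potential $Y$-positions, present or absent) and exhibit an explicit bijection between these slots and the edges at $\ubf$ — each present $X$-letter maps either to its deletion (if deletable) or to the transposition at the block boundary to its right; each absent $X$-letter of $A$ maps to the deletion-from-above covering pair (i.e. the cover $\ubf\bubcov\wbf$ is reversed when we view it as incident to $\ubf$ from below... rather: each $x\in A$ corresponds to a lower cover of $\ubf$ obtained by re-inserting $x$ at its rightmost slot); and dually for the $Y$-slots. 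Checking that this map is a well-defined bijection is exactly the content of Lemma~\ref{lem:bubble_covers} applied position by position, and once that is in place the count is immediate.
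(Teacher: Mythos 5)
Your proposal, in the form of the final slot-to-edge bijection, is correct and is essentially the paper's proof: the paper shows that each letter of $\xbf$ present in $\ubf$ and each letter of $\ybf$ absent from $\ubf$ yields exactly one upper cover (giving $a+(n-b)$ of them) and then obtains the $(m-a)+b$ lower covers by duality (Lemma~\ref{lem:bubble_duality}), whereas you treat the lower covers (re-insertion of each missing $x$ at its rightmost slot, deletion or un-transposition for each present $y$) directly. The intermediate four-group/block-telescoping bookkeeping is dispensable (as listed it omits precisely those downward right-indel edges and the telescoping is never carried out), but the bijection you settle on at the end repairs this and, combined with Lemma~\ref{lem:bubble_covers}, gives the count $m+n$.
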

\begin{proof}
	Let $\ubf=u_{1}u_{2}\cdots u_{k}$.  Let $x\in\xbf$ be such that $x=u_{i}$ for some $i$.  If $i=k$ or $u_{i+1}$ is a letter of $\xbf$, then we obtain $\ubf'$ by performing a right indel deleting $x$.  If $i<k$ and $u_{i+1}$ is a letter of $\ybf$, then we obtain $\ubf'$ by transposing $u_{i}$ and $u_{i+1}$.  By Lemma~\ref{lem:bubble_covers}, we get $\ubf\bubcov\ubf'$.  Thus, every letter of $\xbf$ contained in $\ubf$ generates a unique element $\ubf'$ covering $\ubf$.  
	
	Now, let $y_{t}\in\ybf$ be such that $y\neq u_{i}$ for all $i$.  If for all $t'>t$, $y_{t'}\notin\ubf$, then we obtain $\ubf'$ by performing a right indel inserting $y_{t}$ at the end of $\ubf$.  Otherwise, there exists a smallest $t'$ with $t'>t$ such that $y_{t'}$ is a letter of $\ubf$.  We obtain $\ubf'$ by performing a right indel inserting $y_{t}$ right before $y_{t'}$.  By Lemma~\ref{lem:bubble_covers}, we get $\ubf\bubcov\ubf'$.  In other words, every letter of $\ybf$ \emph{not} contained in $\ubf$ generates a unique element $\ubf'$ covering $\ubf$.  
	
	We notice that none of the elements obtained from $\ubf$ by deleting or transposing at some letter of $\xbf$ agrees with an element obtained from $\ubf$ by inserting some letter of $\ybf$.  Thus, if $\ubf_{\xbf}$ has $a$ elements and $\ubf_{\ybf}$ has $b$ elements, then we get $a+(n-b)$ distinct elements covering $\ubf$.  By Lemma~\ref{lem:bubble_duality}, we obtain $(m-a)+b$ distinct elements that are covered by $\ubf$.  Therefore, the element $\ubf$ has degree $a+n-b+m-a+b=m+n$.
\end{proof}

We summarize the following part of the proof of Lemma~\ref{lem:bubble_hasse_regular} for later use.

\begin{corollary}\label{cor:xy_removal}
	Let $\vbf$ in $\Shuf(m,n)$, and let $x$ (resp. $y$) be an arbitrary letter of $\xbf$ (resp. $\ybf$).  
	\begin{itemize}
		\item If $x\notin\vbf$, then there exists a unique word $\ubf$ with $\ubf\bubcov\vbf$ such that $x\in\ubf$.
		\item If $y\in\vbf$, then there exists a unique word $\ubf$ with $\ubf\bubcov\vbf$ such that either $y\notin\ubf$ or $\ubf_{\xbf}=\vbf_{\xbf}$, $\ubf_{\ybf}=\vbf_{\ybf}$ and $\invset(\ubf)=\invset(\vbf)\setminus\bigl\{(x',y)\bigr\}$ for $x'$ immediately succeeding $y$ in $\vbf$.
	\end{itemize}
% 
% 	\begin{itemize}
% 		\item If $x\in\ubf$, then there exists a unique word $\ubf'$ with $\ubf\bubcov\ubf'$ such that either $x\notin\ubf'$ or $\ubf_{\xbf}=\ubf'_{\xbf}$, $\ubf_{\ybf}=\ubf'_{\ybf}$ and $\invset(\ubf')=\invset(\ubf)\uplus\{x,y'\}$ for $y'$ immediately succeeding $x$ in $\ubf$.
% 		\item If $y\notin\ubf$, then there exists a unique word $\ubf'$ with $\ubf\bubcov\ubf'$ such that $y\in\ubf'$.
% 	\end{itemize}
\end{corollary}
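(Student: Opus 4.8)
Here is how I would approach Corollary~\ref{cor:xy_removal}.

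The plan is to read both statements off from Lemma~\ref{lem:bubble_covers}, which tells us that every covering pair $\ubf\bubcov\vbf$ of $\Bub(m,n)$ has exactly one of two shapes: it is a transposition $\ubf\transpose\vbf$, and then $\ubf$ and $\vbf$ share the same support; or it is a right indel $\ubf\sindel\vbf$, and then the supports of $\ubf$ and $\vbf$ differ in exactly one letter. Before arguing directly, I would note a shortcut: applying the duality of Lemma~\ref{lem:bubble_duality} (which swaps the roles of $\xbf$ and $\ybf$ and reverses the order), the first item turns into the assertion that every letter of $\ybf$ \emph{not} occurring in a word $\bar\vbf\in\Shuf(n,m)$ produces a unique cover of $\bar\vbf$, and the second item turns into the assertion that every letter of $\xbf$ \emph{occurring} in $\bar\vbf$ produces a unique cover of $\bar\vbf$; both of these were established verbatim in the course of the proof of Lemma~\ref{lem:bubble_hasse_regular}, so one may simply cite that argument. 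For completeness I would also give the direct reasoning, as follows.

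For the first item, I would start from an arbitrary $\ubf$ with $\ubf\bubcov\vbf$ and $x\in\ubf$. Since $x\in\ubf\setminus\vbf$, the supports of $\ubf$ and $\vbf$ differ, so Lemma~\ref{lem:bubble_covers} forces $\ubf\sindel\vbf$, and since $x\in\ubf\setminus\vbf$ this must be the deletion case: $\vbf=\ubf_{\hat{\imath}}$ with $u_{i}=x$ and either $i$ last in $\ubf$ or $u_{i+1}\in\xbf$. Because $\ubf_{\ybf}=\vbf_{\ybf}$ and $\ubf_{\xbf}$ is $\vbf_{\xbf}$ with $x$ placed into its unique order-preserving slot among the $X$-letters, the right-indel condition (``$x$ last, or immediately followed by an $X$-letter'') pins down the position of $x$ in $\ubf$: immediately before the least-indexed $X$-letter of $\vbf$ whose index exceeds that of $x$, or at the very end of $\vbf$ if there is none. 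Conversely, I would check that inserting $x$ into $\vbf$ at exactly this spot yields a shuffle word $\ubf$ with $\ubf\sindel\vbf$, hence $\ubf\bubcov\vbf$ by Lemma~\ref{lem:bubble_covers}; together this gives existence and uniqueness.

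For the second item, I would split into cases according to the letter immediately following $y$ in $\vbf$. If $y$ is last in $\vbf$ or is immediately followed by a letter of $\ybf$, the candidate is $\wbf\defs\vbf_{\hat{\jmath}}$ with $v_{j}=y$: it satisfies $\wbf\sindel\vbf$, hence $\wbf\bubcov\vbf$, and has $y\notin\wbf$. Uniqueness then splits into two checks: any $\ubf\bubcov\vbf$ with $y\notin\ubf$ has support differing from $\vbf$ only in $y$, which forces $\ubf\sindel\vbf$ in the insertion case and hence $\ubf=\vbf_{\hat{\jmath}}=\wbf$; and no cover of the ``inversion-removing'' form can occur, since $y$ is not immediately followed in $\vbf$ by a letter of $\xbf$. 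If instead $y$ is immediately followed in $\vbf$ by some $x'\in\xbf$, the candidate $\ubf$ is obtained by transposing the adjacent pair $y\,x'$ into $x'\,y$: then $\ubf$ is a shuffle word, $\ubf\transpose\vbf$, so $\ubf\bubcov\vbf$, the supports agree, and $\invset(\ubf)=\invset(\vbf)\setminus\bigl\{(x',y)\bigr\}$ with $x'$ the letter succeeding $y$ in $\vbf$. Here no cover with $y\notin\ubf$ occurs (as $y$ is followed by $x'\in\xbf$, not by a letter of $\ybf$), and any cover with $\ubf_{\xbf}=\vbf_{\xbf}$, $\ubf_{\ybf}=\vbf_{\ybf}$ and $\invset(\ubf)=\invset(\vbf)\setminus\bigl\{(x',y)\bigr\}$ must, by Lemma~\ref{lem:bubble_covers}, be a transposition whose transposed pair is precisely $(x',y)$, forcing $\ubf$ to be the word just built. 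I do not expect a genuine obstacle here; the one delicate point is the per-case bookkeeping — confirming that each constructed word really is a covering pair (immediate from Lemma~\ref{lem:bubble_covers}) and ruling out a cover of the ``other'' type at the prescribed letter — and if a shorter write-up is preferred, the cleanest route is to restate the two claims in the dual lattice $\Bub(n,m)$ via Lemma~\ref{lem:bubble_duality} and point to the corresponding paragraphs in the proof of Lemma~\ref{lem:bubble_hasse_regular}.
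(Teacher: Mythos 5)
Your proposal is correct and takes essentially the same route as the paper: there the corollary is given no separate proof at all, being introduced as a summary of the relevant part of the proof of Lemma~\ref{lem:bubble_hasse_regular} (whose cover constructions rest on Lemma~\ref{lem:bubble_covers} together with the duality of Lemma~\ref{lem:bubble_duality}), which is exactly the shortcut you identify. Your additional direct case analysis from Lemma~\ref{lem:bubble_covers} is sound and simply makes explicit the uniqueness bookkeeping that the paper leaves implicit.
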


This description of the covering pairs in $\Bub(m,n)$ has the following consequence for the subposet of shuffle words with fixed support.  For $s\in[m]$ and $t\in[n]$, we consider (arbitrary) subwords $\xbf^{(s)}=x_{i_{1}}x_{i_{2}}\cdots x_{i_{s}}$ of $\xbf$ and $\ybf^{(t)}=y_{j_{1}}y_{j_{2}}\cdots y_{j_{t}}$ of $\ybf$.  Let
\begin{displaymath}
	\tilde{W}\bigl(\xbf^{(s)},\ybf^{(t)}\bigr) \defs \bigl\{\ubf\in\Shuf(m,n)\mid \ubf_{\xbf}=\xbf^{(s)}\;\text{and}\;\ubf_{\ybf}=\ybf^{(t)}\bigr\}.
\end{displaymath}
We remark that $\tilde{W}\bigl(\xbf^{(s)},\ybf^{(t)}\bigr)$ does not depend on the concrete choice of subwords of $\xbf$ and $\ybf$, but only on the length of these words.  Since all elements in $\tilde{W}\bigl(\xbf^{(s)},\ybf^{(t)}\bigr)$ have the same support, Lemma~\ref{lem:bubble_order} implies that for $\ubf,\vbf\in\tilde{W}\bigl(\xbf^{(s)},\ybf^{(t)}\bigr)$ we have $\ubf\bubleq\vbf$ if and only if $\invset(\ubf)\subseteq\invset(\vbf)$.  We may thus embed the interval $\bigl(\tilde{W}\bigl(\xbf^{(s)},\ybf^{(t)}\bigr),\bubleq\bigr)$ into the \defn{weak order} on the set of all permutations of length $s+t$ by identifying the letter $x_{i}$ with the number $i$ and the letter $y_{j}$ with the number $s+j$.  See for instance \cite{markowsky:permutation} for more background on the weak order of permutations.  This perspective yields the following property of the interval $\bigl(\tilde{W}\bigl(\xbf^{(s)},\ybf^{(t)}\bigr),\bubleq\bigr)$.

\begin{proposition}\label{prop:bubble_same_support_distributive}
	For any $\xbf^{(s)}$ and $\ybf^{(t)}$, the induced subposet $\Bigl(\tilde{W}\bigl(\xbf^{(s)},\ybf^{(t)}\bigr),\bubleq\Bigr)$ is a distributive lattice with $\binom{s+t}{s}$ elements.
\end{proposition}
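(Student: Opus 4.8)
The plan is to exhibit $\bigl(\tilde{W}(\xbf^{(s)},\ybf^{(t)}),\bubleq\bigr)$ as a copy of the lattice of order ideals of a grid poset, so that distributivity and the cardinality statement both drop out. Write $\tilde{W}\defs\tilde{W}(\xbf^{(s)},\ybf^{(t)})$. First I would record that, since $\ubf_{\xbf}$ and $\ubf_{\ybf}$ are order-preserving for every $\ubf\in\Shuf(m,n)$, each element of $\invset(\ubf)$ pairs a letter of $\xbf$ with a letter of $\ybf$; hence $\invset$ maps $\tilde{W}$ into the Boolean lattice $2^{\xbf^{(s)}\times\ybf^{(t)}}$. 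As already noted in the text, Lemma~\ref{lem:bubble_order} gives $\ubf\bubleq\vbf\iff\invset(\ubf)\subseteq\invset(\vbf)$ for $\ubf,\vbf\in\tilde{W}$, so $\invset$ is an order embedding (injective by antisymmetry). It then remains to identify the image $\Scal\defs\{\invset(\ubf)\mid\ubf\in\tilde{W}\}$ and to check that $(\Scal,\subseteq)$ is a distributive lattice.

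Second, I would show that $\Scal$ consists exactly of the \emph{staircase-closed} subsets: writing $\xbf^{(s)}=x_{i_{1}}\cdots x_{i_{s}}$ and $\ybf^{(t)}=y_{j_{1}}\cdots y_{j_{t}}$, call $S$ staircase-closed if $(x_{i_{a}},y_{j_{b}})\in S$ forces $(x_{i_{a'}},y_{j_{b'}})\in S$ for all $a'\geq a$ and $b'\leq b$. The inclusion $\Scal\subseteq\{\text{staircase-closed}\}$ is immediate: if $y_{j_{b}}$ precedes $x_{i_{a}}$ in $\ubf$, then, reading off the order-preserving restrictions, $y_{j_{b'}}$ precedes $y_{j_{b}}$ precedes $x_{i_{a}}$ precedes $x_{i_{a'}}$. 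For the reverse inclusion, given a staircase-closed $S$ put $c_{a}\defs\bigl\lvert\{b\mid (x_{i_{a}},y_{j_{b}})\in S\}\bigr\rvert$; closure forces this set to be $\{1,\dots,c_{a}\}$ and forces $c_{1}\leq\dots\leq c_{s}$, and inserting each $x_{i_{a}}$ directly after $y_{j_{c_{a}}}$ produces the unique word of $\tilde{W}$ with inversion set $S$. Thus $\Scal$ is precisely the family of staircase-closed sets, which is plainly closed under $\cap$ and $\cup$ and contains $\emptyset=\invset(x_{i_{1}}\cdots x_{i_{s}}y_{j_{1}}\cdots y_{j_{t}})$; being a nonempty sublattice of a Boolean lattice, $(\Scal,\subseteq)$ is distributive, hence so is $(\tilde{W},\bubleq)\cong(\Scal,\subseteq)$. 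Concretely, $\Scal$ is the set of order ideals of the poset on $[s]\times[t]$ in which $(a,b)\le(a',b')$ precisely when $a\ge a'$ and $b\le b'$, which is isomorphic to $\mathbf{s}\times\mathbf{t}$, so $\tilde{W}\cong J(\mathbf{s}\times\mathbf{t})$. Finally $\lvert\tilde{W}\rvert=\binom{s+t}{s}$, since a shuffle of a fixed $s$-letter word with a fixed $t$-letter word is determined by the $s$ positions holding $x$-letters (equivalently, this counts order ideals of $\mathbf{s}\times\mathbf{t}$).

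The argument is mostly bookkeeping; the only steps that need care are the ``$\supseteq$'' half of the description of $\Scal$ — checking that every staircase-closed set is realized, and uniquely — and fixing the correct orientation of the grid poset so that staircase-closed sets become its order ideals. An alternative route, suggested by the embedding into the weak order recalled just before the statement, is to observe that $\tilde{W}$ is carried onto the set of Grassmannian permutations, i.e. the minimal-length coset representatives of $\Sfrak_{s+t}/(\Sfrak_{s}\times\Sfrak_{t})$, which is classically known to form a distributive sublattice of the weak order isomorphic to $J(\mathbf{s}\times\mathbf{t})$; but the self-contained argument above seems shortest.
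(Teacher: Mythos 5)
Your argument is correct, and it takes a genuinely different route from the paper. The paper identifies $\bigl(\tilde{W},\bubleq\bigr)$ with the weak order interval $[e,\pi]$ in $\Sfrak_{s+t}$, where $\pi=s{+}1\,\cdots\,s{+}t\,1\,\cdots\,s$ is fully commutative, and then invokes Stembridge's results (distributivity of such intervals and the enumeration) to conclude both claims at once --- essentially the Grassmannian-permutation route you mention as an alternative at the end. You instead work entirely inside the bubble order: you use the inversion-set order embedding, characterize the image as the staircase-closed subsets of $\xbf^{(s)}\times\ybf^{(t)}$, observe that this family is closed under $\cap$ and $\cup$ (hence a distributive sublattice of a Boolean lattice), and identify it with $J(\mathbf{s}\times\mathbf{t})$, with the count $\binom{s+t}{s}$ either from the ideal count or from the trivial shuffle count. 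What your approach buys is self-containedness and an explicit structural description of $\tilde{W}$ as the ideal lattice of a grid, with no external citation; what the paper's approach buys is brevity and a placement of the subposet within the well-developed theory of fully commutative elements of the weak order, which dovetails with the embedding into the weak order stated just before the proposition. One cosmetic point in your surjectivity step: ``insert $x_{i_{a}}$ directly after $y_{j_{c_{a}}}$'' needs the convention that $c_{a}=0$ means $x_{i_{a}}$ is placed before all $y$-letters, and that $x$-letters sharing the same value $c_{a}$ are inserted in their natural order; with that said, the realized word is indeed unique and the bijection with staircase-closed sets is complete.
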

\begin{proof}
	Let $\tilde{W}=\tilde{W}\bigl(\xbf^{(s)},\ybf^{(t)}\bigr)$.  If $\wbf\bubleq\wbf'$ for $\wbf,\wbf'\in\tilde{W}$, then $\wbf'$ is obtained from $\wbf$ by a sequence of transpositions, because $\wbf$ and $\wbf'$ have the same support.  Moreover, $(\tilde{W},\bubleq)$ has a unique minimal element $\xbf^{(s)}\ybf^{(t)}$ and a unique maximal element $\ybf^{(t)}\xbf^{(s)}$. 
	
	Thus, if we consider the permutation 
	\begin{displaymath}
		\pi = s{+}1\;s{+}2\;\ldots s{+}t\;1\;2\;\ldots\;s, 
	\end{displaymath}
	then $(\tilde{W},\bubleq)$ is isomorphic to the weak order interval $\mathbf{I}_{s,t}$ between the identity permutation on $[s+t]$ and $\pi$.  By~\cite{stembridge:fully}*{Theorem~3.2}, this interval $\mathbf{I}_{s,t}$ is a distributive lattice, because $\pi$ is a \emph{fully commutative} permutation.  This claim and the cardinality result follow from \cite{stembridge:fully}*{Theorem~6.1}.
\end{proof}

\section{Lattice Properties of $\Bub(m,n)$}
	\label{sec:bubble_lattice}
\subsection{Joins in $\Bub(m,n)$}

We now prove the first part of Theorem~\ref{thm:bubble_lattice_main}, namely that the bubble poset is, in fact, a lattice.

\begin{theorem}\label{thm:bubble_lattice}
	For $m,n\geq 0$, the poset $\Bub(m,n)$ is a lattice.
\end{theorem}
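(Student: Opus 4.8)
The plan is to prove that $\Bub(m,n)$ is a lattice by exhibiting the existence of joins; meets then come for free via the self-duality of Lemma~\ref{lem:bubble_duality} (the dual of $\Bub(m,n)$ is $\Bub(n,m)$, so having all joins in every $\Bub(m,n)$ forces all meets as well). Since $\Bub(m,n)$ is bounded with top $\ybf$ by Corollary~\ref{cor:bubble_bounded}, it suffices to show that any two elements $\ubf,\vbf$ have a least upper bound. The strategy is to guess the join explicitly from the characterization in Lemma~\ref{lem:bubble_order}: an upper bound $\wbf$ of $\ubf$ and $\vbf$ must satisfy that $\wbf_{\xbf}$ is a subword of both $\ubf_{\xbf}$ and $\vbf_{\xbf}$, that $\wbf_{\ybf}$ contains both $\ubf_{\ybf}$ and $\vbf_{\ybf}$ as subwords, and that $\invset(\wbf)$ is large enough to absorb the inversions of $\ubf$ and $\vbf$ restricted to the common letters. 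The natural candidate therefore has $X$-support $\ubf_{\xbf}\cap\vbf_{\xbf}$ (intersection of supports), $Y$-support $\ubf_{\ybf}\cup\vbf_{\ybf}$ (union of supports), and an inversion set that is the ``smallest reasonable'' inversion set compatible with being above both.

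The key steps, in order, are: (1) Define the candidate support sets $S = \ubf_{\xbf}\cap\vbf_{\xbf}$ (as a subword of $\xbf$) and $T = \ubf_{\ybf}\cup\vbf_{\ybf}$ (as a subword of $\ybf$). (2) Define the candidate inversion set $I$ as (roughly) the ``inversion closure'' of $\invset(\ubf_{\wbf})\cup\invset(\vbf_{\wbf})$, i.e., the smallest set of pairs $(x_s,y_t)$ with $x_s\in S$, $y_t\in T$ that contains all inversions forced by $\ubf$ and $\vbf$ on the common letters and that corresponds to an actual word; one must check that inversion sets of words in a fixed-support class $\tilde W$ form a distributive lattice (Proposition~\ref{prop:bubble_same_support_distributive}), so one can take the join there — but care is needed because the common-letter sets of $\ubf$ and of $\vbf$ with $\wbf$ differ, so this is an intersection/union of inversion conditions coming from two different subword comparisons. (3) Verify, using Lemma~\ref{lem:bubble_order}, that the word $\wbf$ with support $(S,T)$ and inversion set $I$ actually satisfies $\ubf\bubleq\wbf$ and $\vbf\bubleq\wbf$. (4) Show $\wbf$ is least: given any upper bound $\wbf'$ of $\ubf$ and $\vbf$, argue that $\wbf'_{\xbf}\subseteq S$, $T\subseteq \wbf'_{\ybf}$, and that the inversion conditions relating $\wbf$ to $\wbf'$ hold, so $\wbf\bubleq\wbf'$ again by Lemma~\ref{lem:bubble_order}. (5) Conclude meets exist by Lemma~\ref{lem:bubble_duality}.

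I expect the main obstacle to be step (2)–(3): pinning down the correct inversion set $I$ of the join and verifying it defines a bona fide shuffle word. The subtlety is that the condition ``$\invset(\ubf_{\wbf})\subseteq\invset(\wbf_{\ubf})$'' compares $\ubf$ and $\wbf$ only on their common letters, which is $\ubf_{\xbf}\cap S = S$ together with $\ubf_{\ybf}$ — and similarly for $\vbf$ — so the inversions of $\wbf$ involving a $y_t$ lying in $\vbf$ but not $\ubf$ are constrained only by $\vbf$, those involving $y_t$ in $\ubf$ but not $\vbf$ only by $\ubf$, and those with $y_t$ in both by the union. One must check these local constraints are mutually consistent and transitively closed so that a word realizing them exists; the distributive-lattice structure on fixed-support classes (Proposition~\ref{prop:bubble_same_support_distributive}), equivalently the fact that inversion sets of permutations that arise as weak-order elements are exactly the transitively closed ones, is the tool that makes this work. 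Once the candidate word is correctly identified, the upper-bound and least-upper-bound verifications are routine applications of Lemma~\ref{lem:bubble_order}. An alternative, perhaps cleaner route would be to prove directly that the set of upper bounds of $\{\ubf,\vbf\}$ is closed under meets within some ambient distributive lattice, or to induct on $m+n$ using Corollary~\ref{cor:xy_removal} to peel off a letter; I would keep the inductive approach in reserve in case the explicit formula for $I$ becomes unwieldy.
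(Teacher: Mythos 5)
Your plan is sound and would yield a correct proof, but it is a genuinely different route from the paper's. The paper does not construct the join explicitly in the proof of this theorem: it considers the set $W$ of common upper bounds of $\ubf,\vbf$, uses the covering-pair description (Corollary~\ref{cor:xy_removal}) to show that every \emph{minimal} element of $W$ must have $X$-support exactly $\ubf_{\xbf}\cap\vbf_{\xbf}$ and $Y$-support exactly $\ubf_{\ybf}\cup\vbf_{\ybf}$ (Claim~1, by exchanging a cover to produce a strictly smaller upper bound otherwise), and then shows by an inversion argument that the fixed-support class contains a unique minimal upper bound (Claim~2); meets come from duality exactly as you say. Your approach instead guesses the join: same support data, inversion set the closure of the inversions forced by $\ubf$ and $\vbf$ on the common letters, realizability via the fixed-support distributive lattice of Proposition~\ref{prop:bubble_same_support_distributive}, then a direct leastness check. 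This is essentially the content the paper defers to Lemma~\ref{lem:bubble_joins_fill} (there phrased via the $\ybf$-filling), so what you buy is an explicit formula for $\ubf\vee\vbf$ at the same time as existence; what the paper's route buys is that one never has to prove that a prescribed set of pairs is the inversion set of an actual shuffle word, nor compare the candidate against upper bounds outside the fixed-support class.

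One step you label ``routine'' deserves an explicit argument: leastness against an arbitrary upper bound $\wbf'$, whose $X$-support may be a \emph{proper} subword of $\ubf_{\xbf}\cap\vbf_{\xbf}$. A closure-added inversion $(x_{s},y_{t})$ of your candidate is generated by some forced inversion $(x_{s_{0}},y_{t_{0}})\in\invset(\ubf)$ (say) with $s\geq s_{0}$ and $t\leq t_{0}$, and $x_{s_{0}}$ need not be a letter of $\wbf'$, so you cannot transfer it directly. The fix is to propagate inside $\ubf$ first: since $x_{s_{0}}$ precedes $x_{s}$ in $\ubf$, also $(x_{s},y_{t_{0}})\in\invset(\ubf)$, and this inversion involves only letters of $\wbf'$, hence lies in $\invset(\wbf'_{\ubf})$ because $\ubf\bubleq\wbf'$; then $y_{t}$ precedes $y_{t_{0}}$ in $\wbf'$, giving $(x_{s},y_{t})\in\invset(\wbf')$ as needed. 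With that observation spelled out (and the standard characterization of which subsets of pairs are realizable as inversion sets within a fixed-support class), your argument goes through.
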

\begin{proof}
	By Lemma~\ref{lem:bubble_duality}, meets in $\Bub(m,n)$ correspond to joins in $\Bub(n,m)$.  Therefore, it remains to establish the existence of joins in $\Bub(m,n)$.  

	For $\ubf,\vbf\in\Shuf(m,n)$ consider
	\begin{displaymath}
		W = \bigl\{\wbf\in\Shuf(m,n)\mid \ubf\bubleq\wbf\;\text{and}\;\vbf\bubleq\wbf\bigr\}.
	\end{displaymath}
	We want to show that $W$ has a unique minimal element. In order to achieve this, we define $\xbf'\defs \ubf_{\xbf}\cap\vbf_{\xbf}$ and $\ybf'\defs \ubf_{\ybf}\cup\vbf_{\ybf}$, and consider the set $\tilde{W}(\xbf',\ybf')$ defined before Proposition~\ref{prop:bubble_same_support_distributive}.  In fact, we write $\tilde{W}$ rather than $\tilde{W}(\xbf',\ybf')$.
	
	\medskip
	
	\underline{Claim 1}: Every minimal element of $(W,\bubleq)$ is contained in $\tilde{W}$.\\
	Let $\wbf\in W$ be minimal with respect to $\bubleq$.  Since $\ubf\bubleq\wbf$ and $\vbf\bubleq\wbf$, then Lemma~\ref{lem:bubble_order} implies that $\wbf_{\xbf}$ is a subword of $\xbf'$ and $\ybf'$ is a subword of $\wbf_{\ybf}$ and we have $\invset(\ubf_{\wbf})\subseteq\invset(\wbf_{\ubf})$ and $\invset(\vbf_{\wbf})\subseteq\invset(\wbf_{\vbf})$.  
	
	(a) Assume that there exists $x_{s}\in\xbf'\setminus\wbf_{\xbf}$.  By Corollary~\ref{cor:xy_removal}, there exists $\wbf'\bubcov\wbf$ which contains $x_{s}$, and necessarily $\wbf$ is obtained from $\wbf'$ by a right indel deleting $x_{s}$.  Moreover, $x_{s}$ is a letter of both $\ubf$ and $\vbf$, which implies that 
	$\wbf'_{\xbf}$ is a subword of $\xbf'$ and $\ybf'$ is a subword of $\wbf'_{\ybf}$.  Furthermore, $\invset(\ubf_{\wbf})$ is obtained from $\invset(\ubf_{\wbf'})$ (and likewise $\invset(\wbf_{\ubf})$ is obtained from $\invset(\wbf'_{\ubf})$) by deleting all inversions involving $x_{s}$.  Since $\wbf\in W$, we have $\ubf\bubleq\wbf$ and thus $\invset(\ubf_{\wbf})\subseteq\invset(\wbf_{\ubf})$.  Suppose that there is an inversion $(x_{s},y_{t})\in\invset(\ubf_{\wbf'})\setminus\invset(\wbf'_{\ubf})$.  This implies that $y_{t}$ is a letter of $\ubf$ and $\wbf'$, but $y_{t}$ appears before $x_{s}$ in $\ubf$, and after $x_{s}$ in $\wbf'$.  Since $\wbf'\sindel\wbf$, the next letter after $x_{s}$ in $\wbf'$ must be $x_{s'}$ for $s<s'$.  Since $x_{s'}\in\wbf$ we have $x_{s'}\in\ubf$.  Thus, $(x_{s'},y_{t})\in\invset(\ubf_{\wbf})$.  However, $(x_{s'},y_{t})\notin\invset(\wbf_{\ubf})$, because $x_{s'}$ must still come before $y_{t}$ in $\wbf$.  This is a contradiction, and we conclude that $\invset(\ubf_{\wbf'})\subseteq\invset(\wbf'_{\ubf})$, and we obtain $\ubf\bubleq\wbf'$.  The analogous reasoning shows that $\vbf\bubleq\wbf'$, and thus $\wbf'\in W$, contradicting the assumption that $\wbf$ is minimal.  
	
	(b) Now assume that there exists $y_{t}\in\wbf_{\ybf}\setminus\ybf'$.  Corollary~\ref{cor:xy_removal} yields two cases.  Either, there exists $\wbf'\bubcov\wbf$ which does not contain $y_{t}$ (then necessarily $\wbf$ is obtained from $\wbf'$ by a right indel inserting $y_{t}$) or there exists $\wbf''\bubcov\wbf$ with $\invset(\wbf)\setminus\invset(\wbf')=\bigl\{(x,y_{t})\bigr\}$.  In both cases, however, $y_{t}$ is neither a letter of $\ubf$ nor of $\vbf$.
	
	As in the previous case, we get that $\wbf'_{\xbf}$ is a subword of $\xbf'$ and $\ybf'$ is a subword of $\wbf'_{\ybf}$.  We see that $\ubf$ cannot contain an inversion involving $y_{t}$, because $y_{t}\notin\ubf$, and therefore $\invset(\ubf_{\wbf'})=\invset(\ubf_{\wbf})$.  The same argument shows that $\invset(\wbf'_{\ubf})=\invset(\wbf_{\ubf})$, because the only inversions of $\wbf$ that are not inversions of $\wbf'$ involve $y_{t}$.  This proves $\invset(\ubf_{\wbf'})=\invset(\ubf_{\wbf})\subseteq\invset(\wbf_{\ubf})=\invset(\wbf'_{\ubf})$, and therefore $\ubf\bubleq\wbf'$.  The analogous reasoning shows that $\vbf\bubleq\wbf'$, and thus $\wbf'\in W$, contradicting the assumption that $\wbf$ is minimal.
	
	For $\wbf''$, we observe that $\wbf$ and $\wbf''$ have the same support, so that $\wbf''_{\xbf}$ is a subword of $\xbf'$, $\ybf'$ is a subword of $\wbf''_{\ybf}$ and $\invset(\ubf_{\wbf})=\invset(\ubf_{\wbf''})$.  By Corollary~\ref{cor:xy_removal}, $\invset(\wbf'')=\invset(\wbf)\setminus\bigl\{(x,y_{t})\bigr\}$ for some letter $x$ immediately succeeding $y_{t}$ in $\wbf$.  Since $y_{t}\notin\ubf$, we conclude that $\invset(\wbf_{\ubf})=\invset(\wbf''_{\ubf})$.  Since $\ubf\bubleq\wbf$, we obtain $\invset(\ubf_{\wbf''})=\invset(\ubf_{\wbf})\subseteq\invset(\wbf_{\ubf})=\invset(\wbf''_{\ubf})$.  By Lemma~\ref{lem:bubble_order}, we get $\ubf\bubleq\wbf''$.  The same reasoning shows that $\vbf\bubleq\wbf''$ and therefore $\wbf''\in W$, contradicting the assumption that $\wbf$ is minimal.

	We conclude from (a) that $\wbf_{\xbf}=\xbf'$ and from (b) that $\wbf_{\ybf}=\ybf'$, which proves that $\wbf\in\tilde{W}$ as desired.
	
	\medskip
	
	\underline{Claim 2}: The intersection $W\cap\tilde{W}$ has a unique minimal element under $\bubleq$.\\
	Let $\wbf'$ and $\wbf''$ be two distinct minimal elements of $W\cap\tilde{W}$ with respect to $\bubleq$.  This means that $\wbf'$ and $\wbf''$ are incomparable with respect to $\bubleq$, meaning that there exist $x_{s'},x_{s''},y_{t'},y_{t''}$ such that $(x_{s'},y_{t'})\in\invset(\wbf')\setminus\invset(\wbf'')$ and $(x_{s''},y_{t''})\in\invset(\wbf'')\setminus\invset(\wbf')$.  In fact, these letters can be chosen such that $x_{s'}$ and $y_{t'}$ (and likewise $x_{s''}$ and $y_{t''}$) are adjacent in both $\wbf'$ and $\wbf''$.  
	
	Let $\wbf\in\tilde{W}$ be the element in which $x_{s'}$ appears immediately before $y_{t'}$ and $x_{s''}$ appears immediately before $y_{t''}$.  Then, $\wbf\bubcov\wbf'$ and $\wbf\bubcov\wbf''$.  Since $\wbf'$ and $\wbf''$ are minimal in $W\cap\tilde{W}$ it follows that $\wbf\notin W$.  Without loss of generality, we may assume that $\ubf\not\bubleq\wbf$.  Since all elements in $\tilde{W}$ have the same support, Lemma~\ref{lem:bubble_order} implies $\invset(\ubf_{\wbf})\not\subseteq\invset(\wbf_{\ubf})$.  However, since $\wbf\bubleq\wbf'$ and both words have the same set of letters, we have $\invset(\ubf_{\wbf})=\invset(\ubf_{\wbf'})\subseteq\invset(\wbf'_{\ubf})$ and $\invset(\wbf_{\ubf})\subseteq\invset(\wbf'_{\ubf})$.  So, if $(x_{s},y_{t})\in\invset(\ubf_{\wbf})\setminus\invset(\wbf_{\ubf})$, then $(x_{s},y_{t})\in\invset(\wbf'_{\ubf})\setminus\invset(\wbf_{\ubf})$.  Since, in fact, $\wbf\bubcov\wbf'$ it must be that $(x_{s},y_{t})=(x_{s'},y_{t'})$.  But, $(x_{s'},y_{t'})\notin\invset(\wbf'')$, which yields the contradiction $\ubf\not\bubleq\wbf''$.

	\medskip
	
	In conclusion, Claim 1 implies that every minimal element of $W$ lies in $\tilde{W}$, and Claim 2 implies that there is a unique minimal element in $W$, which must then be the join of $\ubf$ and $\vbf$.
\end{proof}

\begin{example}\label{ex:bubble_joins}
	Let $m=n=5$ and consider $\ubf=\rd{x_2x_4}\bl{y_1y_4}\rd{x_5}\bl{y_5}$ and $\vbf=\rd{x_3}\bl{y_1y_3}\rd{x_4x_5}$. Then, $\tilde{W}$ contains all words in $\Shuf(m,n)$ using the letters $\{\rd{x_4},\rd{x_5}\}$ and $\{\bl{y_1},\bl{y_3},\bl{y_4},\bl{y_5}\}$.  The minimal element in $\tilde{W}$ is clearly $\tilde{\wbf}=\rd{x_4x_5}\bl{y_1y_3y_4y_5}$, and we have
	\begin{align*}
		\invset(\ubf_{\tilde{\wbf}}) & = \bigl\{(\rd{x_5},\bl{y_1}),(\rd{x_5},\bl{y_4})\bigr\},\\
		\invset(\vbf_{\tilde{\wbf}}) & = \bigl\{(\rd{x_4},\bl{y_1}),(\rd{x_4},\bl{y_3}),(\rd{x_5},\bl{y_1}),(\rd{x_5},\bl{y_3})\bigr\}.
	\end{align*}
	So, for $\wbf\in\tilde{W}$ to be the join of $\ubf$ and $\vbf$ it must have all of these inversions, which is satisfied when
	\begin{displaymath}
		\ubf\vee\vbf = \wbf = \bl{y_1y_3}\rd{x_4}\bl{y_4}\rd{x_5}\bl{y_5}.
	\end{displaymath}
\end{example}

For $\ubf\in\Shuf(m,n)$ let $\ubf^{\ybf}$ denote the \defn{$\ybf$-filling} of $\ubf$, \ie the word obtained from $\ubf$ by inserting the letters from $\ybf$ which are not present in $\ubf$ as rightmost as possible without changing the relative order of the letters present in $\ubf$.  More precisely, suppose that $\ybf\setminus\ubf_{\ybf}=\{y_{j_{1}},y_{j_{2}},\ldots,y_{j_{s}}\}$ where $j_{1}>j_{2}>\cdots>j_{s}$.  We set $\ubf^{(0)}=\ubf$ and for $i\in[s]$ we create $\ubf^{(i)}$ from $\ubf^{(i-1)}$ by inserting $y_{j_{i}}$ immediately left of $y_{t_{i}}$, where
\begin{displaymath}
	t_{i} = \min\{k\mid k>j_{i} \;\text{and}\;y_{k}\in\ubf^{(i-1)}\}.
\end{displaymath}
When $\ubf^{(i-1)}$ does not contain $y_{k}$ for $k>j_{i}$, then we add $y_{j_{i}}$ at the end of $\ubf^{(i-1)}$.  Finally, we set $\ubf^{\ybf}\defs\ubf^{(s)}$.  Dually, we define $\ubf^{\xbf}$ by inserting the missing letters of $\xbf$ in the analogous manner.

\begin{example}
	Let $m=5$ and $n=6$ and consider $\ubf=\rd{x_2x_3}\bl{y_1y_3}\rd{x_4}\bl{y_5}\rd{x_5}$.  Then $\ybf\setminus\ubf_{\ybf}=\{\bl{y_6},\bl{y_4},\bl{y_2}\}$.  With $\ubf^{(0)}=\ubf$, we get
	\begin{align*}
		\ubf^{(1)} & = \rd{x_2x_3}\bl{y_1y_3}\rd{x_4}\bl{y_5}\rd{x_5}\bl{y_6},\\
		\ubf^{(2)} & = \rd{x_2x_3}\bl{y_1y_3}\rd{x_4}\bl{y_4y_5}\rd{x_5}\bl{y_6},\\
		\ubf^{(3)} & = \rd{x_2x_3}\bl{y_1y_2y_3}\rd{x_4}\bl{y_5}\rd{x_5}\bl{y_6}.
	\end{align*}
	Thus $\ubf^{\ybf}=\ubf^{(3)}\in\Shuf(5,6)$.
\end{example}

A \defn{closure operator} on a poset $(P,\leq)$ is a map $f\colon P\to P$ which is
\begin{itemize}
	\item \defn{idempotent}: $f(f(p))=f(p)$ for all $p\in P$,
	\item \defn{extensive}: $p\leq f(p)$, and 
	\item \defn{monotone}: $p\leq q$ implies $f(p)\leq f(q)$.
\end{itemize}
The fixed points of $f$ are called \defn{closed}.

\begin{lemma}\label{lem:bubble_closure}
	The map $\ubf\mapsto\ubf^{\ybf}$ is a closure operator on $\Bub(m,n)$.
\end{lemma}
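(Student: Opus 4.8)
The plan is to verify the three defining properties of a closure operator --- idempotence, extensivity, and monotonicity --- directly from the construction of the $\ybf$-filling, leaning on Lemma~\ref{lem:bubble_order} to translate claims about $\bubleq$ into claims about restrictions and inversion sets. First I would record two basic structural facts about $\ubf^{\ybf}$: its support is $\ubf_{\xbf}\cup\ybf$ (we add exactly the missing $y$'s), and $\invset(\ubf^{\ybf})=\invset(\ubf)$, since every newly inserted letter $y_{j_i}$ is placed immediately to the left of some $y$-letter (or at the very end), hence never to the right of any $x$-letter and thus creates no new inversion of the form $(x_s,y_t)$. These two facts will be the workhorses throughout.

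For extensivity, $\ubf\bubleq\ubf^{\ybf}$, I would apply Lemma~\ref{lem:bubble_order}: the restriction $(\ubf^{\ybf})_{\xbf}=\ubf_{\xbf}$ is trivially a subword of $\ubf_{\xbf}$; $\ubf_{\ybf}$ is a subword of $(\ubf^{\ybf})_{\ybf}=\ybf$; and for the inversion condition, $\invset(\ubf_{\ubf^{\ybf}})=\invset(\ubf)=\invset(\ubf^{\ybf})\supseteq\invset((\ubf^{\ybf})_{\ubf})$ follows since $\ubf^{\ybf}$ and $\ubf$ agree on all their common letters' relative order. For idempotence, $(\ubf^{\ybf})^{\ybf}=\ubf^{\ybf}$, I would observe that $\ubf^{\ybf}$ already contains every letter of $\ybf$, so the set of $y$-letters to be inserted is empty and the construction does nothing.

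Monotonicity is the main obstacle. Suppose $\ubf\bubleq\vbf$; I must show $\ubf^{\ybf}\bubleq\vbf^{\ybf}$. Using Lemma~\ref{lem:bubble_order} and the structural facts above, the first two conditions reduce to: $(\vbf^{\ybf})_{\xbf}=\vbf_{\xbf}$ is a subword of $\ubf_{\xbf}=(\ubf^{\ybf})_{\xbf}$ (immediate from $\ubf\bubleq\vbf$), and $(\ubf^{\ybf})_{\ybf}=\ybf=(\vbf^{\ybf})_{\ybf}$ (trivially a subword of itself). The crux is the inversion condition $\invset\bigl((\ubf^{\ybf})_{\vbf^{\ybf}}\bigr)\subseteq\invset\bigl((\vbf^{\ybf})_{\ubf^{\ybf}}\bigr)$. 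Since $\ubf^{\ybf}$ and $\vbf^{\ybf}$ share all their $y$-letters and their common $x$-letters are exactly $\ubf_{\xbf}\cap\vbf_{\xbf}=\vbf_{\xbf}$, this amounts to: for every $x_s\in\vbf_{\xbf}$ and every $y_t$, if $x_s$ comes after $y_t$ in $\ubf^{\ybf}$ then $x_s$ comes after $y_t$ in $\vbf^{\ybf}$. Because $\invset(\ubf^{\ybf})=\invset(\ubf)$ and $\invset(\vbf^{\ybf})=\invset(\vbf)$, this is precisely $\invset(\ubf_{\vbf})\subseteq\invset(\vbf_{\ubf})$, which holds by Lemma~\ref{lem:bubble_order} applied to $\ubf\bubleq\vbf$. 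The one point requiring care is that $\invset\bigl((\ubf^{\ybf})_{\vbf^{\ybf}}\bigr)$ is computed using only the common letters of the two filled words --- but since both contain all of $\ybf$ and their common $x$-letters are those of $\vbf$, the relevant pairs $(x_s,y_t)$ all have $x_s\in\vbf_{\xbf}$ and $y_t\in\ybf$, so each such pair lies in $\invset(\ubf^{\ybf})=\invset(\ubf)$ exactly when it lies in $\invset(\ubf_{\vbf})$, and the inclusion transfers cleanly. I would write this final implication out carefully, as it is where all the bookkeeping about supports and restrictions is used; the rest is routine.
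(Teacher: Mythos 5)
There is a genuine gap, and it sits exactly where you locate "all the bookkeeping": your key structural claim $\invset(\ubf^{\ybf})=\invset(\ubf)$ is false. The filling inserts a missing letter $y_{j}$ immediately to the left of the next-larger $y$-letter already present, and if that $y$-letter itself precedes some $x$-letter, the insertion creates a new inversion. Concretely, for $\ubf=y_{2}x_{1}\in\Shuf(1,2)$ we get $\ubf^{\ybf}=y_{1}y_{2}x_{1}$, so $(x_{1},y_{1})\in\invset(\ubf^{\ybf})\setminus\invset(\ubf)$ (the paper's own example after the definition shows the same phenomenon). This does not hurt your extensivity argument, since there the inversion condition only involves restrictions to the common letters of $\ubf$ and $\ubf^{\ybf}$, i.e.\ to the letters of $\ubf$, where the two inversion sets agree; and idempotence is fine. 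But it breaks monotonicity: an inversion $(x_{s},y_{t})\in\invset\bigl((\ubf^{\ybf})_{\vbf^{\ybf}}\bigr)$ with $y_{t}\notin\ubf$ is not an inversion of $\ubf$ at all, so your reduction to $\invset(\ubf_{\vbf})\subseteq\invset(\vbf_{\ubf})$ simply cannot see it, and these are precisely the hard cases.

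The paper's proof treats exactly this case separately, and you would need an argument of the same kind. If $(x_{s},y_{t})$ is an inversion of $\ubf^{\ybf}$ with $x_{s}\in\vbf$ and $y_{t}\notin\ubf$, then by the rightmost-insertion rule $y_{t}$ can only have landed before $x_{s}$ because some $y_{t'}\in\ubf$ with $t'>t$ already precedes $x_{s}$ in $\ubf$; since $x_{s}\in\vbf$, this gives $(x_{s},y_{t'})\in\invset(\ubf_{\vbf})\subseteq\invset(\vbf_{\ubf})$ by $\ubf\bubleq\vbf$, so $y_{t'}$ precedes $x_{s}$ in $\vbf$ as well (note $y_{t'}\in\vbf$ because $\ubf_{\ybf}$ is a subword of $\vbf_{\ybf}$); finally, the filling of $\vbf$ places $y_{t}$ no later than immediately before $y_{t'}$, hence before $x_{s}$, so $(x_{s},y_{t})\in\invset\bigl((\vbf^{\ybf})_{\ubf^{\ybf}}\bigr)$. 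With this case added (and the false identity removed from the statement of your "structural facts"), your outline matches the paper's proof; without it, the monotonicity step does not go through.
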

\begin{proof}
	Since $\ubf^{\ybf}$ contains all letters of $\ybf$ it is clear that $(\ubf^{\ybf})^{\ybf}=\ubf^{\ybf}$.  Moreover, $\ubf^{\ybf}$ is obtained from $\ubf$ by adding letters from $Y$ without changing or deleting any of the existing inversions.  Thus, $\ubf\bubleq\ubf^{\ybf}$ by Lemma~\ref{lem:bubble_order}.  This takes care of idempotence and extensivity.
	
	Now suppose that $\ubf\bubleq\vbf$, and let us write $\tilde{\ubf}=\ubf^{\ybf}$ and $\tilde{\vbf}=\vbf^{\ybf}$.  Since $\tilde{\ubf}_{\xbf}=\ubf_{\xbf}$ and $\tilde{\vbf}_{\xbf}=\vbf_{\xbf}$ it is clear that $\tilde{\ubf}_{\xbf}$ is a subword of $\tilde{\vbf}_{\xbf}$ because $\ubf_{\xbf}$ is a subword of $\vbf_{\xbf}$ by Lemma~\ref{lem:bubble_order}.  Moreover $\tilde{\ubf}_{\ybf}=\ybf=\tilde{\vbf}_{\ybf}$.  
	
	Choose an inversion $(x_{s},y_{t})\in\invset(\tilde{\ubf}_{\tilde{\vbf}})$.  This implies immediately that $x_{s}$ is a letter of $\tilde{\vbf}$ and thus also of $\vbf$.  If this is an inversion of $\ubf_{\vbf}$, then by Lemma~\ref{lem:bubble_order}, $(x_{s},y_{t})\in\invset(\vbf_{\ubf})\subseteq\invset(\tilde{\vbf}_{\tilde{\ubf}})$.  Otherwise, $y_{t}\notin\ubf$.  By construction, $y_{t}$ can only be inserted before $x_{s}$ if there is some $y_{t'}\in\ubf$ with $t'>t$ that comes before $x_{s}$ in $\ubf$. Now, since $\ubf\bubleq\vbf$, $\vbf$ contains all letters of $\ybf$ that are in $\ubf$; in particular $y_{t'}\in\vbf$.  We have already noted that $x_{s}\in\vbf$ which means that $y_{t}$ appears before $x_{s}$ in $\tilde{\vbf}$.  Thus, $(x_{s},y_{t})\in\invset(\tilde{\vbf}_{\tilde{\ubf}})$.  We have thus established monotonicity.
\end{proof}

\begin{corollary}
	A word $\ubf\in\Shuf(m,n)$ is closed if and only if $\ubf_{\ybf}=\ybf$.%\henri{What is the restriction of $\Bub(m,n)$ to closed words?  Do we care?}
\end{corollary}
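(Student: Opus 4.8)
The plan is to unwind the definition of the $\ybf$-filling; no real work is required beyond that. The key observation is that for \emph{every} $\ubf\in\Shuf(m,n)$ the word $\ubf^{\ybf}$ contains all letters of $\ybf$, hence $\bigl(\ubf^{\ybf}\bigr)_{\ybf}=\ybf$. This was already recorded at the start of the proof of Lemma~\ref{lem:bubble_closure}, and in any case it is immediate from the construction of $\ubf^{(0)},\ubf^{(1)},\dots,\ubf^{(s)}$, since each step adds one of the missing letters $y_{j_i}$ and no step removes a letter.

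Given this, both implications are one line each. First I would argue the ``only if'' direction: if $\ubf$ is closed, then by definition of a closure operator its fixed points satisfy $\ubf=\ubf^{\ybf}$, so $\ubf_{\ybf}=\bigl(\ubf^{\ybf}\bigr)_{\ybf}=\ybf$ by the observation above. For the ``if'' direction, suppose $\ubf_{\ybf}=\ybf$. Then the set $\ybf\setminus\ubf_{\ybf}$ of letters to be inserted in the construction of $\ubf^{\ybf}$ is empty, so $s=0$ and the construction terminates immediately with $\ubf^{\ybf}=\ubf^{(0)}=\ubf$; that is, $\ubf$ is a fixed point of the closure operator, hence closed.

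There is no genuine obstacle here: the statement is a direct reformulation of which words are left untouched by the $\ybf$-filling. The only thing to be slightly careful about is to cite (or re-derive in a sentence) the fact that $\ubf^{\ybf}$ always has full $\ybf$-support, which is the crux of the ``only if'' direction and is already available from Lemma~\ref{lem:bubble_closure}.
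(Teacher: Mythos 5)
Your proof is correct and is exactly the intended (essentially definitional) argument; the paper leaves this corollary without proof precisely because it follows immediately from the construction of the $\ybf$-filling, as you observe.
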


We may use the previously defined $\ybf$-filling to characterize the joins in $\Bub(m,n)$.

\begin{lemma}\label{lem:bubble_joins_fill}
	Let $\ubf,\vbf\in\Shuf(m,n)$, and define $\tilde{\wbf}=x_{j_{1}}x_{j_{2}}\cdots x_{j_{s}} y_{k_{1}}y_{k_{2}}\cdots y_{k_{t}}$, where $\ubf_{\xbf}\cap\vbf_{\xbf}=\{x_{j_{1}},x_{j_{2}},\ldots,x_{j_{s}}\}$ and $\ubf_{\ybf}\cup\vbf_{\ybf}=\{y_{k_{1}},y_{k_{2}},\ldots,y_{k_{t}}\}$.  The join $\wbf=\ubf\vee\vbf$ in $\Bub(m,n)$ is uniquely determined by
	\begin{itemize}
		\item $\wbf_{\xbf} = \tilde{\wbf}_{\xbf}$,
		\item $\wbf_{\ybf} = \tilde{\wbf}_{\ybf}$, and
		\item $\invset(\wbf) = \invset\bigl((\ubf^{\ybf})_{\tilde{\wbf}}\bigr)\cup\invset\bigl((\vbf^{\ybf})_{\tilde{\wbf}}\bigr)$.
	\end{itemize}
\end{lemma}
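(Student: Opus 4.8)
The plan is to identify $\wbf=\ubf\vee\vbf$ by combining the machinery from the proof of Theorem~\ref{thm:bubble_lattice} with the closure operator of Lemma~\ref{lem:bubble_closure}. Throughout write $\xbf'=\ubf_{\xbf}\cap\vbf_{\xbf}$ and $\ybf'=\ubf_{\ybf}\cup\vbf_{\ybf}$, so that $\tilde{\wbf}$ is exactly the bottom element of the distributive lattice $\tilde{W}=\tilde{W}(\xbf',\ybf')$ of Proposition~\ref{prop:bubble_same_support_distributive}, and let $W=\{\zbf\mid\ubf\bubleq\zbf,\ \vbf\bubleq\zbf\}$. By Theorem~\ref{thm:bubble_lattice} and Claim~1 of its proof, $\wbf$ exists and lies in $\tilde{W}$; since every element of $\tilde{W}$ has $X$-support $\xbf'$ and $Y$-support $\ybf'$, this already gives $\wbf_{\xbf}=\tilde{\wbf}_{\xbf}$ and $\wbf_{\ybf}=\tilde{\wbf}_{\ybf}$, while Claim~2 identifies $\wbf$ with the unique minimal element of $W\cap\tilde{W}$. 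A short support computation shows that the common support of $\ubf^{\ybf}$ with $\tilde{\wbf}$ (and likewise of $\vbf^{\ybf}$ with $\tilde{\wbf}$) is exactly $\xbf'\cup\ybf'$, so $(\ubf^{\ybf})_{\tilde{\wbf}}$ and $(\vbf^{\ybf})_{\tilde{\wbf}}$ both lie in $\tilde{W}$. Because in $\tilde{W}$ the order is containment of inversion sets (Lemma~\ref{lem:bubble_order}) and $\invset$ realizes $\tilde{W}$ as the lattice of order ideals of a poset on $\xbf'\times\ybf'$ (two letters $x_{j_a},y_{k_b}$ being forced in a fixed order once $x_{j_a},y_{k_b}$ are), joins in $\tilde{W}$ are unions of inversion sets; in particular $I\defs\invset\bigl((\ubf^{\ybf})_{\tilde{\wbf}}\bigr)\cup\invset\bigl((\vbf^{\ybf})_{\tilde{\wbf}}\bigr)$ is the inversion set of a well-defined word $\wbf^{*}\in\tilde{W}$, namely $\wbf^{*}=(\ubf^{\ybf})_{\tilde{\wbf}}\vee_{\tilde{W}}(\vbf^{\ybf})_{\tilde{\wbf}}$. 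Since a shuffle word is determined by its support together with its inversion set, it then suffices to prove $\wbf^{*}=\wbf$.

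First I would verify $\wbf^{*}\in W$. The first two conditions of Lemma~\ref{lem:bubble_order} for $\ubf\bubleq\wbf^{*}$ are immediate from $\wbf^{*}_{\xbf}=\xbf'\subseteq\ubf_{\xbf}$ and $\ubf_{\ybf}\subseteq\ybf'=\wbf^{*}_{\ybf}$. For the inversion condition, note the common support of $\ubf$ and $\wbf^{*}$ is $\xbf'\cup\ubf_{\ybf}$; any $(x_{a},y_{b})\in\invset(\ubf_{\wbf^{*}})$ has $x_{a}\in\xbf'$, $y_{b}\in\ubf_{\ybf}\subseteq\ybf'$, and $y_{b}$ before $x_{a}$ in $\ubf$, hence — the $\ybf$-filling not disturbing the relative order of the letters already present — also in $\ubf^{\ybf}$, so that $(x_{a},y_{b})\in\invset\bigl((\ubf^{\ybf})_{\tilde{\wbf}}\bigr)\subseteq I=\invset(\wbf^{*})$, and since $y_{b}\in\ubf_{\ybf}$ this pair lies in $\invset(\wbf^{*}_{\ubf})$. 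Thus $\ubf\bubleq\wbf^{*}$, and symmetrically $\vbf\bubleq\wbf^{*}$, so $\wbf^{*}\in W\cap\tilde{W}$.

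Next I would show that $\wbf^{*}$ lies below every element of $W\cap\tilde{W}$, which by minimality of $\wbf$ in $W\cap\tilde{W}$ forces $\wbf=\wbf^{*}$. Let $\zbf\in W\cap\tilde{W}$. From $\ubf\bubleq\zbf$ and monotonicity of the $\ybf$-filling (Lemma~\ref{lem:bubble_closure}) we get $\ubf^{\ybf}\bubleq\zbf^{\ybf}$; both words now have full $Y$-support, and their common support is $\xbf'\cup\ybf$. Restricting the inclusion $\invset\bigl((\ubf^{\ybf})_{\zbf^{\ybf}}\bigr)\subseteq\invset\bigl((\zbf^{\ybf})_{\ubf^{\ybf}}\bigr)$ provided by Lemma~\ref{lem:bubble_order} to those pairs whose $Y$-component lies in $\ybf'$, and using that restricting $\zbf^{\ybf}$ back to $\xbf'\cup\ybf'$ recovers $\zbf$ (the filling only added letters outside $\ybf'$, without moving the others), yields $\invset\bigl((\ubf^{\ybf})_{\tilde{\wbf}}\bigr)\subseteq\invset(\zbf)$. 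Symmetrically $\invset\bigl((\vbf^{\ybf})_{\tilde{\wbf}}\bigr)\subseteq\invset(\zbf)$, hence $\invset(\wbf^{*})=I\subseteq\invset(\zbf)$ and $\wbf^{*}\bubleq\zbf$ by Lemma~\ref{lem:bubble_order}. Therefore $\wbf^{*}$ is the minimum of $W\cap\tilde{W}$, i.e. $\wbf^{*}=\ubf\vee\vbf=\wbf$, which is the claimed description.

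The bulk of the effort, and the most error-prone step, is the bookkeeping: one must check with care that passing to the $\ybf$-filling never alters the relative order of the letters already present, and that restricting a filled word back to the support of $\tilde{\wbf}$ returns the original word, so that inversions may be transported between $\ubf$, $\ubf^{\ybf}$, and $(\ubf^{\ybf})_{\tilde{\wbf}}$ (and likewise for $\vbf$ and for $\zbf$) without slippage. The only genuinely structural ingredient is that the union $I$ of the two inversion sets is itself realized inside $\tilde{W}$; this is precisely where the distributivity of $\tilde{W}$ from Proposition~\ref{prop:bubble_same_support_distributive} (equivalently, its order-ideal description) enters, and without it the displayed formula for $\invset(\wbf)$ would not even be meaningful.
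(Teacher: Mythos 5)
Your proposal is correct and follows essentially the same route as the paper: both pass to the sublattice $\tilde{W}$ from the proof of Theorem~\ref{thm:bubble_lattice}, use the $\ybf$-filling of Lemma~\ref{lem:bubble_closure} and the restriction to $\tilde{\wbf}$, and realize the join inside $\tilde{W}$ as the union of the two inversion sets. The only difference is presentational: where the paper asserts that $(\ubf^{\ybf})_{\tilde{\wbf}}$ is the least element of $\tilde{W}$ above $\ubf$ and joins the two projections, you instead build the candidate word from the union of inversion sets and verify directly (via monotonicity of the closure) that it is the minimum of $W\cap\tilde{W}$ — a slightly more detailed check of the same argument.
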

\begin{proof}
	Let $\tilde{W} = \bigl\{\wbf'\in\Shuf(m,n)\mid \wbf'_{\xbf}=\tilde{\wbf}_{\xbf}\;\text{and}\;\wbf'_{\ybf}=\tilde{\wbf}_{\ybf}\bigr\}$.  This is the same set that was used in the proof of Theorem~\ref{thm:bubble_lattice}.  We abbreviate $\tilde{\ubf}=(\ubf^{\ybf})_{\tilde{\wbf}}$ and $\tilde{\vbf}=(\vbf^{\ybf})_{\tilde{\wbf}}$.  Then, $\tilde{\ubf},\tilde{\vbf}\in\tilde{W}$.  By Lemma~\ref{lem:bubble_closure}, $\ubf\bubleq\ubf^{\ybf}$ and $\vbf\bubleq\vbf^{\ybf}$.  Since $\tilde{\ubf}$ is obtained from $\ubf^{\ybf}$ by deleting some $x$'s and some $y$'s together with their corresponding inversions, it follows that $\ubf\bubleq\tilde{\ubf}\bubleq\ubf^{\ybf}$ and $\tilde{\ubf}\in\tilde{W}$.
% 	\henri{Double-check.}.  
	In fact, $\tilde{\ubf}$ is the smallest element in $\tilde{W}$ which is above $\ubf$.   The analogous property holds for $\tilde{\vbf}$.
	
	Since $\tilde{\ubf}$ and $\tilde{\vbf}$ have the same set of letters, we obtain $\tilde{\ubf}\vee\tilde{\vbf}\in\tilde{W}$ by adding the inversions from $\tilde{\vbf}$ to those of $\tilde{\ubf}$ (or vice versa).  By construction of $\tilde{\wbf}$, we get that $\wbf=\tilde{\ubf}\vee\tilde{\vbf}$.  We have thus shown that $\wbf$ has the desired properties.
\end{proof}

\begin{example}
	Consider again $\ubf=\rd{x_2x_4}\bl{y_1y_4}\rd{x_5}\bl{y_5}$ and $\vbf=\rd{x_3}\bl{y_1y_3}\rd{x_4x_5}$ as elements of $\Shuf(5,5)$.  We get $\tilde{\wbf}=\rd{x_4x_5}\bl{y_1y_3y_4y_5}$.  Filling $\ubf$ and $\vbf$ with the missing letters of $\ybf$ and then restricting to $\tilde{\wbf}$ yields:
	\begin{align*}
		\ubf^{\ybf} & = \rd{x_2x_4}\bl{y_1y_2y_3y_4}\rd{x_5}\bl{y_5}, & \ubf^{\ybf}_{\tilde{\wbf}} & = \rd{x_4}\bl{y_1y_3y_4}\rd{x_5}\bl{y_5},\\
		\vbf^{\ybf} & = \rd{x_3}\bl{y_1y_2y_3}\rd{x_4x_5}\bl{y_4y_5}, & \vbf^{\ybf}_{\tilde{\wbf}} & = \bl{y_1y_3}\rd{x_4x_5}\bl{y_4y_5}.
	\end{align*}
	We have
	\begin{align*}
		\invset(\ubf^{\ybf}_{\tilde{\wbf}}) & = \bigl\{(\rd{x_5},\bl{y_{1}}),(\rd{x_5},\bl{y_3}),(\rd{x_5},\bl{y_4})\bigr\},\\
		\invset(\vbf^{\ybf}_{\tilde{\wbf}}) & = \bigl\{(\rd{x_4},\bl{y_{1}}),(\rd{x_4},\bl{y_3}),(\rd{x_5},\bl{y_1}),(\rd{x_5},\bl{y_3})\bigr\},
	\end{align*}
	and therefore $\invset(\wbf)=\invset(\ubf^{\ybf}_{\tilde{\wbf}})\cup\invset(\vbf^{\ybf}_{\tilde{\wbf}})$, where $\wbf=\bl{y_1y_3}\rd{x_4}\bl{y_4}\rd{x_5}\bl{y_5}$ is the join of $\ubf$ and $\vbf$ computed in Example~\ref{ex:bubble_joins}.
\end{example}

\subsection{Extremality of $\Bub(m,n)$}

We now prove some further lattice-theoretic properties of $\Bub(m,n)$.  We start by establishing extremality, and first describe the join-irreducible elements of $\Bub(m,n)$.

\begin{lemma}\label{lem:bubble_irreducibles}
	Let $\ubf\in\Shuf(m,n)$.  We have $\ubf\in\JI\bigl(\Bub(m,n)\bigr)$ if and only if either 
% 	Let $\ubf\in\JI\bigl(\Bub(m,n)\bigr)$.  Then either 
	$\ubf=\xbf_{\hat{\imath}}$ for some $i\in[m]$ or $\ubf_{\xbf}=\xbf$ and $\ubf_{\ybf}=y_{j}$ for some $j\in[n]$.
\end{lemma}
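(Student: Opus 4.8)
The plan is to characterize the join-irreducible elements of $\Bub(m,n)$ by combining the Hasse-regularity established in Lemma~\ref{lem:bubble_hasse_regular} with the explicit description of covers from Lemma~\ref{lem:bubble_covers} and Corollary~\ref{cor:xy_removal}. Recall that $\ubf$ is join-irreducible precisely when it covers a unique element. By Lemma~\ref{lem:bubble_hasse_regular}, the elements covered by $\ubf$ are in bijection with the letters of $\xbf$ \emph{not} in $\ubf$ together with the letters of $\ybf$ that \emph{are} in $\ubf$ (dualizing via Lemma~\ref{lem:bubble_duality}); more precisely, if $\ubf_{\xbf}$ has $a$ letters and $\ubf_{\ybf}$ has $b$ letters, then $\ubf$ covers exactly $(m-a)+b$ elements. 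So $\ubf\in\JI\bigl(\Bub(m,n)\bigr)$ if and only if $(m-a)+b=1$, which leaves exactly two cases: $a=m$ and $b=1$, or $a=m-1$ and $b=0$.

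First I would treat the case $a=m-1$, $b=0$: here $\ubf$ contains no letter of $\ybf$ and is missing exactly one letter $x_i$ of $\xbf$, and since $\ubf$ is order-preserving and has empty $\ybf$-part, $\ubf=x_1\cdots x_{i-1}x_{i+1}\cdots x_m=\xbf_{\hat{\imath}}$. Conversely, each such word has $a=m-1$, $b=0$, hence covers the unique element $\xbf$ (deleting nothing below, only the right-indel inserting nothing — actually it covers $\xbf_{\hat{\imath}}$ restricted; one checks directly via Corollary~\ref{cor:xy_removal} that the unique lower cover is obtained by deleting $x_i$ is impossible since $x_i\notin\ubf$, so the unique element covered must come from the $\ybf$-side, but $b=0$ forces... ) — I would instead simply invoke the count $(m-a)+b=1$ directly, so no case analysis on \emph{which} cover is needed.

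For the case $a=m$, $b=1$: here $\ubf_{\xbf}=\xbf$ and $\ubf_{\ybf}=y_j$ for a single $j\in[n]$; conversely any such word again has $(m-a)+b=0+1=1$ and is join-irreducible. This handles both directions at once, so the ``main obstacle'' is really just making the bijection in the proof of Lemma~\ref{lem:bubble_hasse_regular} precise enough that ``number of elements covered'' is cleanly $(m-a)+b$: this was already recorded there (the down-degree computed via duality), so the proof reduces to solving $(m-a)+b=1$ in nonnegative integers subject to $0\le a\le m$, $0\le b\le n$. I would then observe that for the solution $a=m-1,b=0$ the order-preserving condition pins down $\ubf=\xbf_{\hat{\imath}}$ uniquely, and for $a=m,b=1$ the word is $x_1\cdots x_{i}\,y_j\,x_{i+1}\cdots x_m$ for some insertion point, all of which have $\ubf_{\xbf}=\xbf$ and $\ubf_{\ybf}=y_j$ — matching the claimed characterization verbatim.

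I expect no genuine obstacle here; the only care needed is to state the down-degree count from Lemma~\ref{lem:bubble_hasse_regular} as a standalone fact (which the paper has essentially done in its proof, computing ``$(m-a)+b$ distinct elements that are covered by $\ubf$'') and then to note that join-irreducibility is equivalent to this down-degree being exactly $1$. The converse direction (that the listed words really are join-irreducible) is immediate from the same count, since each listed word has $(m-a,b)\in\{(1,0),(0,1)\}$, giving down-degree $1$.
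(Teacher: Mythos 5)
Your proposal is correct and follows essentially the same route as the paper: the paper's proof also reduces join-irreducibility to having exactly one lower cover and invokes Corollary~\ref{cor:xy_removal} (equivalently, the down-degree count $(m-a)+b$ extracted from the proof of Lemma~\ref{lem:bubble_hasse_regular}) to conclude that this forces either $a=m-1,b=0$ or $a=m,b=1$. The garbled aside in your second paragraph is harmless since, as you note, the count alone settles both directions.
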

\begin{proof}
	This is immediate from Corollary~\ref{cor:xy_removal}, because every letter of $\xbf$ not present in $\ubf$ and every letter of $\ybf$ present in $\ubf$ accounts for a lower cover.  So $\ubf$ is join-irreducible if and only if either $\ubf$ consists of all but one letter of $\xbf$ and none of $\ybf$, or it consists of all letters of $\xbf$ and precisely one letter of $\ybf$.  There are clearly $m$ possible join irreducibles of the first type and $n(m+1)$ of the second, because there are $m+1$ positions where the letter of $\ybf$ can be inserted into $\xbf$.
\end{proof}

\begin{corollary}\label{cor:bubble_irreducibles_count}
	For $m,n\geq 0$ we have 
	\begin{displaymath}
		\bigl\lvert\JI\bigl(\Bub(m,n)\bigr)\bigr\rvert=mn+m+n=\bigl\lvert\MI\bigl(\Bub(m,n)\bigr)\bigr\rvert.
	\end{displaymath}
\end{corollary}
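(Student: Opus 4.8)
The plan is to read off both counts from results already established. For the join-irreducibles, I would simply tally the two families described in Lemma~\ref{lem:bubble_irreducibles}. The words of the form $\xbf_{\hat{\imath}}$ with $i\in[m]$ contribute exactly $m$ elements. The words with $\ubf_{\xbf}=\xbf$ and $\ubf_{\ybf}=y_{j}$ for some $j\in[n]$ are determined by the choice of $j$ (giving $n$ options) together with the position at which the single letter $y_{j}$ is inserted into $\xbf=x_{1}x_{2}\cdots x_{m}$; there are $m+1$ such positions (before $x_{1}$, between consecutive $x_{i}$'s, or after $x_{m}$), and distinct positions yield distinct shuffle words. Hence this family has $n(m+1)$ members, and the two families are disjoint since the first has $x$-support of size $m-1$ and no $y$-letters while the second has $x$-support of size $m$. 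Therefore
\begin{displaymath}
	\bigl\lvert\JI\bigl(\Bub(m,n)\bigr)\bigr\rvert = m + n(m+1) = mn + m + n.
\end{displaymath}

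For the meet-irreducibles, I would invoke Lemma~\ref{lem:bubble_duality}: since $\Bub(m,n)$ is dual to $\Bub(n,m)$, and meet-irreducibles of a lattice are precisely the join-irreducibles of its dual, we get $\bigl\lvert\MI\bigl(\Bub(m,n)\bigr)\bigr\rvert = \bigl\lvert\JI\bigl(\Bub(n,m)\bigr)\bigr\rvert$. Applying the count from the previous paragraph with the roles of $m$ and $n$ swapped gives $\bigl\lvert\JI\bigl(\Bub(n,m)\bigr)\bigr\rvert = nm + n + m = mn + m + n$, which matches the count for join-irreducibles.

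There is essentially no obstacle here: the statement is an immediate bookkeeping consequence of Lemma~\ref{lem:bubble_irreducibles} combined with the duality of Lemma~\ref{lem:bubble_duality}. The only point requiring a moment's care is the verification that the single $y$-letter can be inserted into $\xbf$ in exactly $m+1$ distinct ways and that all resulting words are genuinely distinct shuffle words, which is clear from the definition of shuffle words. I would keep the write-up to a few sentences, citing Lemmas~\ref{lem:bubble_irreducibles} and \ref{lem:bubble_duality} and noting that the symmetry of the formula $mn+m+n$ in $m$ and $n$ makes the two counts agree automatically.
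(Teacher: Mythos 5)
Your proposal is correct and matches the paper's reasoning: the count $m+n(m+1)=mn+m+n$ for join-irreducibles is exactly the tally carried out in the proof of Lemma~\ref{lem:bubble_irreducibles}, and the meet-irreducible count follows, as you say, from the duality $\Bub(m,n)\cong\Bub(n,m)^{d}$ of Lemma~\ref{lem:bubble_duality} together with the symmetry of the formula in $m$ and $n$. Nothing further is needed.
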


\begin{corollary}\label{cor:bubble_irreducibles_poset}
	The poset of join-irreducibles, $\bigl(\JI\bigl(\Bub(m,n)\bigr),\bubleq\bigr)$, of $\Bub(m,n)$ is a disjoint union of an $m$-antichain and $n$-many $m+1$ chains.
\end{corollary}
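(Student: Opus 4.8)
The plan is to work directly with the classification of $\JI\bigl(\Bub(m,n)\bigr)$ provided by Lemma~\ref{lem:bubble_irreducibles}: its members split into the ``$X$-type'' words $\xbf_{\hat{\imath}}$ for $i\in[m]$, and the ``$Y$-type'' words $\ubf$ with $\ubf_{\xbf}=\xbf$ and $\ubf_{\ybf}=y_{j}$ for some $j\in[n]$. The restriction of $\bubleq$ to this set is then determined by checking comparabilities case by case via the criterion of Lemma~\ref{lem:bubble_order}.

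First I would treat the $X$-type elements: if $i\neq i'$, then $\xbf_{\hat{\imath}}$ and $\xbf_{\hat{\imath'}}$ are distinct words of the same length $m-1$, so neither is a subword of the other, and the first bullet of Lemma~\ref{lem:bubble_order} shows they are incomparable. Hence these $m$ elements form an antichain, and moreover (by the arguments below) are incomparable to everything else, so they constitute the claimed $m$-antichain summand. Next, fix $j\in[n]$ and consider the $m+1$ words of $Y$-type with $\ubf_{\ybf}=y_{j}$: these are exactly the words in which $y_{j}$ sits immediately before $x_{p}$ for some $p\in[m]$, or at the very end ($p=m+1$). Since these words all share the same support, Lemma~\ref{lem:bubble_order} reduces $\bubleq$ among them to inclusion of inversion sets, and the word with $y_{j}$ before $x_{p}$ has inversion set $\{(x_{p},y_{j}),(x_{p+1},y_{j}),\ldots,(x_{m},y_{j})\}$. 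These sets are totally ordered by inclusion (strictly decreasing as $p$ increases, with the $p=m+1$ word carrying no inversions), so this family is an $(m+1)$-chain.

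It then remains to rule out any relation between the various pieces. If $j\neq j'$, a $Y$-type word with $Y$-letter $y_{j}$ and one with $Y$-letter $y_{j'}$ are incomparable, since $y_{j}$ is not a subword of $y_{j'}$ and conversely, so the second bullet of Lemma~\ref{lem:bubble_order} rules out comparability in either direction. Finally, an $X$-type word $\xbf_{\hat{\imath}}$ and a $Y$-type word $\vbf$ are incomparable: $\xbf_{\hat{\imath}}\bubleq\vbf$ would require $\vbf_{\xbf}=\xbf$ to be a subword of $(\xbf_{\hat{\imath}})_{\xbf}$, impossible for length reasons, whereas $\vbf\bubleq\xbf_{\hat{\imath}}$ would require $\vbf_{\ybf}=y_{j}$ to be a subword of $(\xbf_{\hat{\imath}})_{\ybf}=\epsilon$. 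Assembling these observations shows that $\bigl(\JI\bigl(\Bub(m,n)\bigr),\bubleq\bigr)$ is the disjoint union of the $m$-antichain of $X$-type words together with, for each $j\in[n]$, the $(m+1)$-chain of $Y$-type words with $Y$-letter $y_{j}$; the total count $m+n(m+1)=mn+m+n$ matches Corollary~\ref{cor:bubble_irreducibles_count}. There is no genuine obstacle here: the result is mechanical once the irreducibles are classified, and the only point warranting a little care is correctly reading off the inversion sets of the $Y$-type words with fixed $y_{j}$ to see that they form a chain under inclusion.
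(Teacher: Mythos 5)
Your proof is correct and follows essentially the same route as the paper: both start from the classification of join-irreducibles in Lemma~\ref{lem:bubble_irreducibles}, identify the $m$-antichain of words $\xbf_{\hat{\imath}}$, and recognize each fixed-support family $\{x_{1},\ldots,x_{m},y_{j}\}$ as an $(m+1)$-element chain, the only cosmetic difference being that you verify the chain via inclusion of inversion sets while the paper exhibits successive covers by transpositions. Your explicit check that the various pieces are mutually incomparable is a point the paper leaves implicit, so no gap on your side.
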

\begin{proof}
	The $m$-antichain consists of all the words $\xbf_{\hat{\imath}}$ for $i\in[m]$.  The $i$-th chain consists of the words with support $\{x_{1},x_{2},\ldots,x_{m},y_{i}\}$.  Clearly, the smallest element in this chain is $\xbf y_{i}$ and the greatest one is $y_{i}\xbf$.  Thus, if we set $\ubf^{(0)}=\xbf$, $\ubf^{(1)}=\xbf y_{i}$ and $\ubf^{(j+1)}$ is the join-irreducible shuffle word in which $y_{i}$ appears immediately before $x_{m+1-j}$ for $j\in[m]$, then we get $\ubf^{(0)}\bubcov\ubf^{(1)}\bubcov\ubf^{(2)}\bubcov\cdots\bubcov\ubf^{(m+1)}$ and $\ubf^{(1)}$ is obtained from $\ubf^{(0)}$ by inserting $y_{i}$ and $\invset(\ubf^{(j+1)})\setminus\invset(\ubf^{(j)})=\bigl\{(x_{m+1-j},y_{i})\bigr\}$.
\end{proof}

\begin{theorem}\label{thm:bubble_extremal}
	For $m,n\geq 0$, the lattice $\Bub(m,n)$ is extremal.
\end{theorem}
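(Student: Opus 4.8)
The plan is to pin down the length of $\Bub(m,n)$. By Corollary~\ref{cor:bubble_irreducibles_count} we have $\bigl\lvert\JI\bigl(\Bub(m,n)\bigr)\bigr\rvert=mn+m+n=\bigl\lvert\MI\bigl(\Bub(m,n)\bigr)\bigr\rvert$, so in view of inequality~\eqref{eq:lattice_length_irreds} it suffices to show that $\Bub(m,n)$ possesses a maximal chain of length $mn+m+n$; extremality is then immediate from the definition. By Corollary~\ref{cor:bubble_bounded} any maximal chain runs from $\xbf$ to $\ybf$, and by Lemma~\ref{lem:bubble_covers} a chain of covers is the same thing as a sequence of forward transpositions and right indels. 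So the whole task is to write down such a sequence from $\xbf$ to $\ybf$ with exactly $mn+m+n$ steps.

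I would build the chain in three blocks. \textbf{(i) Inserting all of $\ybf$.} Starting from $\xbf=x_{1}\cdots x_{m}$, insert $y_{n},y_{n-1},\ldots,y_{1}$ in this order; each insertion places the new letter as far right as possible (at the end, or just before the previously inserted $y$), so each is a right indel. This produces $n$ covers and ends at $\xbf\ybf=x_{1}\cdots x_{m}y_{1}\cdots y_{n}$, whose inversion set is empty. \textbf{(ii) Sorting by transpositions.} From $\xbf\ybf$, first carry $y_{1}$ leftward past $x_{m},x_{m-1},\ldots,x_{1}$ by $m$ consecutive forward transpositions (it comes to rest at the front), then carry $y_{2}$ past all of $x_{1},\ldots,x_{m}$ (it comes to rest immediately after $y_{1}$), and continue with $y_{3},\ldots,y_{n}$. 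Every swap is a legal forward transposition, and $\invset$ gains exactly one pair at each step, so this block has $mn$ covers and ends at $\ybf\xbf=y_{1}\cdots y_{n}x_{1}\cdots x_{m}$, whose inversion set is all of $X\times Y$. \textbf{(iii) Deleting all of $\xbf$.} From $\ybf\xbf$, delete $x_{1},x_{2},\ldots,x_{m}$ in this order; at each stage the letter being removed is immediately followed by the next surviving $x$ (or is the final letter of the word), so each deletion is a right indel. This block has $m$ covers and ends at $\ybf$. Concatenating the three blocks gives a maximal chain with $n+mn+m$ covers.

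The one place that deserves a careful check is block (ii): one must verify that at every step the pair being swapped is an adjacent $X$-letter immediately before a $Y$-letter (so the move is a genuine forward transposition) and that the block therefore contributes $mn$ distinct covers rather than fewer. This is routine, since $\invset$ strictly grows along the block; alternatively, all words in block (ii) have the same support, so by Proposition~\ref{prop:bubble_same_support_distributive} that block lives inside the weak-order interval below the permutation that moves the block $y_{1}\cdots y_{n}$ in front of $x_{1}\cdots x_{m}$, and that permutation has precisely $mn$ inversions, so a chain of $mn$ covers joining the two endpoints exists. With the maximal chain in hand, the length of $\Bub(m,n)$ is at least $mn+m+n$, hence equal to it by~\eqref{eq:lattice_length_irreds}, and the coincidence with $\bigl\lvert\JI\bigl(\Bub(m,n)\bigr)\bigr\rvert$ and $\bigl\lvert\MI\bigl(\Bub(m,n)\bigr)\bigr\rvert$ from Corollary~\ref{cor:bubble_irreducibles_count} means exactly that $\Bub(m,n)$ is extremal. (When $m=0$ or $n=0$ one or two of the blocks is empty and the construction degenerates correctly, consistent with $\Bub$ being Boolean in those cases.)
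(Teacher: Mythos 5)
Your proof is correct and follows essentially the same route as the paper: exhibit a maximal chain from $\xbf$ to $\ybf$ consisting of $n$ right indels inserting all of $\ybf$, then $mn$ forward transpositions carrying each $y_{i}$ across $\xbf$, then $m$ right indels deleting all of $\xbf$, and combine the resulting lower bound on the length with \eqref{eq:lattice_length_irreds} and Corollary~\ref{cor:bubble_irreducibles_count}. The only differences are immaterial choices of insertion/deletion order within the first and last blocks.
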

\begin{proof}
	By Lemma~\ref{lem:bubble_irreducibles}, it remains to show that $\Bub(m,n)$ has length $(m+1)n+m$.  Let $\ubf^{(0)}\defs\xbf$ and for $i\in[m]$ we set $\ubf^{(i)}\defs \ubf^{(i-1)}y_{i}$.  Then, we set $\wbf^{(0,1)}\defs\xbf\ybf=\ubf^{(n)}$.  It is straightforward to check that $\ubf^{(i-1)}\sindel\ubf^{(i)}$ and thus $\ubf^{(i-1)}\bubcov\ubf^{(i)}$ by Lemma~\ref{lem:bubble_covers}.
	
	For $i\in[m]$ we construct $\wbf^{(i,1)}$ from $\wbf^{(i-1,1)}$ by transposing $x_{m+i-1}$ with $y_{1}$.  Then, $\wbf^{(m,1)}=y_{1}\xbf y_{2}y_{3}\cdots y_{n}$.  Inductively, we set $\wbf^{(0,j)}\defs\wbf^{(m,j-1)}$ for $j>1$, and we construct $\wbf^{(i,j)}$ from $\wbf^{(i-1,j)}$ by transposing $x_{m+1-i}$ with $y_{j}$.  Then, we have $\wbf^{(i-1,j)}\transpose\wbf^{(i,j)}$ and thus $\wbf^{(i-1,j)}\bubcov\wbf^{(i,j)}$ by Lemma~\ref{lem:bubble_covers}.%\henri{Check indices.}.
	
	Finally, we set $\vbf^{(0)}\defs\ybf\xbf=\wbf^{(m,n)}$, and for $j\in[n]$ we construct $\vbf^{(j)}$ from $\vbf^{(j-1)}$ by deleting its last letter (which is $x_{m-i+1}$).  Then, $\vbf^{(j-1)}\sindel\vbf^{(j)}$ and thus $\vbf^{(j-1)}\bubcov\vbf^{(j)}$ by Lemma~\ref{lem:bubble_covers}.  Since $\vbf^{(n)}=\ybf$ we have constructed a maximal chain of $\Bub(m,n)$ with $mn+m+n+1$ elements.  Hence, the length of $\Bub(m,n)$ is at least $mn+m+n$.  However, \eqref{eq:lattice_length_irreds} and Corollary~\ref{cor:bubble_irreducibles_count} imply that the length of $\Bub(m,n)$ cannot exceed $mn+m+n$, which completes the proof.
\end{proof}

\begin{remark}
	The maximal chain constructed in the proof of Theorem~\ref{thm:bubble_extremal} first adds all letters of $\ybf$ to $\xbf$ in increasing order, then, for $i\in[n]$, it transposes $y_{i}$ all the way across $\xbf$, and then deletes all letters of $\xbf$ in decreasing order.
\end{remark}

\subsection{Interval-Constructability of $\Bub(m,n)$}

\begin{figure}
	\centering
	\includegraphics[page=6,scale=1]{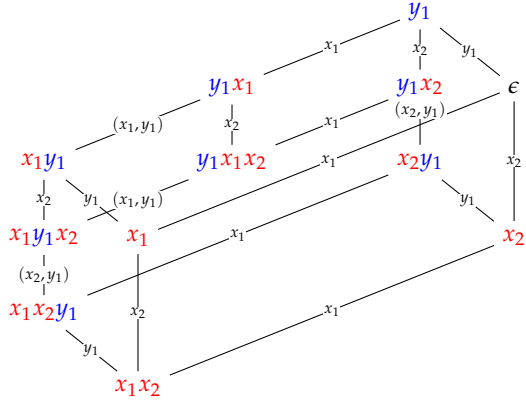}
	\caption{The poset $\Bub(2,1)$ labeled by $\lambda$.}
	\label{fig:bubble_21_labeled}
\end{figure}

Using the description of the covering pairs in $\Bub(m,n)$ in Corollary~\ref{cor:xy_removal}, we define the following edge labeling of $\Bub(m,n)$:
\begin{equation}\label{eq:bubble_labeling}
	\lambda(\ubf,\vbf) \defs 
	\begin{cases}
		x, & \text{if}\;\ubf\sindel\vbf, \ubf_{\ybf}=\vbf_{\ybf},\ubf_{\xbf}\setminus\vbf_{\xbf}=\{x\},\\
		y, & \text{if}\;\ubf\sindel\vbf, \ubf_{\xbf}=\vbf_{\xbf},\vbf_{\ybf}\setminus\ubf_{\ybf}=\{y\},\\
		(x,y), & \text{if}\;\ubf\transpose\vbf, \invset(\vbf)\setminus\invset(\ubf)=\bigl\{(x,y)\bigr\}.
	\end{cases}
\end{equation}
We prove in this section that this labeling is in fact a CU-labeling of $\Bub(m,n)$.  Figure~\ref{fig:bubble_21_labeled} shows the lattice $\Bub(2,1)$ with this labeling.  We start by investigating the polygonal intervals of $\Bub(m,n)$.

\begin{lemma}
	The polygonal intervals of $\Bub(m,n)$ consist either of four or of five elements.
\end{lemma}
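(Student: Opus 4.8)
The plan is to analyze a polygonal interval $I=[\ubf,\wbf]$ directly using the description of covering pairs from Lemma~\ref{lem:bubble_covers} and Corollary~\ref{cor:xy_removal}. Recall that $I$ is polygonal means it is the union of two maximal chains $C_1,C_2$ sharing only $\ubf$ and $\wbf$, and there are exactly two elements $\ubf_1,\ubf_2$ with $\ubf\bubcov\ubf_i\leq\wbf$ and $\ubf_1\vee\ubf_2=\wbf$. First I would observe that each cover $\ubf\bubcov\ubf_i$ is of one of the three types in \eqref{eq:bubble_labeling}: a right indel deleting some $x$, a right indel inserting some $y$, or a transposition swapping an adjacent pair $x,y$. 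So the two bottom covers give a pair of ``moves'' out of $\ubf$, and the interval $I$ is determined (once we know it is polygonal) by how these two moves interact.

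The key step is a case analysis on the types of the two bottom moves $\ubf\to\ubf_1$ and $\ubf\to\ubf_2$. In each case the two moves commute in an essentially unique way: performing the other move from $\ubf_i$ lands in $\wbf$, and one checks there are no intermediate elements except possibly one. Concretely: if both moves are transpositions of \emph{disjoint} adjacent pairs, then $I$ is a square (four elements). If both are transpositions but at overlapping positions — say $\ubf=\cdots x y z \cdots$ where $x\in X$, $y\in Y$, and $z$ is whichever of $X,Y$ — then the two ways of sorting give a pentagon with apex the word in which $y$ has bubbled past both neighbours (five elements); here I should use that within fixed support the interval embeds in the weak order (Proposition~\ref{prop:bubble_same_support_distributive}), where the only polygonal intervals are the square and the pentagon coming from $s t s$ / $s\, s{+}1$-type patterns. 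If one move is an indel and the other a transposition, or if both are indels, one checks via Corollary~\ref{cor:xy_removal} that they commute and no third element sneaks in, giving a square. The delicate subcase is a deletion of $x$ competing with a transposition or insertion that ``wants'' the same slot — but the right-indel convention (delete/insert as far right as possible) forces the diamond to close up after exactly one intermediate step, and the earlier discussion (e.g. $\rd{x_1x_2}\bl{y_1}$ versus $\rd{x_1}\bl{y_1}\rd{x_2}$) shows that such a configuration is \emph{not} itself a cover, so it contributes an honest length-two chain on one side.

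The main obstacle I anticipate is the bookkeeping in the mixed cases — verifying that no polygonal interval has a side of length $\geq 3$, i.e. that once the two bottom moves are fixed, the top is reached in at most two steps on each side. For this I would argue that $\wbf$ is forced: its support is determined (intersect/union the supports of $\ubf_1,\ubf_2$ appropriately, as in the join formula Lemma~\ref{lem:bubble_joins_fill}), and its inversion set is the smallest one compatible with both $\ubf_1$ and $\ubf_2$; then $\len(I)\leq 3$ follows by comparing cardinalities of supports and inversion sets of $\ubf$ and $\wbf$ (they differ by at most two ``units'' total, since each bottom move changes one unit and the interval is polygonal). Ruling out length-$3$ sides — which would give a hexagon or larger — amounts to checking that two competing moves never require three sorting steps to reconcile, which again reduces, after passing to fixed support via Proposition~\ref{prop:bubble_same_support_distributive}, to the classification of polygonal intervals in the weak order of $\mathfrak{S}_k$: these are only squares and pentagons. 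Assembling these observations yields that every polygonal interval of $\Bub(m,n)$ has four or five elements.
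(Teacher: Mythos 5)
Your overall strategy (case analysis on the types of the two bottom covers, as in Lemma~\ref{lem:bubble_covers}) is the same as the paper's, but the execution misidentifies where the five-element intervals come from, and this is a genuine gap rather than a bookkeeping issue. You attribute the pentagons to two transpositions at overlapping positions, but that configuration cannot occur: a transposition requires a letter of $X$ immediately followed by a letter of $Y$, so two transpositions available at the same word $\ubf$ always act on disjoint adjacent pairs (the middle letter would have to lie in both $X$ and $Y$). Consistently with this, the fixed-support subposet is distributive by Proposition~\ref{prop:bubble_same_support_distributive}, so all of its polygonal intervals are squares; also, the polygonal intervals of the weak order on $\Sfrak_{k}$ are squares and hexagons, not squares and pentagons, so the reduction you invoke would not produce pentagons in any case. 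Meanwhile you assert that the mixed cases (one indel, one transposition) ``commute and no third element sneaks in, giving a square,'' but these are exactly the cases that produce the five-element intervals: when the right indel deletes $x_{s'}$ whose successor $x_{s''}$ is the letter being transposed with $y_{t''}$, the word $\ubf''$ obtained by the transposition no longer admits the deletion as a right indel, and one must first transpose $x_{s'}$ past $y_{t''}$ and only then delete it; dually, when the inserted $y_{t'}$ lands immediately before the $y_{t''}$ involved in the transposition, the transposition must be carried out in two steps. In both situations one side of the polygon is a chain of length three, so the interval has five elements.

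This also breaks your concluding step: you propose to rule out sides of length $\geq 3$ on the grounds that they would give a hexagon or larger, but a five-element polygonal interval necessarily has one side of length three and one of length two, so your argument would wrongly exclude the pentagons that the lemma asserts exist. What is actually needed is the finer dichotomy carried out in the paper: for each pair of bottom-cover types, either the two moves commute immediately (square), or exactly one of the two mixed interferences above occurs, in which case the obstructed move factors into precisely two covers and the interval closes up with five elements; no configuration forces three or more steps on both sides.
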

\begin{proof}
	Let $[\ubf,\vbf]$ be a polygonal interval and let $\ubf'$ and $\ubf''$ denote the two upper covers of $\ubf$ in that interval.  We distinguish several cases.
	
	(i) Let $\lambda(\ubf,\ubf')=x_{s'}$ and $\lambda(\ubf,\ubf'')=x_{s''}$.  Then, $\ubf'$ is obtained by deleting $x_{s'}$ and $\ubf''$ is obtained by deleting $x_{s''}$.  Thus, the letter immediately after $x_{s'}$ and $x_{s''}$ is not in $Y$ and is still present in $\ubf'$ and $\ubf''$.  This means that we can delete $x_{s''}$ in $\ubf'$ and $x_{s'}$ in $\ubf''$ and obtain the same element each way.  This means $\ubf'\bubcov\vbf$ and $\ubf''\bubcov\vbf$ such that $\lambda(\ubf',\vbf)=x_{s''}$ and $\lambda(\ubf'',\vbf)=x_{s'}$.
	
	(ii) Let $\lambda(\ubf,\ubf')=y_{t'}$ and $\lambda(\ubf,\ubf'')=y_{t''}$.  Then, $\ubf'$ is obtained by inserting $y_{t'}$ and $\ubf''$ is obtained by inserting $y_{t''}$.  Without loss of generality we may assume that $t'<t''$.  If both $y_{t'}$ and $y_{t''}$ are inserted at the end of $\ubf$ (or immediately before some letter $y$), then we may insert $y_{t''}$ at the end of $\ubf'$ (or immediately before $y$) and we may insert $y_{t'}$ immediately before $y_{t''}$ in $\ubf''$.  In both cases, this yields the same element, and we thus get $\ubf'\bubcov\vbf$ and $\ubf''\bubcov\vbf$ such that $\lambda(\ubf',\vbf)=y_{t''}$ and $\lambda(\ubf'',\vbf)=y_{t'}$.  If $y_{t'}$ is inserted immediately before some letter $y$ in $\ubf$ and $y_{t''}$ is inserted at the end of $\ubf$ (or immediately before some different letter $\tilde{y}$), then we may insert $y_{t''}$ in $\ubf'$ at the end (or before the letter $\tilde{y}$), and we may insert $y'$ in $\ubf''$ before the letter $y$.  Once again, we get the same element by both insertions, and we get $\ubf'\bubcov\vbf$ and $\ubf''\bubcov\vbf$ such that $\lambda(\ubf',\vbf)=y_{t''}$ and $\lambda(\ubf'',\vbf)=y_{t'}$.
	
	(iii) Let $\lambda(\ubf,\ubf')=x_{s'}$ and $\lambda(\ubf,\ubf'')=y_{t''}$.  Then, $\ubf'$ is obtained by deleting $x_{s'}$, which means that there is some letter $x$ immediately right of $x_{s'}$ in $\ubf$.  Moreover, $\ubf''$ is obtained by inserting $y_{t''}$ either at the end of $\ubf$ or before some letter $y$.  These operations do not interfere, so that we can still insert $y_{t''}$ into $\ubf'$ and delete $x_{s'}$ from $\ubf''$, and obtain the same element either way.  We thus get $\ubf'\bubcov\vbf$ and $\ubf''\bubcov\vbf$ such that $\lambda(\ubf',\vbf)=y_{t''}$ and $\lambda(\ubf'',\vbf)=x_{s'}$.
		
	(iv) Let $\lambda(\ubf,\ubf')=x_{s'}$ and $\lambda(\ubf,\ubf'')=(x_{s''},y_{t''})$.  By construction, the letter right after $x_{s'}$ in $\ubf$ is in $X$, while the letter right after $x_{s''}$ is $y_{t''}$.  In particular, $s'\neq s''$.  If $x\neq x_{s''}$, then the corresponding operations do not interfere, meaning that $y_{t''}$ is still right after $x_{s''}$ in $\ubf'$ and $x$ is still next to $x_{s'}$ in $\ubf''$.  Therefore, we can still transpose $x_{s''}$ and $y_{t''}$ in $\ubf'$ and we can still delete $x_{s'}$ from $\ubf''$.  Once again, this yields the same element, and we get $\ubf'\bubcov\vbf$ and $\ubf''\bubcov\vbf$ such that $\lambda(\ubf',\vbf)=(x_{s''},y_{t''})$ and $\lambda(\ubf'',\vbf)=x_{s'}$.  If $x=x_{s''}$, then we can still transpose $x_{s''}$ and $y_{t''}$ in $\ubf'$ to obtain some element $\tilde{\vbf}$. However, we can no longer delete $x_{s'}$ from $\ubf''$, because the letter right after $x_{s'}$ in $\ubf''$ is now $y_{t''}$.  However, we can transpose $x_{s'}$ and $y_{t''}$ to obtain an element $\wbf$ in which $x_{s'}$ and $x_{s''}$ are adjacent again.  We may thus delete $x_{s'}$ in $\wbf$, and obtain some element $\tilde{\wbf}$.  But then, $\tilde{\vbf}=\tilde{\wbf}$, because in comparison to $\ubf$, none of these words contains $x_{s'}$ and both have the inversion $(x_{s''},y_{t''})$.  Since the relative positions of all other entries are the same as in $\ubf$, we conclude that the must be equal.  In fact, these two words are equal to $\vbf$, which yields a polygonal interval with five elements given by the labels $\lambda(\ubf',\vbf)=(x_{s''},y_{t''})$, $\lambda(\ubf'',\wbf)=(x_{s'},y_{t''})$ and $\lambda(\wbf,\vbf)=x_{s'}$.
	
	(v) Let $\lambda(\ubf,\ubf')=y_{t'}$ and $\lambda(\ubf,\ubf'')=(x_{s''},y_{t''})$.  If $y_{t''}$ is not the smallest letter in $\ubf$ which is greater than $y_{t'}$, then these two operations do not interfere, and we may insert $y_{t'}$ into $\ubf''$ and we may transpose $x_{x''}$ and $y_{t''}$ in $\ubf'$ to obtain the same element $\vbf$ satisfying $\ubf'\bubcov\vbf$ and $\ubf''\bubcov\vbf$ such that $\lambda(\ubf',\vbf)=(x_{s''},y_{t''})$ and $\lambda(\ubf'',\vbf)=y_{t'}$.  If $y_{t''}$ is the smallest letter in $\ubf$ which is greater than $y_{t'}$, then $y_{t'}$ is inserted right before $y_{t''}$.  This means that we cannot transpose $x_{s''}$ and $y_{t''}$ in $\ubf'$.  We may, however, transpose $x_{s''}$ and $y_{t'}$ to obtain an element $\tilde{w}$ in which $x_{s''}$ comes immediately before $y_{t''}$ so that we can transpose these two elements to obtain some element $\tilde{\wbf}$.  In $\ubf''$, we may still insert $y_{t'}$ immediately before $y_{t''}$ and we obtain an element $\tilde{\vbf}$.  Once again, we have $\tilde{\vbf}=\tilde{\wbf}$, because comparing to $\ubf$ we have the additional letter $y_{t'}$ and the additional inversion $(x_{s''},y_{t''})$ in both cases, and the relative positions of all other entries are the same. As before, these two words are equal to $\vbf$, so that we get a polygonal interval with five elements given by the labels $\lambda(\ubf',\wbf)=(x_{s''},y_{t'})$, $\lambda(\wbf,\vbf)=(x_{s''},y_{t''})$ and $\lambda(\ubf'',\vbf)=y_{t'}$.
	
	(vi) Let $\lambda(\ubf,\ubf')=(x_{s'},y_{t'})$ and $\lambda(\ubf,\ubf'')=(x_{s''},y_{t''})$.  In $\ubf$, $x_{s'}$ is immediately before $y_{t'}$ and $x_{s''}$ is immediately before $y_{t''}$.  Thus, in $\ubf'$ we can still transpose $x_{s''}$ and $y_{t''}$ and in $\ubf''$ we can still transpose $x_{s'}$ and $y_{t'}$, and obtain the same element $\vbf$ each time.  Therefore, we have $\lambda(\ubf',\vbf)=(x_{s''},y_{t''})$ and $\lambda(\ubf'',\vbf)=(x_{s'},y_{t'})$.
	
	In summary, we may obtain polygonal intervals with five elements from (iv) or (v); all other cases yield polygonal intervals with four elements.
\end{proof}

For later use, let us extract the following properties of $\lambda$ when restricted to polygonal intervals.

\begin{corollary}\label{cor:label_properties}
	Let $\ubf,\vbf\in\Shuf(m,n)$ be such that $[\ubf,\vbf]$ is a polygonal interval in $\Bub(m,n)$.  Moreover, let $\ubf',\ubf''$ be the unique upper covers of $\ubf$ and let $\vbf',\vbf''$ be the unique lower covers of $\vbf$ in $[\ubf,\vbf]$.  Let $C',C''$ be the two maximal chains of $[\ubf,\vbf]$, so that $\ubf',\vbf'\in C'$ and $\ubf'',\vbf''\in C''$.
	\begin{enumerate}
		\item[\rm (i)] $\lambda(\ubf,\ubf')=\lambda(\vbf'',\vbf)$ and $\lambda(\ubf,\ubf'')=\lambda(\vbf',\vbf)$.
		\item[\rm (ii)] The label sequences of $C'$ and $C''$ contain no duplicates.
	\end{enumerate}
	Suppose that $[\ubf,\vbf]$ has five elements, where without loss of generality $\ubf''=\vbf''$.  Then,
	\begin{enumerate}
		\item[\rm (iii)] either $\lambda(\ubf,\ubf')=(x_{s'},y_{t'})$, $\lambda(\ubf',\vbf')=(x_{s''},y_{t'})$, $\lambda(\vbf',\vbf)=x_{s'}$ for $s''<s'$, 
		\item[\rm (iv)] or $\lambda(\ubf,\ubf')=y_{t''}$, $\lambda(\ubf',\vbf')=(x_{s'},y_{t''})$, $\lambda(\vbf',\vbf)=(x_{s'},y_{t'})$ for $t''<t'$.
	\end{enumerate}
\end{corollary}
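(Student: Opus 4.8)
The plan is to read off all four parts directly from the proof of the preceding lemma, which already enumerated every polygonal interval of $\Bub(m,n)$ together with the $\lambda$-labels on each of its two maximal chains. Throughout I write $[\ubf,\vbf]$ for the interval, $\ubf',\ubf''$ for the upper covers of $\ubf$, $\vbf',\vbf''$ for the lower covers of $\vbf$, and $C'$ (resp.\ $C''$) for the maximal chain through $\ubf',\vbf'$ (resp.\ $\ubf'',\vbf''$).

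For part (i): in each of the cases (i)--(vi) of the preceding lemma we exhibited the top of the interval and checked that the top edge of one chain repeats the label of the bottom edge of the other. Concretely, in a four-element interval $\ubf'=\vbf'$ and $\ubf''=\vbf''$, and the computation there gives precisely $\lambda(\ubf,\ubf')=\lambda(\ubf'',\vbf)=\lambda(\vbf'',\vbf)$ and, symmetrically, $\lambda(\ubf,\ubf'')=\lambda(\vbf',\vbf)$. In a five-element interval we may (as in the statement) take $C''$ to be the length-two chain, so that $\vbf''=\ubf''$ is its middle element; the labels recorded at the ends of cases (iv) and (v) then state exactly that the bottom edge of $C'$ equals the top edge $\lambda(\vbf'',\vbf)$ of $C''$, and that the bottom edge $\lambda(\ubf,\ubf'')$ of $C''$ equals the top edge $\lambda(\vbf',\vbf)$ of $C'$. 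That is (i).

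For part (ii) I would avoid the case analysis and argue uniformly. By Lemma~\ref{lem:bubble_covers}, a covering step upward in $\Bub(m,n)$ either deletes a letter of $\xbf$, inserts a letter of $\ybf$, or adds a single inversion. Along any fixed saturated chain each possible label can therefore occur at most once: a label $x_{s}$ records that $x_{s}$ was deleted, after which it cannot be deleted again nor appear in a later inversion label; a label $y_{t}$ records that $y_{t}$ was inserted, and upward steps never delete letters of $\ybf$, so it cannot be inserted again; and a label $(x_{s},y_{t})$ records that the inversion $(x_{s},y_{t})$ was created, which then persists until $x_{s}$ is (possibly) deleted, so it cannot be created twice. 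Hence no saturated chain, in particular neither $C'$ nor $C''$, repeats a label.

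Finally, parts (iii) and (iv) are the transcriptions of the two five-element outcomes --- obtained in cases (iv) and (v) of the preceding lemma --- into the notation $\ubf\lessdot\ubf'\lessdot\vbf'\lessdot\vbf$ for the length-three chain $C'$; the strict inequalities $s''<s'$ and $t''<t'$ come from the adjacency constraints imposed on $\ubf$ in those cases (in case (iv), $x_{s'}$ immediately precedes $x_{s''}$ in $\ubf$, so $s'<s''$ in the lemma's indexing; in case (v), $y_{t'}$ is inserted immediately before $y_{t''}$). The one step demanding care --- and the main source of potential error --- is matching up the subscripts when passing between the notation of the preceding lemma and the $C'$/$C''$ notation here, since the two chains and several of the $x$- and $y$-indices are permuted under this identification; once that dictionary is fixed, all four statements are immediate.
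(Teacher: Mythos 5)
Your route is the same as the paper's: the corollary is stated there with no separate proof, precisely as an extraction from the case analysis in the proof of the preceding lemma, and your readings of (i) and (iv) are correct. Your uniform argument for (ii) is actually a small improvement over reading distinctness off the six cases: along any saturated chain an upward cover never re-inserts a deleted letter of $\xbf$, never deletes an inserted letter of $\ybf$, and an inversion $(x_{s},y_{t})$, once created, can only leave the inversion set through the deletion of $x_{s}$, after which no label involving $x_{s}$ can occur again; so no label repeats on any saturated chain, in particular on $C'$ and $C''$.

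The problem is exactly the step you defer, namely fixing the dictionary between the lemma's indices and the statement's; it is not "immediate", and carrying it out shows that a faithful transcription does not reproduce item (iii) as printed. In the five-element configuration of case (iv) of the lemma, $\ubf$ contains the consecutive subword $x_{s''}x_{s'}y_{t'}$ with $s''<s'$ (in the corollary's indexing). The short chain $C''$ is labeled $x_{s''}$ and then $(x_{s'},y_{t'})$, while the long chain $C'$ is labeled $(x_{s'},y_{t'})$, then $(x_{s''},y_{t'})$, then $x_{s''}$: the letter deleted at the top of $C'$ is the one appearing in the \emph{middle} transposition label, not in the first. Hence the transcription yields $\lambda(\vbf',\vbf)=x_{s''}$, whereas (iii) as printed asserts $x_{s'}$; the printed version is even incompatible with part (i), since $\lambda(\ubf,\ubf'')=\lambda(\vbf',\vbf)=x_{s'}$ together with $\lambda(\ubf,\ubf')=(x_{s'},y_{t'})$ would force $x_{s'}$ to be followed in $\ubf$ both by a letter of $\xbf$ (for the right indel) and by $y_{t'}$ (for the transposition). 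So you should do the subscript bookkeeping explicitly: it completes your argument for (iii)--(iv) and shows that the final label in (iii) must read $x_{s''}$ (a slip in the stated corollary that your "it follows immediately" would otherwise silently endorse); with that correction, the labeling is also the one needed for \eqref{it:cu2} with respect to $\mathbf{S}_{m,n}$ later on.
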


Let us consider the partial order $\prec$ on $X\uplus Y\uplus(X\times Y)$ defined by $x_{s}\prec(x_{s},y_{t})$ for all $t\in[n]$, $y_{t}\prec(x_{s},y_{t})$ for all $s\in[m]$, $(x_{s},y_{t})\prec (x_{s'},y_{t})$ when $s>s'$ and $(x_{s},y_{t})\prec(x_{s},y_{t'})$ when $t>t''$, and then closed by reflexivity and transitivity.  Figure~\ref{fig:shard_order_43} shows the resulting poset $\mathbf{S}_{m,n}$ for $m=4$ and $n=3$.  

\begin{figure}
	\centering
	\includegraphics[page=7,scale=1]{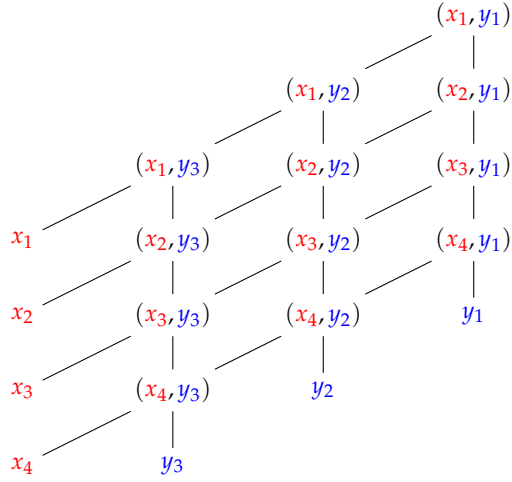}
	\caption{The poset $\mathbf{S}_{4,3}$.}
	\label{fig:shard_order_43}
\end{figure}

\begin{proposition}\label{prop:bubble_cu_labeling}
	The labeling $\lambda$ defined in \eqref{eq:bubble_labeling} is a CU-labeling of $\Bub(m,n)$ with respect to the poset $\mathbf{S}_{m,n}$.
\end{proposition}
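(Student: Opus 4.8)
The plan is to verify the five defining properties \ref{it:cu1}--\ref{it:cu5} of a CU-labeling one at a time, leaning on the polygonal-interval analysis already carried out. Properties \ref{it:cu1} and \ref{it:cu3} are literally parts (i) and (ii) of Corollary~\ref{cor:label_properties}, so nothing is left to do there. For \ref{it:cu2} I first observe that in a four-element polygonal interval $[\ubf,\vbf]$ the two maximal chains are $\ubf\lessdot\ubf'\lessdot\vbf$ and $\ubf\lessdot\ubf''\lessdot\vbf$, so there is no covering pair $(r,r')$ with $\ubf<r\lessdot r'<\ubf'\vee\ubf''=\vbf$; hence \ref{it:cu2} is vacuous in that case and only the five-element polygonal intervals need attention.

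So let $[\ubf,\vbf]$ be a five-element polygonal interval, with long chain $C'\colon\ubf\lessdot\ubf'\lessdot\vbf'\lessdot\vbf$ and short chain $C''\colon\ubf\lessdot\ubf''\lessdot\vbf$. The only covering pair $(r,r')$ with $\ubf<r\lessdot r'<\vbf$ is $(\ubf',\vbf')$, and by Corollary~\ref{cor:label_properties}(i),(iii),(iv) the labels $\lambda(\ubf,\ubf')$, $\lambda(\ubf',\vbf')$ and $\lambda(\ubf,\ubf'')=\lambda(\vbf',\vbf)$ are completely determined: in the situation of part (iii) they are $(x_{s'},y_{t'})$, $(x_{s''},y_{t'})$ and $x_{s'}$ with $s''<s'$, and in the situation of part (iv) they are $y_{t''}$, $(x_{s'},y_{t''})$ and $(x_{s'},y_{t'})$ with $t''<t'$. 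Using the covering relations of $\mathbf{S}_{m,n}$ (namely $x_{s}\prec(x_{s},y_{t})$, $y_{t}\prec(x_{s},y_{t})$, $(x_{s},y_{t})\prec(x_{s'},y_{t})$ for $s>s'$, $(x_{s},y_{t})\prec(x_{s},y_{t'})$ for $t>t'$, and transitivity) one checks directly that $\lambda(\ubf,\ubf')\prec\lambda(\ubf',\vbf')$ and $\lambda(\ubf,\ubf'')\prec\lambda(\ubf',\vbf')$ in both cases, which is exactly \ref{it:cu2}.

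For \ref{it:cu4} I would use the explicit description of $\JI\bigl(\Bub(m,n)\bigr)$ from Lemma~\ref{lem:bubble_irreducibles} together with Corollary~\ref{cor:bubble_irreducibles_poset}. A join-irreducible of the first kind is $\xbf_{\hat\imath}$; it covers the bottom element $\xbf$ by a right indel deleting $x_i$, so $\lambda(\xbf,\xbf_{\hat\imath})=x_i$. A join-irreducible of the second kind is obtained from $\xbf$ by inserting $y_j$ immediately before some $x_r$ (or at the end); by Corollary~\ref{cor:xy_removal} its unique lower cover removes the inversion $(x_r,y_j)$ (or the letter $y_j$, if it sits at the end), so the defining edge carries the label $(x_r,y_j)$ (or $y_j$). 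Letting $i$ range over $[m]$ for the first kind, and $j$ over $[n]$ and $r$ over $[m]$ for the second kind (plus the ``at the end'' case for each $j$), the defining edges of the $mn+m+n$ join-irreducibles realise each element of $X\uplus Y\uplus(X\times Y)$ exactly once; in particular the labels are pairwise distinct, which gives \ref{it:cu4}, and $\lambda$ is surjective onto $\mathbf{S}_{m,n}$.

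Property \ref{it:cu5} is handled dually. By Lemma~\ref{lem:bubble_duality} the meet-irreducibles of $\Bub(m,n)$ correspond to the join-irreducibles of $\Bub(n,m)$; unwinding this (or dualizing the degree count in Lemma~\ref{lem:bubble_hasse_regular}), a meet-irreducible $\vbf$ either satisfies $\vbf_{\xbf}=\{x_i\}$ and $\vbf_{\ybf}=\ybf$, or $\vbf_{\xbf}=\emptyset$ and $\vbf_{\ybf}=\ybf_{\hat\jmath}$. In the first case the unique upper cover is obtained by transposing $x_i$ with the $y$-letter immediately to its right, adding the inversion $(x_i,y_t)$, or, when $x_i$ is the last letter, by deleting it; in the second case it is $\ybf$, reached by a right indel inserting $y_j$. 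Thus the defining edges of the $mn+m+n$ meet-irreducibles carry the labels $(x_i,y_t)$, $x_i$, and $y_j$, again each exactly once, giving \ref{it:cu5}. I expect the bookkeeping in \ref{it:cu2} to be the main obstacle: one must be confident that the five-element polygonal intervals come in precisely the two shapes of Corollary~\ref{cor:label_properties}, invoke \ref{it:cu1} to pin down the labels along the short chain, and then carefully match the resulting label triples against the rather asymmetric covering relations of $\mathbf{S}_{m,n}$.
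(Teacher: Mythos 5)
Your proposal is correct and follows essentially the same route as the paper: \eqref{it:cu1} and \eqref{it:cu3} from Corollary~\ref{cor:label_properties}(i)--(ii), \eqref{it:cu2} by checking the label triples of the five-element polygonal intervals of Corollary~\ref{cor:label_properties}(iii)--(iv) against the order $\mathbf{S}_{m,n}$, \eqref{it:cu4} from Lemma~\ref{lem:bubble_irreducibles}, and \eqref{it:cu5} from that lemma together with the duality of Lemma~\ref{lem:bubble_duality}. You merely spell out the verifications (vacuousness of \eqref{it:cu2} on square intervals, the explicit labels on the defining edges of the irreducibles) that the paper leaves implicit, and these checks are accurate.
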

\begin{proof}
	\eqref{it:cu1} follows from Corollary~\ref{cor:label_properties}(i) and \eqref{it:cu3} follows from Corollary~\ref{cor:label_properties}(ii).  The definition of $\mathbf{S}_{m,n}$ together with Corollary~\ref{cor:label_properties}(iv)-(v) implies \eqref{it:cu2}.  Equation~\eqref{it:cu4} follows from Lemma~\ref{lem:bubble_irreducibles}, and \eqref{it:cu5} follows from the same lemma together with Lemma~\ref{lem:bubble_duality}.
\end{proof}

\begin{theorem}\label{thm:bubble_congruence_uniform}
	For $m,n\geq 0$, the lattice $\Bub(m,n)$ is constructable by interval doublings.
\end{theorem}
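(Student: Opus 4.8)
The plan is to read the statement off the CU-labeling framework assembled in this section. Since $\Bub(m,n)$ is a finite lattice (Theorem~\ref{thm:bubble_lattice}) and Proposition~\ref{prop:bubble_cu_labeling} exhibits the edge labeling $\lambda$ of \eqref{eq:bubble_labeling} as a CU-labeling valued in $\mathbf{S}_{m,n}$, the equivalence in Theorem~\ref{thm:congruence_uniform_labeling} between admitting a CU-labeling and being constructable by interval doublings applies directly. So the proof amounts to: invoke Proposition~\ref{prop:bubble_cu_labeling}, then invoke Theorem~\ref{thm:congruence_uniform_labeling}.

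The genuine work lies in Proposition~\ref{prop:bubble_cu_labeling}, and there the principal hurdle --- already dispatched --- is condition \eqref{it:cu2}, the monotonicity of labels along a polygonal chain: it forces one to pin down the order relations of the target poset $\mathbf{S}_{m,n}$ (the covers $x_{s}\prec(x_{s},y_{t})$ and $y_{t}\prec(x_{s},y_{t})$, and the two comparabilities among pairs sharing a coordinate) and to match them against the label changes recorded in Corollary~\ref{cor:label_properties}(iii)--(iv) as one moves up the longer side of a five-element polygon. Conditions \eqref{it:cu1} and \eqref{it:cu3} are the easy parts of Corollary~\ref{cor:label_properties}, while \eqref{it:cu4} and \eqref{it:cu5} reduce to the classification of join-irreducibles in Lemma~\ref{lem:bubble_irreducibles} (the meet side being transported through the duality of Lemma~\ref{lem:bubble_duality}). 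The boundary cases $m=0$ or $n=0$, where $\Bub(m,n)$ is Boolean, require no separate treatment, as the labeling and hence the conclusion apply verbatim.

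I do not anticipate any remaining obstacle for the present theorem itself. For the record, combining it with Theorem~\ref{thm:congruence_uniform_semidistributive} reproves semidistributivity of $\Bub(m,n)$, and then Theorem~\ref{thm:bubble_extremal} together with Theorem~\ref{thm:semidistributive_extremal_trim} yields trimness; this supplies the last of the properties asserted in Theorem~\ref{thm:bubble_lattice_main}.
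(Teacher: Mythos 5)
Your proposal matches the paper's proof: the theorem is obtained by combining Proposition~\ref{prop:bubble_cu_labeling} (the labeling $\lambda$ of \eqref{eq:bubble_labeling} is a CU-labeling) with Theorem~\ref{thm:congruence_uniform_labeling}, exactly as you describe. Your accounting of where the real work lies (the verification of \eqref{it:cu1}--\eqref{it:cu5} via Corollary~\ref{cor:label_properties}, Lemma~\ref{lem:bubble_irreducibles} and Lemma~\ref{lem:bubble_duality}) is also consistent with the paper.
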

\begin{proof}
	This follows from Theorem~\ref{thm:congruence_uniform_labeling} and Proposition~\ref{prop:bubble_cu_labeling}.
\end{proof}

\begin{corollary}\label{cor:bubble_semidistributive_trim}
	For $m,n\geq 0$, the lattice $\Bub(m,n)$ is semidistributive and trim.
\end{corollary}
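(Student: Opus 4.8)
The plan is to assemble this corollary entirely from results already in hand; no new combinatorics on shuffle words is needed. The statement has two assertions, semidistributivity and trimness, and each follows from a short deduction.

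First I would derive semidistributivity. By Theorem~\ref{thm:bubble_congruence_uniform}, the lattice $\Bub(m,n)$ is constructable by interval doublings. Theorem~\ref{thm:congruence_uniform_semidistributive} then immediately gives that $\Bub(m,n)$ is semidistributive (both join and meet semidistributive). Alternatively, one could invoke Proposition~\ref{prop:bubble_cu_labeling} together with Theorem~\ref{thm:congruence_uniform_labeling}, but routing through Theorem~\ref{thm:bubble_congruence_uniform} is cleanest since that theorem packages exactly this fact.

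Next I would derive trimness. We already know from Theorem~\ref{thm:bubble_extremal} that $\Bub(m,n)$ is extremal, and we have just established that it is semidistributive. Theorem~\ref{thm:semidistributive_extremal_trim} states that any semidistributive, extremal lattice is trim, so applying it finishes the argument. It is worth noting in the write-up that this is precisely the situation Theorem~\ref{thm:semidistributive_extremal_trim} is designed for: establishing left-modularity of a maximal chain directly in $\Bub(m,n)$ would be laborious, whereas extremality was already obtained by an explicit chain count in the proof of Theorem~\ref{thm:bubble_extremal}.

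There is essentially no obstacle here: every ingredient is a cited or previously proven result, and the only task is to cite the correct implications in the correct order. The one small point to be careful about is making sure the version of semidistributivity used (two-sided, i.e. both join and meet) matches the hypothesis of Theorem~\ref{thm:semidistributive_extremal_trim}; since Theorem~\ref{thm:congruence_uniform_semidistributive} delivers full semidistributivity, this is fine.
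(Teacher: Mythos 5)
Your proposal is correct and follows exactly the paper's own argument: semidistributivity via Theorem~\ref{thm:bubble_congruence_uniform} combined with Theorem~\ref{thm:congruence_uniform_semidistributive}, and trimness via Theorem~\ref{thm:bubble_extremal} together with Theorem~\ref{thm:semidistributive_extremal_trim}. Nothing is missing.
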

\begin{proof}
	The first claim follows from Theorems~\ref{thm:congruence_uniform_semidistributive} and \ref{thm:bubble_congruence_uniform}, and the second claim follows from the first and Theorems~\ref{thm:semidistributive_extremal_trim} and \ref{thm:bubble_extremal}.
\end{proof}

We conclude this section by assembling the proof of Theorem~\ref{thm:bubble_lattice_main}.

\begin{proof}[Proof of Theorem~\ref{thm:bubble_lattice_main}]
	The fact that $\Bub(m,n)$ is a lattice is Theorem~\ref{thm:bubble_lattice}.  The constructability by interval doublings is Theorem~\ref{thm:bubble_congruence_uniform} and the semidistributivity and trimness were shown in Corollary~\ref{cor:bubble_semidistributive_trim}.
\end{proof}

Figure~\ref{fig:bubble_22} shows $\Bub(2,2)$.  It is laid out in such a way that it is possible to reconstruct the doubling procedure.

\begin{figure}
	\centering
	\includegraphics[page=1,scale=1]{shuffle_figures.pdf}
	\caption{The bubble lattice $\Bub(2,2)$.}
	\label{fig:bubble_22}
\end{figure}

\subsection{Relation to Hochschild Lattices}
	\label{sec:hochschild}
One of the main motivations for the research presented here comes from an intriguing connection between certain shuffle lattices and the so-called \defn{Hochschild lattices} observed in \cite{muehle:hochschild}.  The Hochschild lattice $\Hoch(n)$ was initially defined as a certain interval in a certain partial order on Dyck paths~\cite{chapoton:dyck}, and it was later realized as the component order on certain integer tuples, called triwords~\cite{combe:geometric}.  More precisely, a \defn{triword} of length $n$ is an integer tuple $\mathfrak{u}=(u_{1},u_{2},\ldots,u_{n})$ with the following properties:
\begin{description}
	\item[T1\label{it:tri_1}] $u_{i}\in\{0,1,2\}$ for $i\in[n]$,
	\item[T2\label{it:tri_2}] $u_{1}\neq 2$,
	\item[T3\label{it:tri_3}] if $u_{i}=0$, then $u_{j}\neq 1$ for all $j>i$.
\end{description}
We write $\Tri(n)$ for the set of all triwords of length $n$ and $\compleq$ for the componentwise order on integer tuples.  The \defn{Hochschild lattice} is the poset $\Hoch(n)\defs\bigl(\Tri(n),\compleq\bigr)$.  This name is justified by the following result.

\begin{theorem}[\cite{combe:geometric}*{Theorem~2.3~and~Proposition~3.2}]\label{thm:hochschild_lattice}
	For $n>0$, the poset $\Hoch(n)$ is an extremal, interval-constructable lattice.
\end{theorem}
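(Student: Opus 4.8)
The plan is to identify $\Hoch(n)$ with a bubble lattice and then read off all three assertions from Section~\ref{sec:bubble_lattice}. Concretely, I would prove that $\Hoch(n)$ is isomorphic to $\Bub(n-1,1)$; since the latter is a lattice (Theorem~\ref{thm:bubble_lattice}), extremal (Theorem~\ref{thm:bubble_extremal}) and constructable by interval doublings (Theorem~\ref{thm:bubble_congruence_uniform}), all of which are isomorphism invariants, the statement follows at once --- and one obtains for free, via Corollary~\ref{cor:bubble_semidistributive_trim}, that $\Hoch(n)$ is also semidistributive and trim. A first plausibility check is the count of irreducibles: Corollary~\ref{cor:bubble_irreducibles_count} gives $\lvert\JI(\Bub(n-1,1))\rvert = 2n-1$, and the join-irreducibles of $\Tri(n)$ are readily seen to be the $n-1$ tuples with a single entry $2$ in a coordinate $\geq 2$ together with the $n$ tuples of the form $(1,\dots,1,0,\dots,0)$, again $2n-1$ of them.

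Set $m=n-1$. The isomorphism $\Phi\colon\Shuf(m,1)\to\Tri(n)$ I would use records, for a shuffle word $\mathbf u$, whether $y_1$ occurs and, for each letter of $\xbf$, whether it occurs and on which side of $y_1$; crucially the $\xbf$-coordinates are listed in reverse order:
\[
	\Phi(\mathbf u)_1 = \begin{cases}1 & y_1\in\mathbf u\\ 0 & y_1\notin\mathbf u\end{cases},
	\qquad
	\Phi(\mathbf u)_{i+1} = \begin{cases}2 & x_{m+1-i}\notin\mathbf u\\ 1 & x_{m+1-i}\in\mathbf u\text{ and lies right of }y_1\\ 0 & \text{otherwise}\end{cases}
	\quad(i\in[m]).
\]
The proof then splits into three steps. \textbf{Step 1:} $\Phi(\mathbf u)\in\Tri(n)$. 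Conditions \ref{it:tri_1} and \ref{it:tri_2} are immediate, and \ref{it:tri_3} holds because if $\Phi(\mathbf u)_{i+1}=0$ and $\Phi(\mathbf u)_{j+1}=1$ with $j>i$, then in $\mathbf u$ the lower-indexed letter $x_{m+1-j}$ precedes $x_{m+1-i}$ while $y_1$ precedes $x_{m+1-j}$ and follows $x_{m+1-i}$, which is impossible. \textbf{Step 2:} $\Phi$ is a bijection. One reconstructs $\mathbf u$ from a triword $\mathfrak u$ by putting $y_1\in\mathbf u$ iff $u_1=1$, $x_{m+1-i}\in\mathbf u$ iff $u_{i+1}\neq 2$, and inserting $y_1$ just after exactly those present letters of $\xbf$ whose coordinate equals $1$; condition \ref{it:tri_3} is precisely what makes those letters form a suffix of the list of present $\xbf$-letters, so the reconstruction is unambiguous. \textbf{Step 3:} both $\Phi$ and $\Phi^{-1}$ are order preserving. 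Here I would invoke Lemma~\ref{lem:bubble_order}: for $\mathbf u,\mathbf v\in\Shuf(m,1)$ it reads ``the $\xbf$-support of $\mathbf v$ is contained in that of $\mathbf u$'', ``$y_1\in\mathbf u$ implies $y_1\in\mathbf v$'', and $\invset(\mathbf u_{\mathbf v})\subseteq\invset(\mathbf v_{\mathbf u})$; since for words in $\Shuf(m,1)$ the inversions are exactly the pairs $(x,y_1)$ with $x$ right of $y_1$, the third clause says that every letter of $\xbf$ lying right of $y_1$ in $\mathbf u$ is either absent from $\mathbf v$ or lies right of $y_1$ in $\mathbf v$. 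Comparing with the displayed definition of $\Phi$, these three clauses amount to $\Phi(\mathbf u)\compleq\Phi(\mathbf v)$, so $\mathbf u\bubleq\mathbf v$ if and only if $\Phi(\mathbf u)\compleq\Phi(\mathbf v)$.

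I expect Step 3 to be the only genuine (and still mild) obstacle: it requires carefully matching the ``global'' description of $\bubleq$ from Lemma~\ref{lem:bubble_order}, phrased via restrictions and inversion sets, against the three coordinatewise inequalities defining $\compleq$ on $\Tri(n)$, and verifying the equivalence in both directions. Steps 1 and 2 are routine case analysis on the three possible values of a coordinate. Once the isomorphism $\Hoch(n)\cong\Bub(n-1,1)$ is in hand, the theorem --- together with the stronger conclusion that $\Hoch(n)$ is semidistributive and trim --- is immediate from the results of Section~\ref{sec:bubble_lattice}.
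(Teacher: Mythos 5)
Your proposal is correct, but it takes a different route from the paper: the paper does not prove this statement at all --- it is imported verbatim from Combe's work on triwords (\cite{combe:geometric}), where extremality and interval-constructability are established by a direct analysis of $\Hoch(n)$. What you do instead is derive the theorem internally from the bubble-lattice machinery, via an isomorphism $\Hoch(n)\cong\Bub(n-1,1)$ together with Theorems~\ref{thm:bubble_lattice}, \ref{thm:bubble_extremal} and \ref{thm:bubble_congruence_uniform}; this is legitimate and non-circular, since none of those results uses Theorem~\ref{thm:hochschild_lattice}. Your map $\Phi$ is in fact exactly the paper's map $\tilde\sigma$ from Proposition~\ref{prop:bubble_hochschild}, just described coordinatewise, so in effect you are anticipating that proposition --- but your verification is cleaner and more self-contained than the paper's: you prove bijectivity by exhibiting an explicit inverse (where the paper resorts to the enumeration $\lvert\Tri(n)\rvert=2^{n-2}(n+3)=\lvert\Shuf(n-1,1)\rvert$ quoted from \cite{combe:geometric} and \cite{greene:shuffle}), and you match the global order descriptions directly via Lemma~\ref{lem:bubble_order} instead of the paper's cover-by-cover comparison, which needs the external characterization of covering pairs in $\Hoch(n)$ from \cite{combe:geometric}*{Proposition~1.3}. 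I checked the three clauses of Lemma~\ref{lem:bubble_order} against the componentwise inequalities and your translation is accurate (note that your reading of the inversion clause implicitly uses clause (2) to guarantee $y_1\in\mathbf v$, which is fine since you compare conjunctions); condition \ref{it:tri_3} is indeed what makes the letters with coordinate $1$ a suffix of the present $\xbf$-letters, so the inverse is well defined. The payoff of your approach, beyond matching the cited statement, is that you also get semidistributivity and trimness of $\Hoch(n)$ for free from Corollary~\ref{cor:bubble_semidistributive_trim}; the cost is that it only works once the bubble-lattice results are available, whereas the cited proof is independent of them. The one small item to spell out in a final write-up is the routine check that $\Phi$ and your reconstruction map are mutually inverse.
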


In \cite{muehle:hochschild}*{Theorem~1.3}, the second author showed that $\ShufPoset(n-1,1)$ is isomorphic to a certain reordering of $\Hoch(n)$.  We close the circle by proving that our bubble lattice $\Bub(n-1,1)$ is isomorphic to $\Hoch(n)$.  

To that end, we convert a shuffle word $\ubf\in\Shuf(n-1,1)$ to an integer tuple $\tilde\sigma(\ubf)=(u_{1},u_{2},\ldots,u_{n})$ as follows: if $x_{s}\notin\ubf_{\xbf}$, then $u_{n+1-s}=2$.  If $y_{1}$ is immediately after $x_{s}$, then we set $u_{i}=1$ for all $i\in[n-s]$ with $u_{i}\neq 2$ (if $y_{1}$ is the first letter, then this is to be interpreted as $s=0$).  The remaining entries are set to $0$.  For instance, for $\ubf=\rd{x_2x_4x_5}\bl{y_1}\rd{x_8}\in\Shuf(8,1)$, we get $\tilde\sigma(\ubf)=(1,1,2,2,0,0,2,0,2)$.

\begin{proposition}\label{prop:bubble_hochschild}
	For $n>0$, the map $\tilde\sigma$ is an isomorphism from $\Bub(n-1,1)$ to $\Hoch(n)$.
\end{proposition}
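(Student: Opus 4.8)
The plan is to verify directly that $\tilde\sigma$ is a bijection which, together with its inverse, preserves the order. The essential preliminary step is to reformulate $\tilde\sigma$ coordinate by coordinate. Since the $\xbf$-letters of any shuffle word appear in increasing order of index, unwinding the definition shows that for $\ubf\in\Shuf(n-1,1)$ the tuple $\tilde\sigma(\ubf)=(u_1,\ldots,u_n)$ is described as follows: $u_1=1$ if $y_1\in\ubf$, and $u_1=0$ otherwise; and for $j\geq 2$, setting $r\defs n+1-j\in[n-1]$, one has $u_j=2$ if $x_r\notin\ubf$, $u_j=1$ if $x_r\in\ubf$ and $x_r$ occurs after $y_1$ in $\ubf$, and $u_j=0$ otherwise. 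Granting this, conditions \ref{it:tri_1} and \ref{it:tri_2} are immediate; for \ref{it:tri_3} I would argue that if $u_j=1$ with $r=n+1-j$, then $y_1\in\ubf$ and $x_r$ follows $y_1$, so for every $i<j$ the index $r'\defs n+1-i$ exceeds $r$ and hence $x_{r'}$, if present, also follows $y_1$, forcing $u_i\in\{1,2\}$; together with $u_1=1$ this shows no $0$-entry precedes a $1$-entry, so $\tilde\sigma(\ubf)\in\Tri(n)$.

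Next I would exhibit the inverse. Given $\mathfrak u=(u_1,\ldots,u_n)\in\Tri(n)$, let $j_0\defs\min\{j:u_j=0\}$ (undefined if $\mathfrak u$ has no $0$), and define $\rho(\mathfrak u)$ to be the shuffle word whose $\xbf$-support is $\{x_r:r\in[n-1],\ u_{n+1-r}\neq 2\}$, which contains $y_1$ precisely when $u_1=1$, and in which $y_1$ (when present) immediately follows $x_{n+1-j_0}$, or is the first letter if $j_0$ is undefined. Here \ref{it:tri_3} ensures that the $1$-entries of $\mathfrak u$ are exactly $\{i<j_0:u_i\neq 2\}$ and the $0$-entries are exactly $\{i\geq j_0:u_i\neq 2\}$, and \ref{it:tri_2} ensures $j_0\geq 2$ whenever $u_1=1$, so that $x_{n+1-j_0}$ is genuinely a letter of $\rho(\mathfrak u)$; with these in hand, the identities $\rho\circ\tilde\sigma=\mathrm{id}$ and $\tilde\sigma\circ\rho=\mathrm{id}$ follow by a routine comparison of supports and of the location of $y_1$. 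Thus $\tilde\sigma$ is a bijection.

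It then remains to show that $\ubf\bubleq\vbf$ if and only if $\tilde\sigma(\ubf)\compleq\tilde\sigma(\vbf)$. Specializing Lemma~\ref{lem:bubble_order} to the one-letter alphabet $Y=\{y_1\}$, the relation $\ubf\bubleq\vbf$ holds exactly when (i) every $\xbf$-letter of $\vbf$ lies in $\ubf$, (ii) $y_1\in\ubf$ implies $y_1\in\vbf$, and (iii) whenever $y_1$ lies in both $\ubf$ and $\vbf$, every $\xbf$-letter common to $\ubf$ and $\vbf$ that follows $y_1$ in $\ubf$ also follows $y_1$ in $\vbf$. Comparing $\tilde\sigma(\ubf)=(u_j)$ with $\tilde\sigma(\vbf)=(v_j)$ coordinatewise via the description of the first paragraph: $u_1\leq v_1$ is precisely (ii); for $j\geq 2$ with $r=n+1-j$, the inequality $u_j\leq v_j$ is automatic when $u_j=0$, the case $u_j=2$ forces $v_j=2$, that is, $x_r\notin\ubf$ implies $x_r\notin\vbf$, which ranging over all $j$ is (i), and the case $u_j=1$ forces $v_j\geq 1$, that is, an $\xbf$-letter following $y_1$ in $\ubf$ is absent from $\vbf$ or follows $y_1$ in $\vbf$, which --- using (ii) to supply $y_1\in\vbf$ in the only case where it matters --- ranging over all $j$ is (iii). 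Hence $\tilde\sigma(\ubf)\compleq\tilde\sigma(\vbf)$ holds iff (i), (ii) and (iii) hold, iff $\ubf\bubleq\vbf$. Since $\tilde\sigma$ is then an order-preserving and order-reflecting bijection, that is, an isomorphism of posets, we conclude $\Bub(n-1,1)\cong\Hoch(n)$.

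The one step carrying real content is the coordinatewise reformulation of $\tilde\sigma$ in the first paragraph; I expect this, and keeping the edge cases ($y_1$ absent, $y_1$ first, $\mathfrak u$ with no $0$) straight in the inverse and in the order comparison, to be the only delicate points, with everything else being bookkeeping.
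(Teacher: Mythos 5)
Your proof is correct, but it takes a genuinely different route from the paper's. Your coordinatewise reformulation of $\tilde\sigma$ ($u_1$ records the presence of $y_1$; for $j\geq 2$ and $r=n+1-j$, $u_j$ records whether $x_r$ is absent, follows $y_1$, or precedes it) is accurate, and from it you obtain bijectivity by writing down an explicit inverse $\rho$, whereas the paper proves injectivity and then invokes the enumerations $\lvert\Tri(n)\rvert=2^{n-2}(n+3)=\lvert\Shuf(n-1,1)\rvert$ from the cited literature; your argument is self-contained where the paper's leans on two external counting results. For the order part, the paper matches \emph{cover} relations, combining Lemma~\ref{lem:bubble_covers} with the cited description of covers in $\Hoch(n)$, which costs a lengthy case analysis; you instead compare the \emph{full} order relations, specializing Lemma~\ref{lem:bubble_order} to $Y=\{y_1\}$ and checking the three conditions against the coordinatewise inequalities $u_j\leq v_j$, which is shorter, avoids the citation on Hochschild covers, and directly yields that $\tilde\sigma$ is order-preserving and order-reflecting, hence an isomorphism. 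Two cosmetic points: the fact that $j_0\geq 2$ when $u_1=1$ is simply $u_1\neq 0$ (condition \ref{it:tri_2} is not what is used there), and the coordinatewise description of $\tilde\sigma$, which you rightly flag as the step carrying the content, deserves the two-line verification that the letters of $\ubf$ following $y_1$ are exactly the present $x_r$ with $r>s$ when $y_1$ immediately follows $x_s$ (the $\xbf$-letters of a shuffle word appear in increasing index order); with that spelled out, everything else checks.
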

\begin{proof}
	We first need to show that $\tilde\sigma$ is in fact a bijection from $\Shuf(n-1,1)$ to $\Tri(n)$.  Let $\ubf\in\Shuf(n-1,1)$ and let $\tilde\sigma(\ubf)=(u_{1},u_{2},\ldots,u_{n})$.  By construction, $u_{i}\in\{0,1,2\}$ which establishes \eqref{it:tri_1}.  If $u_{1}=2$, then it must be that $x_{n}\notin\ubf$, but $x_{n}$ is not in the alphabet used for constructing $\Shuf(n-1,1)$, so \eqref{it:tri_2} is satisfied.  If $y_{1}$ is not in the support of $\ubf$, then $u_{i}\in\{0,2\}$ for all $i\in[n]$.  Otherwise it appears directly after $x_{s}$, and we get that $u_{i}\in\{1,2\}$ for $i\leq n-s$ and $u_{i}\in\{0,2\}$ for $i>n-s$, which establishes \eqref{it:tri_3}.  Moreover, since 
	$\ubf$ is uniquely determined by its restriction to $\ubf_{\xbf}$ and by the position of the letter $y_{1}$ it is clear that the map $\tilde\sigma$ is injective.  It follows from \cite{combe:geometric}*{Eq.~(1.9)} and \cite{greene:shuffle}*{Theorem~3.4} that $\bigl\lvert\Tri(n)\bigr\rvert=2^{n-2}(n+3)=\bigl\lvert\Shuf(n-1,1)\bigr\rvert$, which implies that $\tilde\sigma$ is a bijection.
	
	\medskip
	
	It remains to show that $\ubf\bubcov\vbf$ if and only if $\tilde\sigma(\ubf)\compcov\tilde\sigma(\vbf)$ for all $\ubf,\vbf\in\Shuf(n-1,1)$.  Let $\ubf,\vbf\in\Shuf(n-1,1)$ and let $\tilde\sigma(\ubf)=(u_{1},u_{2},\ldots,u_{n})$, $\tilde\sigma(\vbf)=(v_{1},v_{2},\ldots,v_{n})$.  
	
	\medskip
	
	Suppose that $\ubf\bubcov\vbf$.  By Lemma~\ref{lem:bubble_covers}, we either have $\ubf\transpose\vbf$ or $\ubf\sindel\vbf$.  \\
	(i) If $\ubf\transpose\vbf$, then we may assume that $x_{s}x_{t}y_{1}$ is a consecutive subword of $\ubf$ and $x_{s}y_{1}x_{t}$ is a consecutive subword of $\vbf$.  In particular, $s<t$.  By construction, $\bigl\{i\in[n]\mid u_{i}=2\bigr\}=\bigl\{i\in[n]\mid v_{i}=2\bigr\}$, $u_{n+1-s}=0$ and $u_{i}\neq 0$ for all $i<n+1-s$.  Since $s<t$ it follows that $n+1-t<n+1-s$ which implies that $v_{n+1-s}=1$.  Now, choose $j\in[n]$ such that $u_{j}\neq 2$.  If $j\leq n-s$, then $u_{j}=1=v_{j}$, because $u_{i}=2=v_{i}$ for all $i\in\{n+2-t,\ldots,n-s\}$.  If $j>n+1-s$, then $u_{j}=0=v_{j}$ by construction.  It follows that $\tilde\sigma(\ubf)\compcov\tilde\sigma(\vbf)$.
	
	(ii) So assume that $\ubf\sindel\vbf$.  If $\vbf$ is obtained from $\ubf$ by adding $y_{1}$, then $\vbf=\ubf y_{1}$, which means that $u_{i}=v_{i}$ for all $i>1$ and $u_{1}=0$, $v_{1}=1$; thus $\tilde\sigma(\ubf)\compcov\tilde\sigma(\vbf)$.  If $\vbf$ is obtained from $\ubf$ by deleting the letter $x_{s}$, then either $\ubf$ does not contain $y_{1}$ or $\ubf$ has a consecutive subword $x_{s}y_{1}$.  In both cases, $u_{i}=v_{i}$ for $i\neq n+1-s$ and $u_{n+1-s}=0$ and $v_{n+1-s}=2$.  Since $u_{1}=0=v_{1}$, \eqref{it:tri_3} implies $\tilde\sigma(\ubf)\compcov\tilde\sigma(\vbf)$.  
	
	\medskip
	
	Conversely, suppose that $\tilde\sigma(\ubf)\compcov\tilde\sigma(\vbf)$.  By \cite{combe:geometric}*{Proposition~1.3}, there exists a unique index $i\in[n]$ such that $u_{i}<v_{i}$.\\
	(i) If $u_{i}=0$ and $v_{i}=1$, then $u_{j}\neq 0$ for all $j>i$ by \eqref{it:tri_3}, and consequently $v_{j}\neq 0$ for all $j>i$.  Moreover, $\bigl\{j\in[n]\mid u_{j}=2\bigr\}=\bigl\{j\in[n]\mid v_{j}=2\bigr\}$, and therefore $\ubf_{\xbf}=\vbf_{\xbf}$.  If $i=1$, then $\ubf$ does not contain the letter $y_{1}$, but $\vbf$ does.  However, since $v_{1}$ is the only entry equal to $1$, we conclude that $\vbf=\ubf y_{1}$, which means that $\ubf\sindel\vbf$ and thus $\ubf\bubcov\vbf$.  If $i>1$, then \eqref{it:tri_3} implies that $v_{1}=1$ and therefore $u_{1}=1$, meaning that $y_{1}$ is a letter of both $\ubf$ and $\vbf$.  Moreover, $x_{n+1-i}$ is also a letter of both $\ubf$ and $\vbf$.  Since $u_{i}$ is the first entry equal to $0$, it must be that $y_{1}$ comes directly after $x_{n+1-i}$ in $\ubf$.  If $y_{1}$ comes after $x_{n+1-i}$ in $\vbf$, say immediately after the letter $x_{n+1-s}$ for $s\leq i$, then by construction we would have $v_{s}=0$, and thus $v_{i}=0$ by \eqref{it:tri_3}.  It follows that $y_{1}$ comes before $x_{n+1-i}$ in $\vbf$.  Let $j$ be the smallest index $>i$ such that $u_{j}=0$ (if it exists).  Then $v_{j}=0$, and it follows that $\ubf$ has a consecutive subword $x_{n+1-j}x_{n+1-i}y_{1}$. If $y_{1}$ would come before $x_{n+1-j}$ in $\vbf$, say immediately after some letter $x_{n+1-s}$ for $j<s$, then $v_{s-1}=1$.  But now, \eqref{it:tri_3} forces $v_{j}=1$, a contradiction.  Thus, $x_{n+1-j}y_{1}x_{n+1-i}$ is a consecutive subword of $\vbf$, and we get $\ubf\transpose\vbf$.  If such a $j$ does not exist, then $\ubf$ starts with $x_{n+1-i}y_{1}$, while $\vbf$ starts with $y_{1}x_{n+1-i}$, and we still have $\ubf\transpose\vbf$.  Either way, we conclude $\ubf\bubcov\vbf$.
% 	\henri{This seems unnecessarily complicated.}.
	
	(ii) If $u_{i}\neq 2$ and $v_{i}=2$, then $i>1$.  In particular, $x_{n+1-i}\notin\vbf$, but $x_{n+1-i}\in\ubf$.  If $u_{i}=1$, then $y_{1}$ appears in $\ubf$ in a position $x_{n-j+1}$ for $j\geq i$.  Thus, we remove $x_{n+1-i}$ from $\ubf$ from a position after $y_{1}$, which implies that this operation is a right indel and thus $\ubf\bubcov\vbf$.  If $u_{i}=0$, then we conclude that there exists some $j<i$ such that $u_{j}=0$.  Otherwise, the word $(v'_{1},v'_{2},\ldots,v'_{n})$ with $v'_{j}=u_{j}$ for all $j\neq i$ and $v'_{i}=u_{i+1}$ would be a triword, contradicting the assumption that $\tilde\sigma(\ubf)\compcov\tilde\sigma(\vbf)$.  It is either the case that $u_{j}\neq 1$ for all $j$ (and thus $v_{j}\neq 1$ for all $j$) which implies that $y_{1}\notin\ubf$ and $y_{1}\notin\vbf$.  But then, deleting $x_{n+1-i}$ from $\ubf$ clearly is a right indel, and we get $\ubf\bubcov\vbf$.  Otherwise, we choose the largest index $j<i$ with $u_{j}=0$ (whose existence we have argued above).  By construction $y_{1}$ comes right after $x_{n+1-j}$ which comes after $x_{n+1-i}$ in $\ubf$.  Once again, removing $x_{n+1-i}$ from $\ubf$ is a right indel, and we get $\ubf\bubcov\vbf$.

	The proof is thus complete.
\end{proof}

Figure~\ref{fig:hochschild_map_3} shows the action of the map $\tilde\sigma$ for $n=3$, and Figure~\ref{fig:hochschild_3} shows the lattice $\Hoch(3)$.  

\begin{figure}
	\centering
	\begin{subfigure}[b]{.45\textwidth}
		\centering
		\begin{tabular}{r|l}
			$\ubf\in\Shuf(2,1)$ & $\tilde\sigma(\ubf)$\\
			\hline\hline
			$x_{1}x_{2}$ & $(0,0,0)$\\
			$x_{1}$ & $(0,2,0)$ \\
			$x_{2}$ & $(0,0,2)$\\
			$\epsilon$ & $(0,2,2)$\\
			$y_{1}x_{1}$ & $(1,2,1)$\\
			$x_{1}y_{1}$ & $(1,2,0)$\\
			$y_{1}x_{2}$ & $(1,1,2)$\\
			$x_{2}y_{1}$ & $(1,0,2)$\\
			$x_{1}x_{2}y_{1}$ & $(1,0,0)$\\
			$x_{1}y_{1}x_{2}$ & $(1,1,0)$\\
			$y_{1}x_{1}x_{2}$ & $(1,1,1)$\\
		\end{tabular}
		\caption{The action of $\tilde\sigma$ for $n=3$.}
		\label{fig:hochschild_map_3}
	\end{subfigure}
	\hspace*{1cm}
	\begin{subfigure}[b]{.45\textwidth}
		\centering
		\includegraphics[page=8,scale=1]{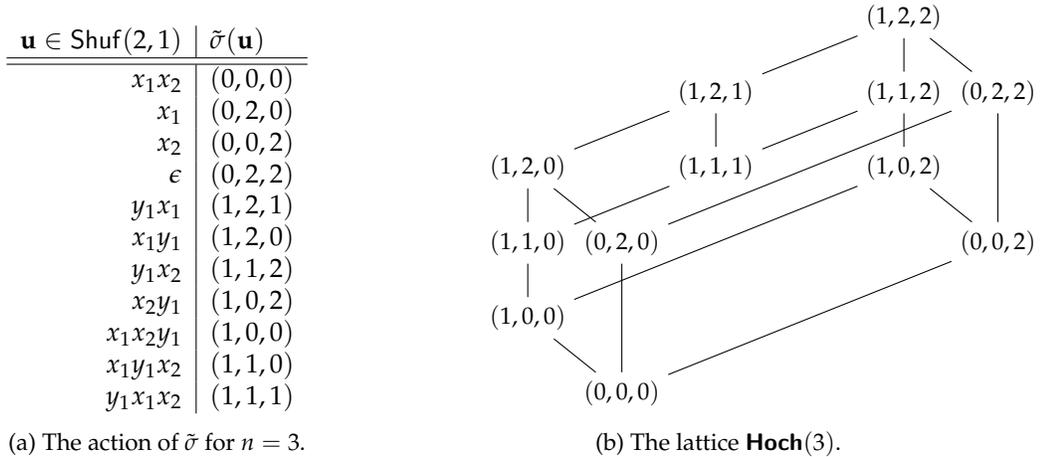}
		\caption{The lattice $\Hoch(3)$.}
		\label{fig:hochschild_3}
	\end{subfigure}
	\caption{Illustration of Proposition~\ref{prop:bubble_hochschild}.}
	\label{fig:hochschild}
\end{figure}

\begin{remark}
	The map $\tilde\sigma$ is slightly different from the map $\sigma\colon\Tri(n)\to\Shuf(n-1,1)$ used in \cite{muehle:hochschild}*{Section~5.3}.  For $\ubf\in\Shuf(n-1,1)$, we get $\sigma^{-1}(\ubf)$ by reversing $\tilde\sigma(\ubf)$ and reversing the letters of $\xbf$ once again.
\end{remark}

\begin{question}\label{qu:vector_realization}
	Proposition~\ref{prop:bubble_hochschild} explains how to realize $\Bub(n-1,1)$ as the componentwise order on certain integer tuples.  Can we extend this construction to $\Bub(m,n)$ for arbitrary $m,n\geq 0$, \ie can we find a set $X_{m,n}\subseteq\mathbb{Z}^{d}$ of integer tuples such that $\Bub(m,n)\cong(X_{m,n},\compleq)$?  What is the minimum value for $d$?
\end{question}

In fact, an affirmative answer to Question~\ref{qu:vector_realization} would have consequences for computing the order dimension of $\Bub(m,n)$.  Recall that the \defn{order dimension} of a finite poset ${\Poset=(P,\leq)}$ is the least number $d$ such that the order relation $\leq$ arises as the intersection of $d$ linear extensions of $\leq$.  Equivalently, the order dimension of $\Poset$ is the least number $d$ such that $\Poset$ can be embedded into a direct product of $d$ chains~\cite{hiraguti:dimension}*{Theorem~9.6}.  Thus, if we have $\Poset\cong(Q,\compleq)$ for some $Q\subseteq\mathbb{Z}^{d}$, then $\dim\Poset\leq d$.

The \defn{$k$-crown} is the poset $\mathbf{C}_{k}$ on the ground set $\{p_{1},p_{2},\ldots,p_{k},q_{1},q_{2},\ldots,q_{k}\}$ determined by the covering pairs $p_{i}\lessdot q_{j}$ if and only if $i\neq j$.  It is well known that $\dim\mathbf{C}_{k}=k$~\cite{hiraguti:dimension}*{Theorem~7.3}.  If $\Poset$ is a lattice, then the elements covering the bottom element are called \defn{atoms}.

\begin{lemma}\label{lem:semidistributive_odim}
    A finite semidistributive lattice with $k$ atoms has order dimension at least $k$.
\end{lemma}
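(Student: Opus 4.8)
The plan is to exhibit the $k$-crown $\mathbf{C}_{k}$ as an induced subposet of $\Lattice$ and then combine two facts: the cited equality $\dim\mathbf{C}_{k}=k$, and the elementary monotonicity of order dimension under induced subposets (an order-embedding of $\Lattice$ into a direct product of $d$ chains restricts to an order-embedding of any induced subposet, so the dimension cannot increase). The cases $k\leq 2$ are immediate: for $k=2$ the two distinct atoms form a two-element antichain, so $\Lattice$ is not a chain and $\dim\Lattice\geq 2$, while $k\in\{0,1\}$ gives a trivial bound. From now on assume $k\geq 3$, write $a_{1},\dots,a_{k}$ for the atoms and $\hat{0}$ for the bottom element.

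The heart of the argument is to attach to each atom the right meet-irreducible. For $i\in[k]$ set $T_{i}\defs\{x\in L\mid a_{i}\not\leq x\}$; this set is nonempty (it contains $\hat{0}$) and, $L$ being finite, it has maximal elements. I claim that meet-semidistributivity forces $T_{i}$ to have a greatest element $m_{i}$; since $L$ is finite it suffices to show $T_{i}$ has a unique maximal element. If $x,y$ are maximal in $T_{i}$, then $a_{i}\wedge x=\hat{0}=a_{i}\wedge y$ because $a_{i}$ is an atom, so meet-semidistributivity gives $a_{i}\wedge(x\vee y)=\hat{0}$; hence $x\vee y\in T_{i}$, and maximality of $x$ and of $y$ forces $x=x\vee y=y$. (The same reasoning shows $m_{i}$ is meet-irreducible, though this is not needed.) This is precisely the step where the hypothesis is used, and I expect it to be the only genuinely nontrivial point: without semidistributivity, $T_{i}$ may have several incomparable maximal elements and the choice of the ``correct'' meet-irreducible above almost all the other atoms breaks down.

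It then remains to check that the $2k$ elements $a_{1},\dots,a_{k},m_{1},\dots,m_{k}$ are pairwise distinct and that their induced order is that of $\mathbf{C}_{k}$. Since $a_{i}\not\leq a_{j}$ for $i\neq j$, each such $a_{j}$ lies in $T_{i}$, so $a_{j}\leq m_{i}$; on the other hand $a_{i}\not\leq m_{i}$ by construction. If some $m_{i}$ were an atom $a_{\ell}$, then $a_{j}\leq a_{\ell}$ for all $j\neq i$ would identify the $k-1\geq 2$ distinct atoms $a_{j}$ with $a_{\ell}$, which is absurd; and $m_{i}\neq\hat{0}$ since $k\geq 2$. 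Hence $m_{i}$ is neither $\hat{0}$ nor an atom, so $m_{i}\not\leq a_{j}$ for every $j$, and in particular $m_{i}\neq a_{j}$. Finally, $m_{i}\leq m_{j}$ with $i\neq j$ would give $a_{j}\leq m_{i}\leq m_{j}$, contradicting $a_{j}\not\leq m_{j}$, so the $m_{i}$ are pairwise incomparable (hence distinct). Collecting these facts, the induced subposet on $\{a_{1},\dots,a_{k},m_{1},\dots,m_{k}\}$ has exactly the order relations $a_{j}<m_{i}$ for $j\neq i$ (all of which are covers in the subposet), so it is isomorphic to $\mathbf{C}_{k}$. Therefore $\dim\Lattice\geq\dim\mathbf{C}_{k}=k$, as desired.
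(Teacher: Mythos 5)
Your proof is correct and follows essentially the same route as the paper: both arguments exhibit a $k$-crown inside the lattice on the atoms $a_{1},\dots,a_{k}$ together with the greatest elements $m_{i}$ of $\{x\in L\mid a_{i}\not\leq x\}$, and then combine $\dim\mathbf{C}_{k}=k$ with monotonicity of order dimension under subposets. The only difference is cosmetic: where the paper cites the map $a\mapsto\kappa(a)$ from the literature on semidistributive lattices, you derive the existence of the greatest element of $T_{i}$ directly from meet-semidistributivity (valid because $a_{i}$ is an atom, so $a_{i}\wedge x=\hat{0}$ for $x\in T_{i}$), and you treat the degenerate cases $k\leq 2$ explicitly.
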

\begin{proof}
    Let $\Lattice=(L,\leq)$ be semidistributive and let $A$ denote its set of atoms.  Since $A\subseteq\JI(\Lattice)$, \cite{freese:free}*{Theorem~2.56} implies that for every $a\in A$, the set
    \begin{displaymath}
        K(a) \defs \bigl\{p\in L\mid a\not\leq p\bigr\}
    \end{displaymath}
    has a greatest element, denoted by $\kappa(a)$.  Moreover, \cite{freese:free}*{Corollary~2.55} asserts that the assignment $a\mapsto\kappa(a)$ is injective.  So, if $a'\in A\setminus\{a\}$, then we necessarily have $a'\in K(a)$, because the atoms of $\Lattice$ form an antichain.  It follows that $a'<\kappa(a)$ for all $a,a'\in A$ with $a\neq a'$.  We also conclude that $\bigl\{\kappa(a)\mid a\in A\bigr\}$ is an antichain, because otherwise there would exist distinct atoms $a,a'\in A$ with $\kappa(a)\leq\kappa(a')$.  But then, $a'<\kappa(a)\leq\kappa(a')$, which contradicts $\kappa(a')\in K(a')$.  Therefore, the set
    \begin{displaymath}
        \Bigl(A\cup\bigl\{\kappa(a)\mid a\in A\bigr\},\leq\Bigr)
    \end{displaymath}
    exhibits a $k$-crown as a subposet of $\Lattice$.  By definition, the dimension of a subposet of $\Lattice$ cannot exceed the dimension of $\Lattice$.
\end{proof}

\begin{corollary}\label{cor:bubble_odim}
    For $m,n\geq 0$, we have $\dim\Bub(m,n)\geq m+n$.
\end{corollary}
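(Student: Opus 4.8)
The plan is to combine the semidistributivity of $\Bub(m,n)$ with a count of its atoms and then invoke Lemma~\ref{lem:semidistributive_odim}. First I would locate the bottom element: by Corollary~\ref{cor:bubble_bounded} the least element of $\Bub(m,n)$ is $\xbf$, so the atoms of $\Bub(m,n)$ are precisely the upper covers of $\xbf$.

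Next I would count those upper covers. Since $\xbf$ is the bottom element, it has no lower covers, so all of its incident Hasse edges point upward; by Lemma~\ref{lem:bubble_hasse_regular}, $\Bub(m,n)$ is Hasse-regular of degree $m+n$, hence $\xbf$ has exactly $m+n$ upper covers. Concretely, I expect these to be the words $\xbf_{\hat{\imath}}$ for $i\in[m]$, each obtained from $\xbf$ by a right indel deleting $x_i$ (legal since $x_i$ is followed either by $x_{i+1}\in X$ or, for $i=m$, by nothing), together with the words $\xbf y_j$ for $j\in[n]$, each obtained from $\xbf$ by a right indel inserting $y_j$ at the end; a short check via Lemma~\ref{lem:bubble_covers} confirms these $m+n$ words are pairwise distinct covers of $\xbf$. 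So $\Bub(m,n)$ has exactly $m+n$ atoms.

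Finally I would apply the machinery: by Corollary~\ref{cor:bubble_semidistributive_trim}, $\Bub(m,n)$ is semidistributive, so Lemma~\ref{lem:semidistributive_odim} applies with $k=m+n$ and yields $\dim\Bub(m,n)\geq m+n$, as claimed. (Unwinding the proof of Lemma~\ref{lem:semidistributive_odim}, this amounts to exhibiting the $(m+n)$-crown formed by the atoms $a$ and the elements $\kappa(a)$, but there is no need to make that explicit here.)

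There is essentially no obstacle in this argument: the one point needing a sentence is the atom count, and even that is immediate from Hasse-regularity once one observes that the bottom element has no lower covers, so the whole proof is a direct assembly of already-established facts.
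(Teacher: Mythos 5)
Your argument is correct and follows the paper's proof in essentially the same way: count that $\Bub(m,n)$ has exactly $m+n$ atoms and then apply Lemma~\ref{lem:semidistributive_odim} via the semidistributivity from Corollary~\ref{cor:bubble_semidistributive_trim}. The only cosmetic difference is that you obtain the atom count from Hasse-regularity (Lemma~\ref{lem:bubble_hasse_regular}) at the bottom element, whereas the paper reads it off from the description of the join-irreducibles in Lemma~\ref{lem:bubble_irreducibles}; both identify the same atoms $\xbf_{\hat{\imath}}$ and $\xbf y_{j}$.
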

\begin{proof}
    Lemma~\ref{lem:bubble_irreducibles} implies that $\Bub(m,n)$ has exactly $m+n$ atoms, namely $\xbf_{\hat{\imath}}$ for $i\in[m]$ and $\xbf y_{j}$ for $j\in[n]$.  The claim follows from Lemma~\ref{lem:semidistributive_odim}.
\end{proof}

We conjecture that the bound given in Corollary~\ref{cor:bubble_odim} is sharp.  This can, for instance, be proven by giving an affirmative answer to Question~\ref{qu:vector_realization} by realizing $\Bub(m,n)$ as the componentwise order on certain integer tuples of length $m+n$. For $n=1$, this was achieved in Proposition~\ref{prop:bubble_hochschild}.

\begin{conjecture}
    For $m,n\geq 0$, we have $\dim\Bub(m,n)=m+n$.
\end{conjecture}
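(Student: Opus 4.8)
The plan is to establish the reverse inequality $\dim\Bub(m,n)\le m+n$; together with Corollary~\ref{cor:bubble_odim} this yields the conjecture. By the characterization of order dimension via embeddings into products of chains (\cite{hiraguti:dimension}*{Theorem~9.6}), it suffices to construct an order embedding of $\Bub(m,n)$ into a product of $m+n$ chains — that is, to answer Question~\ref{qu:vector_realization} affirmatively with $d=m+n$.

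First I would write down the embedding, using one coordinate per letter. For $\ubf\in\Shuf(m,n)$ set
\[
	a_{s}(\ubf) \defs \begin{cases}
		n+1, & \text{if } x_{s}\notin\ubf,\\
		\max\bigl\{t\in[n]\mid y_{t}\in\ubf \text{ and } y_{t}\text{ precedes }x_{s}\text{ in }\ubf\bigr\}, & \text{if } x_{s}\in\ubf,
	\end{cases}
\]
with the convention $\max\emptyset=0$, and $b_{t}(\ubf)\defs 1$ if $y_{t}\in\ubf$ and $b_{t}(\ubf)\defs 0$ otherwise. Then $\phi(\ubf)\defs\bigl(a_{1}(\ubf),\ldots,a_{m}(\ubf),b_{1}(\ubf),\ldots,b_{n}(\ubf)\bigr)$ lands in a product of $m$ chains on $\{0,1,\ldots,n+1\}$ and $n$ chains on $\{0,1\}$, i.e.\ a product of $m+n$ chains. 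The core of the argument is the claim that $\ubf\bubleq\vbf$ if and only if $\phi(\ubf)\le\phi(\vbf)$ coordinatewise, which I would prove from Lemma~\ref{lem:bubble_order}. For the forward implication: $b_{t}$ is monotone since $\ubf_{\ybf}$ is a subword of $\vbf_{\ybf}$; and $a_{s}$ is monotone because the cases $x_{s}\notin\ubf$ and $x_{s}\in\ubf\setminus\vbf$ are immediate from the subword condition $\vbf_{\xbf}\subseteq\ubf_{\xbf}$, while if $x_{s}$ lies in both words and $a_{s}(\ubf)=t^{*}>0$, then $(x_{s},y_{t^{*}})$ is an inversion of $\ubf$ between two common letters, so $\invset(\ubf_{\vbf})\subseteq\invset(\vbf_{\ubf})$ forces it to be an inversion of $\vbf$, giving $a_{s}(\vbf)\ge t^{*}$. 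For the reverse implication one reads the three conditions of Lemma~\ref{lem:bubble_order} off $\phi$: the $a_{s}$-coordinates detect presence of $x_{s}$ (value $\le n$), so $\vbf_{\xbf}\subseteq\ubf_{\xbf}$; the $b_{t}$-coordinates give $\ubf_{\ybf}\subseteq\vbf_{\ybf}$; and finally, if $y_{t}$ precedes $x_{s}$ in $\ubf$ with both present in $\vbf$, then $t\le a_{s}(\ubf)\le a_{s}(\vbf)$, and since the present $y$'s preceding $x_{s}$ in $\vbf$ are exactly those of index $\le a_{s}(\vbf)$ (they form a prefix of the sorted present-$y$ sequence), $y_{t}$ precedes $x_{s}$ in $\vbf$ as well, establishing $\invset(\ubf_{\vbf})\subseteq\invset(\vbf_{\ubf})$.

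The point requiring care — and, I expect, the only real subtlety — is the choice to record the \emph{largest} index of a $y$ preceding $x_{s}$ rather than, say, the number of such $y$'s. Only the maximal index pins down a single inversion $(x_{s},y_{t^{*}})$ that the bubble order must preserve on the common support; a mere count can remain fixed while the actual inversions change, so the naive variant is not order-reflecting (already $\bl{y_2}\rd{x_1}$ and $\bl{y_1}\rd{x_1}\bl{y_2}$ in $\Shuf(1,2)$ have equal counts yet are incomparable). Once $\phi$ is verified to be an order embedding, we get $\dim\Bub(m,n)\le m+n$ uniformly for all $m,n\ge 0$, and the conjecture follows. As a byproduct this realizes $\Bub(m,n)$ as the componentwise order on the integer tuples $\phi(\ubf)\in\mathbb{Z}^{m+n}$, answering Question~\ref{qu:vector_realization} with $d=m+n$ and extending Proposition~\ref{prop:bubble_hochschild}.
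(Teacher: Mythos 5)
You should be aware that the paper contains no proof of this statement: it is posed as a conjecture, the authors only establish the lower bound $\dim\Bub(m,n)\geq m+n$ (Corollary~\ref{cor:bubble_odim}) and leave the matching upper bound open via Question~\ref{qu:vector_realization}. So there is no argument of the paper to compare yours against; what you propose is precisely the missing half, and after checking it I believe your embedding works.

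Concretely, with $a_{s}(\ubf)$ the largest index of a letter of $\ybf$ present in $\ubf$ and preceding $x_{s}$ (value $0$ if none, $n+1$ if $x_{s}\notin\ubf$) and $b_{t}$ the indicator of $y_{t}\in\ubf$, order preservation follows from Lemma~\ref{lem:bubble_order} exactly as you outline: the only nontrivial case is $x_{s}\in\ubf\cap\vbf$ with $a_{s}(\ubf)=t^{*}\geq 1$, where the $\ybf$-subword condition puts $y_{t^{*}}$ into $\vbf$, so $(x_{s},y_{t^{*}})\in\invset(\ubf_{\vbf})\subseteq\invset(\vbf_{\ubf})$ forces $a_{s}(\vbf)\geq t^{*}$. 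For order reflection, the decisive point --- which you should state explicitly in a write-up --- is that in any shuffle word the present letters of $\ybf$ occur in increasing order of index, so the present $y$'s preceding a fixed $x_{s}$ in $\vbf$ are exactly those with index at most $a_{s}(\vbf)$; combined with $y_{t}\in\vbf$ (read off the $b$-coordinates) and $t\leq a_{s}(\ubf)\leq a_{s}(\vbf)$, this converts any $(x_{s},y_{t})\in\invset(\ubf_{\vbf})$ into an element of $\invset(\vbf_{\ubf})$, while the $a$- and $b$-coordinates recover the two subword conditions of Lemma~\ref{lem:bubble_order}. Your warning about the count variant is also correct (your $\Shuf(1,2)$ example does break it); recording the maximal index is exactly what makes the prefix argument work. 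Thus $\phi$ is an order embedding into a product of $m$ chains with $n+2$ elements and $n$ chains with $2$ elements, giving $\dim\Bub(m,n)\leq m+n$, and equality follows from Corollary~\ref{cor:bubble_odim}; as a byproduct you answer Question~\ref{qu:vector_realization} affirmatively with $d=m+n$, which the same corollary shows is minimal, extending Proposition~\ref{prop:bubble_hochschild}. Two minor points for the final version: note that injectivity of $\phi$ follows from order reflection together with antisymmetry (so it needs no separate check), and cite the equivalence between dimension and embeddings into products of chains where you use it.
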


\subsection{The Galois Graph of $\Bub(m,n)$}
	\label{sec:bubble_galois}
Any extremal lattice admits a neat representation in terms of certain set pairs coming from a directed graph.  More precisely, let $\Lattice=(L,\leq)$ be a finite extremal lattice of length $k$.  We may choose any maximal chain of length $k$ in $\Lattice$, say $C=\{c_{0},c_{1},\ldots,c_{k}\}$ to order the join- and meet-irreducible elements of $\Lattice$ as $\JI(\Lattice)=\{j_{1},j_{2},\ldots,j_{k}\}$ and $\MI(\Lattice)=\{m_{1},m_{2},\ldots,m_{k}\}$ such that
\begin{equation}\label{eq:extremal_order}
	j_{1}\vee j_{2}\vee\cdots\vee j_{s} = c_{s} = m_{s+1}\wedge m_{s+2}\wedge\cdots\wedge m_{k}
\end{equation}
for all $s\in[k]$.  If $s=k$, then the right side of \eqref{eq:extremal_order} is the meet over the empty set, which by default is the top element $c_{k}$ of $\Lattice$.  The \defn{Galois graph} of $\Lattice$ is the directed graph $\Galois(\Lattice)\defs\bigl([k],\leadsto\bigr)$, where $s\leadsto t$ if and only if $s\neq t$ and $j_{s}\not\leq m_{t}$.

Given any directed graph $(V,E)$, an \defn{orthogonal pair} is a pair $(A,B)$, where $A,B\subseteq V$, $A\cap B=\emptyset$ and $(A\times B)\cap E=\emptyset$.  In other words, an orthogonal pair is a pair of disjoint sets of vertices where no arrow goes from the first component to the second.  An orthogonal pair $(A,B)$ is \defn{maximal} if adding any element to $A$ or $B$ violates the orthogonality property.  

\begin{theorem}[\cite{markowsky:primes}*{Theorem~11}]\label{thm:galois_graph}
	If $\Lattice$ is a finite, extremal lattice, then $\Lattice$ is isomorphic to the lattice of maximal orthogonal pairs of $\Galois(\Lattice)$ ordered by inclusion of first components.
\end{theorem}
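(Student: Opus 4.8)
The plan is to construct mutually inverse, order‑preserving bijections between $L$ and the set of maximal orthogonal pairs of $\Galois(\Lattice)$, the guiding idea being that the first component of the pair assigned to $p\in L$ should record exactly which $j_s$ lie below $p$, and the second which $m_t$ lie above $p$. Two elementary facts would be set up first. (a) In any finite lattice every element $p$ satisfies $p=\bigvee\{j\in\JI(\Lattice)\mid j\le p\}=\bigwedge\{m\in\MI(\Lattice)\mid p\le m\}$; this is the standard observation that a non–meet‑irreducible element splits as a meet of two strictly larger elements, iterated. (b) For the chosen chain, $j_s\not\le m_s$ for every $s\in[k]$: from \eqref{eq:extremal_order} one gets $c_{s-1}=m_s\wedge c_s$ and $j_s\le c_s$, so $j_s\le m_s$ would force $j_s\le c_{s-1}$ and hence $c_s=j_1\vee\cdots\vee j_s\le c_{s-1}\lessdot c_s$, which is absurd.

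Next I would define, for $p\in L$,
\[
	\phi(p)\defs\bigl(A(p),B(p)\bigr),\qquad A(p)\defs\{s\in[k]\mid j_s\le p\},\quad B(p)\defs\{t\in[k]\mid p\le m_t\},
\]
and verify that $\phi(p)$ is a maximal orthogonal pair. Orthogonality: if $s\in A(p)$ and $t\in B(p)$ then $j_s\le p\le m_t$, so $s\not\leadsto t$; and $s\in A(p)\cap B(p)$ would give $j_s\le m_s$, contradicting (b), so $A(p)\cap B(p)=\emptyset$. Maximality: if $A(p)\cup\{s_0\}$ were still orthogonal to $B(p)$, then $j_{s_0}\le m_t$ for all $t\in B(p)$, whence $j_{s_0}\le\bigwedge_{t\in B(p)}m_t=p$ by (a), i.e.\ $s_0\in A(p)$; the dual argument rules out enlarging $B(p)$.

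In the other direction I would put $\psi(A,B)\defs\bigvee_{s\in A}j_s$. For a maximal orthogonal pair $(A,B)$, set $p=\psi(A,B)$: then $A\subseteq A(p)$ trivially, and $B\subseteq B(p)$ because $j_s\le m_t$ for all $s\in A$, $t\in B$ forces $p=\bigvee_{s\in A}j_s\le m_t$; since $\phi(p)$ is orthogonal and componentwise contains the maximal pair $(A,B)$, it must equal it, so $\phi\circ\psi=\mathrm{id}$ on maximal orthogonal pairs. Conversely $\psi(\phi(p))=\bigvee_{s\in A(p)}j_s=p$ by (a). Finally $p\le q$ clearly gives $A(p)\subseteq A(q)$, and $A(p)\subseteq A(q)$ gives $p=\bigvee_{s\in A(p)}j_s\le\bigvee_{s\in A(q)}j_s=q$; hence $\phi$ is an order‑isomorphism from $\Lattice$ onto the poset of maximal orthogonal pairs ordered by inclusion of first components (which is therefore a lattice, and genuinely a partial order since $p$ is recovered from $A(p)$).

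The only step with real content is $\phi\circ\psi=\mathrm{id}$, i.e.\ that \emph{every} maximal orthogonal pair arises as some $\phi(p)$: this is where the combinatorial maximality of $(A,B)$ must be translated into the closure identities $A=A(\psi(A,B))$ and $B=B(\psi(A,B))$, and it is exactly there that facts (a) and (b) are doing the work; the remaining verifications are routine unwindings of the definitions.
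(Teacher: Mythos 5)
Your argument is correct, but there is nothing in the paper to compare it with: Theorem~\ref{thm:galois_graph} is quoted from Markowsky and the paper supplies no proof of it, so what you have written is a self-contained proof of the cited result. It is essentially the standard Galois-connection (formal concept) argument: the maximal orthogonal pairs of $\Galois(\Lattice)$ are exactly the pairs $(A(p),B(p))$ recording which $j_s$ lie below and which $m_t$ lie above a given $p$, and your two preliminary facts are precisely what is needed --- (a) yields $\psi\circ\phi=\mathrm{id}$, the maximality of $\phi(p)$, and the order-reflection, while (b) is what makes $A(p)$ and $B(p)$ disjoint, i.e.\ it is the only place where the common indexing of $\JI(\Lattice)$ and $\MI(\Lattice)$ coming from the chain (and hence extremality) enters; the surjectivity step $\phi\circ\psi=\mathrm{id}$ is handled correctly via the implicit observation that componentwise subpairs of orthogonal pairs are again orthogonal, so a maximal pair contained in $\phi(\psi(A,B))$ must equal it. One boundary point deserves a line: for $s=1$ the identity $c_{0}=m_{1}\wedge c_{1}$ is not literally an instance of \eqref{eq:extremal_order}, which is stated only for $s\in[k]$; it does hold because $c_{0}=\hat{0}$ and the meet of all meet-irreducible elements is $\hat{0}$ by your fact (a) (equivalently: $j_{1}\leq m_{1}$ would put $j_{1}$ below every meet-irreducible, forcing $j_{1}=\hat{0}$, which is excluded for a join-irreducible). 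With that one-line patch the proof is complete, and it has the added virtue of making transparent that the representation uses extremality only through the existence of the indexing \eqref{eq:extremal_order} together with $j_{s}\not\leq m_{s}$.
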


Note that, a priori, the Galois graph depends on a choice of maximal chain.  Theorem~\ref{thm:galois_graph}, however, suggests that this is not the case.  Indeed, \cite{thomas:rowmotion}*{Proposition~2.5} states that different maximal chains of length $k$ yield isomorphic Galois graphs.

The next result reduces the amount of work constructing the Galois graph when $\Lattice$ is also semidistributive.

\begin{lemma}[\cite{muehle:noncrossing}*{Corollary~A.18(ii)}]\label{lem:galois_semidistributive}
	Let $\Lattice$ be a finite, semidistributive extremal lattice of length $k$. Suppose that $\JI(\Lattice)$ and $\MI(\Lattice)$ are ordered as in \eqref{eq:extremal_order} with respect to some maximal chain of length $k$.  For $s,t\in[k]$, it holds that $j_{s}\not\leq m_{t}$ if and only if $s\neq t$ and $j_{t}\leq{j_{t}}_{*}\vee j_{s}$.
\end{lemma}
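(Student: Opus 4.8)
The plan is to reduce the statement to the behaviour of the ``$\kappa$-map'' of a finite meet-semidistributive lattice. Recall that for each $j\in\JI(\Lattice)$ the set $\{x\in L\mid j_{*}\le x\text{ and }j\not\le x\}$ has a greatest element $\kappa(j)$, that $\kappa(j)\in\MI(\Lattice)$, and that $j\mapsto\kappa(j)$ is a bijection from $\JI(\Lattice)$ to $\MI(\Lattice)$; this is \cite{freese:free}*{Theorem~2.56} together with \cite{freese:free}*{Corollary~2.55}, both already invoked in the proof of Lemma~\ref{lem:semidistributive_odim}. I will use only three features of $\kappa$: that $j_{*}\le\kappa(j)$, that $j\not\le\kappa(j)$, and that $\kappa(j)$ is the \emph{largest} element $x$ with $j_{*}\le x$ and $j\not\le x$. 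The crux is the claim that, for the extremal ordering fixed by \eqref{eq:extremal_order}, one has $\kappa(j_{t})=m_{t}$ for every $t\in[k]$, and I expect this synchronisation to be the only genuine obstacle; the two directions of the asserted equivalence are then immediate.

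To prove $\kappa(j_{t})=m_{t}$, I would first record two monotonicity facts about the chosen maximal chain $c_{0}<c_{1}<\dots<c_{k}$ (which is saturated, since $\Lattice$ has length $k$). From $c_{b}=j_{1}\vee\dots\vee j_{b}$ we get $j_{a}\le c_{b}$ whenever $a\le b$, whereas $j_{a}\le c_{a-1}$ would force $c_{a}=c_{a-1}\vee j_{a}=c_{a-1}$; hence $j_{a}\le c_{b}$ if and only if $a\le b$. Dually, from $c_{b}=m_{b+1}\wedge\dots\wedge m_{k}$ and $c_{a-1}=m_{a}\wedge c_{a}$ one gets $m_{a}\ge c_{b}$ if and only if $b<a$. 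In particular $j_{s}\le c_{s}\le c_{t-1}\le m_{t}$ whenever $s<t$, so
\[
 j_{s}\not\le m_{t}\ \Longrightarrow\ s\ge t .
\]
Since $\kappa$ is a bijection $\JI(\Lattice)\to\MI(\Lattice)$ and both sets are indexed by $[k]$, there is a permutation $\pi$ of $[k]$ with $\kappa(j_{s})=m_{\pi(s)}$. From $j_{s}\not\le\kappa(j_{s})=m_{\pi(s)}$ and the displayed implication we get $\pi(s)\le s$ for all $s$; a permutation of $[k]$ that never increases the index must be the identity (compare $\sum_{s}\pi(s)$ with $\sum_{s}s$), so $\pi=\mathrm{id}$ and $\kappa(j_{t})=m_{t}$ for all $t$.

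Granting $m_{t}=\kappa(j_{t})$, the equivalence follows formally from maximality. If $j_{t}\le{j_{t}}_{*}\vee j_{s}$ and also $j_{s}\le m_{t}$, then since ${j_{t}}_{*}\le m_{t}$ we would get ${j_{t}}_{*}\vee j_{s}\le m_{t}$ and hence $j_{t}\le m_{t}$, contradicting $j_{t}\not\le\kappa(j_{t})$; so $j_{s}\not\le m_{t}$. Conversely, assume $j_{s}\not\le m_{t}$ and set $w={j_{t}}_{*}\vee j_{s}$. If $j_{t}\not\le w$, then $w$ lies in the set $\{x\mid{j_{t}}_{*}\le x,\ j_{t}\not\le x\}$, whose largest element is $\kappa(j_{t})=m_{t}$, forcing $j_{s}\le w\le m_{t}$, a contradiction; hence $j_{t}\le{j_{t}}_{*}\vee j_{s}$. (Both conditions hold trivially when $s=t$, since $j_{t}\not\le m_{t}$ and ${j_{t}}_{*}\vee j_{t}=j_{t}$; the clause $s\neq t$ in the statement is just what is relevant when reading off the Galois graph.) Neither implication uses extremality beyond the identification $m_{t}=\kappa(j_{t})$, so the whole argument hinges on that identification, and within it on making the index-monotone permutation step go through.
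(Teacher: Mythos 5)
Your proof is correct, and it differs from the paper in the most basic way possible: the paper offers no argument for this lemma at all, citing \cite{muehle:noncrossing}*{Corollary~A.18(ii)} and merely remarking that the proof given there for interval-constructable lattices extends verbatim to semidistributive extremal lattices. Your write-up makes the statement self-contained using only tools the paper already invokes elsewhere (the existence of $\kappa(j)$ for a finite meet-semidistributive lattice and the bijectivity of $\kappa\colon\JI(\Lattice)\to\MI(\Lattice)$, exactly the facts from \cite{freese:free} used in the proof of Lemma~\ref{lem:semidistributive_odim}). The genuine content is your synchronisation step $\kappa(j_{t})=m_{t}$, which you get from the triangularity of the extremal ordering ($j_{s}\leq c_{s}\leq c_{t-1}\leq m_{t}$ for $s<t$, read off from \eqref{eq:extremal_order}) together with the observation that a permutation of $[k]$ satisfying $\pi(s)\leq s$ for all $s$ is the identity; once $m_{t}=\kappa(j_{t})$ is known, both implications follow formally from the maximality property defining $\kappa$. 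I checked the details and see no gap: the instances of \eqref{eq:extremal_order} you actually need have indices in $[k]$, and $c_{0}$ being the bottom element is automatic for a maximal chain. Your parenthetical about $s=t$ is also a sharp and correct observation rather than a dodge: since $m_{t}=\kappa(j_{t})$ one always has $j_{t}\not\leq m_{t}$, so the displayed biconditional is literally valid only for $s\neq t$; your argument in fact yields the cleaner statement that $j_{s}\not\leq m_{t}$ holds if and only if $j_{t}\leq {j_{t}}_{*}\vee j_{s}$ for all $s,t$, with the clause $s\neq t$ belonging to the Galois-graph adjacency itself, which is precisely how the lemma is used in Proposition~\ref{prop:bubble_galois}. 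In short, your route buys a self-contained, elementary proof (and a slightly sharper formulation), whereas the paper's route buys brevity at the cost of deferring to an external appendix and an "extends verbatim" claim the reader must verify.
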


In fact, \cite{muehle:noncrossing}*{Corollary~A.18} is stated for interval-constructable lattices, but its proof uses only properties of semidistributive lattices and extends verbatim to the more general setting stated here.  As a consequence, we can compute the Galois graph of a finite semidistributive extremal lattice using only its join-irreducible elements, while the original construction uses both sets of irreducible elements.

We now describe the Galois graph of $\Bub(m,n)$.  Recall that $\Tcal=X\uplus Y\uplus(X\times Y)$ is the vertex set of the noncrossing matching complex $\Gamma_{m,n}$.

\begin{proposition}\label{prop:bubble_galois}
	For $m,n\geq 0$, the Galois graph of $\Bub(m,n)$ is the directed graph $(\Tcal,E)$, where $(\ell_{1},\ell_{2})\in E$ if and only if $\ell_{1}\neq\ell_{2}$ and either
	\begin{itemize}
		\item $\ell_{1}=(x_{s},y_{t})$ and $\ell_{2}=x_{s}$,\quad or
		\item $\ell_{1}=y_{t}$ and $\ell_{2}=(x_{s},y_{t})$,\quad or
		\item $\ell_{1}=(x_{s},y_{t})$ and $\ell_{2}=(x_{s'},y_{t'})$ with $s\geq s'$ and $t\leq t'$.
	\end{itemize}
\end{proposition}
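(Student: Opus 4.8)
The plan is to exploit that $\Bub(m,n)$ is extremal (Theorem~\ref{thm:bubble_extremal}) and semidistributive (Corollary~\ref{cor:bubble_semidistributive_trim}), so that Lemma~\ref{lem:galois_semidistributive} applies: the Galois graph is determined by the join-irreducibles alone, and---since, as remarked after Theorem~\ref{thm:galois_graph}, it does not depend on the choice of maximal chain of length $k=mn+m+n$---we may work with the chain built in the proof of Theorem~\ref{thm:bubble_extremal}. The identification of the vertex set with $\Tcal$ is the one furnished by the CU-labeling $\lambda$ of \eqref{eq:bubble_labeling}: by Lemma~\ref{lem:cu_labeling_perspective}, each vertex is the common $\lambda$-value of exactly one join-irreducible and exactly one meet-irreducible.

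Making this concrete, Lemma~\ref{lem:bubble_irreducibles} and the proof of Corollary~\ref{cor:bubble_irreducibles_poset} describe $\JI\bigl(\Bub(m,n)\bigr)$ as follows: the words $a_i\defs\xbf_{\hat\imath}$ ($i\in[m]$), with $(a_i)_{*}=\xbf$ and $\lambda$-value $x_i$; the words $b_t\defs\xbf y_t$ ($t\in[n]$), with $(b_t)_{*}=\xbf$ and $\lambda$-value $y_t$; and the words $c_{s,t}$ of support $X\cup\{y_t\}$ in which $y_t$ immediately precedes $x_s$, which have $\invset(c_{s,t})=\{(x_i,y_t)\mid s\le i\le m\}$, $\lambda$-value $(x_s,y_t)$, and $(c_{s,t})_{*}=c_{s+1,t}$ under the convention $c_{m+1,t}=b_t$. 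Hence the identification is $a_i\leftrightarrow x_i$, $b_t\leftrightarrow y_t$, $c_{s,t}\leftrightarrow(x_s,y_t)$. By Lemma~\ref{lem:galois_semidistributive} the problem becomes: for distinct join-irreducibles $j$ and $j'$, decide whether $j'\le j'_{*}\vee j$. One computes $j'_{*}\vee j$ by the $\ybf$-filling formula of Lemma~\ref{lem:bubble_joins_fill} and tests the inequality via Lemma~\ref{lem:bubble_order}, running over the nine type-combinations of $(j,j')$. When $j'$ is one of the atoms $a_i$ or $b_t$ we have $j'_{*}=\hat{0}$, so $j'_{*}\vee j=j$, and one checks that $a_i\le j$ holds for no $j\neq a_i$ while $b_t\le j$ holds exactly for $j=c_{s,t}$ ($s\in[m]$). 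When $j'=c_{a,b}$, so $j'_{*}=c_{a+1,b}$, computing the relevant joins gives: $c_{a,b}\le c_{a+1,b}\vee a_i$ iff $i=a$; $c_{a,b}\not\le c_{a+1,b}\vee b_u$ for every $u$; and $c_{a,b}\le c_{a+1,b}\vee c_{a',b'}$ exactly when $a'\le a$, $b'\ge b$, and $(a',b')\neq(a,b)$. Translating these through the identification---using the explicit description of $j'_{*}$ and the convention $c_{m+1,t}=b_t$ to determine the orientation of each arrow---produces precisely the three families of edges in the statement.

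Almost everything here is routine bookkeeping with restrictions and inversion sets. The one delicate point is the join $c_{a+1,b}\vee c_{a',b'}$ of two $XY$-type join-irreducibles: one has to form the $\ybf$-fillings of both words---keeping track of where the single $\ybf$-letters $y_b$ and $y_{b'}$ get interleaved among the $x$'s, which depends on whether $b=b'$ and, if not, on whether $b<b'$---then restrict to the common word $\tilde\wbf$ of Lemma~\ref{lem:bubble_joins_fill} and read off which inversions $(x_i,y_b)$ survive in the union; the presence of the inversion $(x_a,y_b)$ is exactly what forces $a'\le a$ and $b'\ge b$. (A variant that avoids these joins is to describe the meet-irreducibles explicitly via Lemma~\ref{lem:bubble_duality}---they are the words $\ybf_{\hat\jmath}$ together with the words of support $Y\cup\{x_s\}$ in which $x_s$ immediately precedes $y_t$---and to decide the relations $j_s\not\le m_t$ directly with Lemma~\ref{lem:bubble_order}; the case analysis splits in the same way.)
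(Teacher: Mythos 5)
Your proposal follows essentially the same route as the paper's proof: invoke Lemma~\ref{lem:galois_semidistributive}, list the join-irreducibles together with their unique lower covers (your $a_i,b_t,c_{s,t}$ are the paper's $\bbf^{(i)},\abf^{(m,t)},\abf^{(s-1,t)}$), compute the joins $j'_{*}\vee j$ via the $\ybf$-filling formula of Lemma~\ref{lem:bubble_joins_fill}, and translate through the labeling \eqref{eq:bubble_labeling}; your case-by-case conditions coincide with those derived in the paper. The only point you leave implicit---exactly as the paper does---is the final bookkeeping that converts the condition $j'\leq j'_{*}\vee j$ into the direction of the corresponding arrow of $\Galois\bigl(\Bub(m,n)\bigr)$.
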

\begin{proof}
	By Lemma~\ref{lem:bubble_irreducibles}, there are essentially two types of join-irreducible elements.  We define
	\begin{align*}
		\abf^{(s,j)} & \defs x_{1}x_{2}\cdots x_{s}y_{j}x_{s+1}\cdots x_{m}, && \text{for}\;0\leq s\leq m,\;1\leq j\leq n\\
		\bbf^{(s)} & \defs x_{1}x_{2}\cdots\hat{x_{s}}\cdots x_{n}, && \text{for}\;1\leq s\leq n.
	\end{align*}
	It is immediately clear that
	\begin{displaymath}
		\bbf^{(s)}_{*} = \xbf \quad \text{and} \quad \abf^{(s,j)}_{*} = \begin{cases}\abf^{(s+1,j)}, & \text{if}\;s<m,\\\xbf, & \text{if}\;s=m.\end{cases}
	\end{displaymath}
	and $\abf^{(s,j)}<\abf^{(t,j)}$ whenever $s>t$.  See also Corollary~\ref{cor:bubble_irreducibles_poset}.

	Now, consider two join-irreducibles $\ubf$ and $\vbf$.  We want to understand the cases, when $\ubf\bubleq\ubf_{*}\vee\vbf$.
	
	(i) If $\ubf=\bbf^{(i)}$, then $\ubf_{*}\vee\vbf=\vbf$.  Corollary~\ref{cor:bubble_irreducibles_poset} now tells us that $\ubf\bubleq\vbf$ if and only if $\ubf=\vbf$.
% 	and we get that $\ubf\bubleq\vbf$ if and only if $\ubf=\vbf$, because if $\vbf=\bbf^{(j)}$, then $\ubf$ and $\vbf$ consist of exactly $m-1$ letters of $\xbf$ and if $\vbf=\abf^{(s,j)}$, then $\vbf_{\xbf}=\xbf$.

	(ii) If $\ubf=\abf^{(m,i)}$, then $\ubf_{*}\vee\vbf=\vbf$.  Corollary~\ref{cor:bubble_irreducibles_poset} now tells us that $\ubf\bubleq\vbf$ if and only if $\vbf=\abf^{(s,i)}$.
% 	Since $\ubf_{\vbf}=\xbf$, we observe that necessarily $\vbf=\abf^{(s,i)}$ for some $s\leq m$.  Indeed, if $\vbf=\bbf^{(j)}$, then $\vbf$ lacks some letter of $\xbf$ and thus cannot be above $\ubf$.  If $\vbf=\abf^{(s,j)}$ for $j\neq i$, then $\vbf_{\ybf}=y_{j}$ and $\ubf_{\ybf}=y_{i}$ and we get $\ubf\not\bubleq\vbf$.

	(iii) If $\ubf=\abf^{(s,i)}$ for $s<m$, then $\ubf_{*}=\abf^{(s+1,i)}$.  Consequently, $(x_{s+1},y_{i})\in\invset(\ubf)\setminus\invset(\ubf_{*})$.  Let $\wbf=\ubf_{*}\vee\vbf$.

	(iiia) Suppose first that $\vbf=\abf^{(t,j)}$.  Lemma~\ref{lem:bubble_joins_fill} then tells us that that $\vbf$ contributes an inversion $(x_{t+1},y_{j})$ to $\wbf$ and $\ubf_{*}$ contributes an inversion $(x_{s+2},y_{i})$ to $\wbf$.  If $i\leq j$, then we also have an inversion $(x_{t+1},y_{i})$ in $\wbf$.  For $\ubf\bubleq\wbf$ to hold it must necessarily be that $(x_{s+1},y_{i})$ is an inversion of $\wbf$, which can only be achieved if $t\leq s$.  If $i>j$, then $y_{i}$ comes after $y_{j}$ in $\wbf$ and there is no way that $(x_{s+1},y_{i})$ is an inversion of $\wbf$.

    (iiib) Now suppose that $\vbf=\bbf^{(j)}$.  In view of Lemma~\ref{lem:bubble_joins_fill}, we get that $\wbf_{\xbf}=X\setminus\{x_{j}\}$.  In particular, $\wbf$ does not have more inversions than $\ubf_{*}$.  This means that we have $\ubf\not\bubleq\wbf$ except when $j=s+1$, because then the desired inversion of $(x_{s+1},y_{i})$ is not relevant as $x_{s+1}$ is not in the support of $\wbf$.  Thus, we have have $\ubf\bubleq\wbf$ if and only if $j=s+1$.

    \medskip

    In summary, we have the following relations among the join-irreducible elements of $\Bub(m,n)$:
    \begin{itemize}
        \item no arrows leave $\bbf^{(s)}$ and an arrow enters $\bbf^{(s)}$ if and only if it comes from $\abf^{(s-1,j)}$;
        \item no arrow enters $\abf^{(m,i)}$ and $\abf^{(m,i)}$ has an arrow to every $\abf^{(s,i)}$ for $s<m$;
        \item there is an arrow from $\abf^{(s,i)}$ to $\abf^{(t,j)}$ if and only if $i\leq j$ and $s\geq t$, except when $i=j$ and $s=t$.
    \end{itemize}
    In view of \eqref{eq:bubble_labeling}, we get
    \begin{displaymath}
        \lambda(\ubf_{*},\ubf) = \begin{cases}x_{s} , & \text{if}\;\ubf=\bbf^{(s)},\\y_{i}, & \text{if}\;\ubf=\abf^{(m,i)},\\(x_{s+1},y_{i}) , & \textbf{if}\;\ubf=\abf^{(s,i)},s<m.\end{cases}
    \end{displaymath}
    This gives the encoding in the statement.
\end{proof}

\begin{figure}
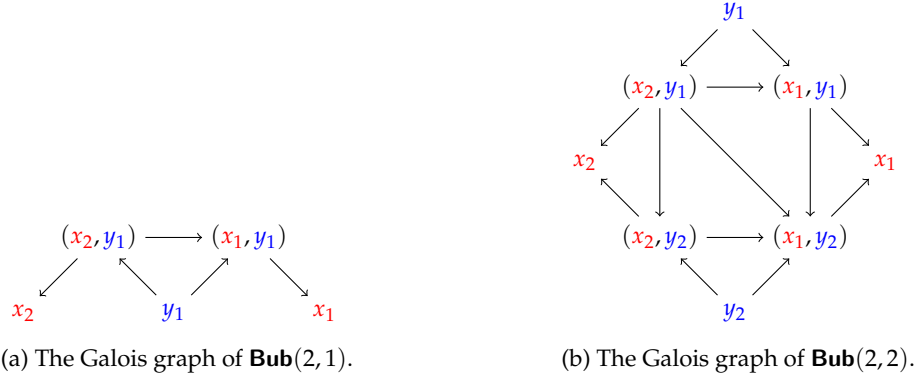

	\centering
	\begin{subfigure}[t]{.45\textwidth}
		\centering
		\includegraphics[page=11,scale=1]{shuffle_figures.pdf}
		\caption{The Galois graph of $\Bub(2,1)$.}
		\label{fig:bubble_galois_21}
	\end{subfigure}
	\hspace*{1cm}
	\begin{subfigure}[t]{.45\textwidth}
		\centering
		\includegraphics[page=12,scale=1]{shuffle_figures.pdf}
		\caption{The Galois graph of $\Bub(2,2)$.}
		\label{fig:bubble_galois_22}
	\end{subfigure}
	\caption{Some Galois graphs of bubble lattices.}
	\label{fig:bubble_galois}
\end{figure}

% \begin{remark}
% 	Since $\Bub(m,n)$ is semidistributive and extremal, it follows from \cite{thomas:rowmotion}*{Corollary~6.7} that the complement of $\Galois\bigl(\Bub(m,n)\bigr)$ is the $1$-skeleton of the canonical join complex of $\Bub(m,n)$.  Using Proposition~\ref{prop:bubble_canonical_complex} and Conditions~\eqref{it:nm_1} and \eqref{it:nm_2}, we may immediately determine which elements of $\Tcal$ are adjacent in $\Galois\bigl(\Bub(m,n)\bigr)$.  It only remains to determine the direction of these edges.
% \end{remark}
% 

\bibliographystyle{plain}
\bibliography{bib_shuffle}

\end{document}